\newcounter{mynote}
\newtheorem{thm}        {Theorem}     [section]
\newtheorem{lemma}  [thm]  {Lemma}
\newtheorem{cor}    [thm]  {Corollary}
\newtheorem{theorem}      [thm]   {Theorem}
\newtheorem{corollary}    [thm]  {Corollary}
\newtheorem{conjecture}   [thm]  {Conjecture}
\theoremstyle{remark}
\newtheorem*{remark}{Remark}
\newtheorem{problem}{Problem}
\def\Z{{\mathbb Z}}
\def\PP1{{\mathb P}^1}
\DeclareMathOperator\aut{Aut}
\DeclareMathOperator\PSL{PSL}
\DeclareMathOperator\PGL{PGL}
\DeclareMathOperator\SL{SL}
\DeclareMathOperator\GL{GL}
\DeclareMathOperator\Spec{Spec}
\DeclareMathOperator\Oo{O}
\renewcommand\b\bar
\newcommand\Cp{\mathcal{C}_p}
\newcommand\Sz{\mathrm{Sz}}
\newcommand\Ree{\mathrm{Re}}
\newcommand\PSU{\mathrm{PSU}}
\begin{document}

\title[Global Oort Groups]
{Global Oort Groups}
\author{Ted Chinburg}
\address{Department of Mathematics,  University of Pennsylvania,
Philadelphia, PA 19104-6395, USA}
\email{ted@math.upenn.edu}
\author{Robert Guralnick}
\address{Department of Mathematics,  University of Southern
California,  Los Angeles,  CA 90089-2532,  USA}
\email{guralnic@usc.edu}
\author{David Harbater}
\address{Department of Mathematics,  University of Pennsylvania,
Philadelphia PA 19104-6395, USA}
\email{harbater@math.upenn.edu}

\thanks{The authors were supported by NSF FRG grant DMS-1265290.
The first author was also supported by NSF FRG DMS-1360767, 
NSF SaTC grant CNS-1513671 and Simons Foundation Grant 338379.
The second author was also supported by NSF DMS-1265297 and NSF DMS-1302886.
The third author was also supported by NSA grant H98230-14-1-0145 
and NSF FRG grant DMS-1463733.}

\subjclass[2010]{Primary 20B25, 12F10,  14H37;
Secondary l3B05,  14D15,  14H30}

\keywords{Curves,  covers, automorphisms,  Galois groups,  characteristic $p$,
lifting,  Oort conjecture, simple group, almost simple, normal complement}

\date{December 30, 2015}

\begin{abstract}
We study the Oort groups for a prime $p$, i.e.\ finite groups $G$
such that every $G$-Galois branched cover of smooth curves over
an algebraically closed field of characteristic $p$
lifts to a $G$-cover of curves in characteristic $0$. 
We prove that all Oort groups lie in a particular class of finite groups
that we characterize, with equality of classes under a conjecture about local 
liftings.  We prove this equality unconditionally if the order of $G$ 
is not divisible by $2p^2$.  We also treat the local lifting problem 
and relate it to the global problem.
\end{abstract}

\maketitle

\section{Introduction} \label{intro sect}

This paper, which is a sequel to \cite{cgh1} and \cite{cgh2}, concerns the question of when covers of curves in characteristic $p>0$ lift to covers in characteristic zero.
We begin with a $G$-Galois finite branched cover $X \to Y$ of smooth projective curves over an algebraically closed field $k$ of characteristic $p$.
By a {\em lifting} we mean 
a $G$-Galois branched cover $\mathcal X \to \mathcal Y$ of normal projective curves over a complete discrete valuation ring $R$ such that the following is true.  The fraction field of $R$ has characteristic zero, the residue field of $R$ is isomorphic to $k$, and the closed fiber of $\mathcal X \to \mathcal Y$ is $G$-isomorphic to the given cover $X \to Y$.  Lifting a $G$-Galois cover $X \to Y$ is equivalent to lifting the action of $G$ on $X$ to an action of $G$ on $\mathcal X$.

In \cite{gro}, Grothendieck showed that all tamely ramified covers lift, and in particular a cover lifts if the Galois group has order prime to $p$.
Later, in \cite[Sect.~I.7]{oort}, F.~Oort posed the following conjecture:

\begin{conjecture} \label{Oort conj}
(Oort Conjecture)
Every cyclic Galois cover of $k$-curves lifts to characteristic zero.
\end{conjecture}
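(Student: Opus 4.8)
The plan is to reduce the global statement to a purely local one, and then to solve that local problem for cyclic $p$-groups by induction on the power of $p$. First I would invoke the local-global principle for lifting. Since the inertia group at each branch point of a cyclic cover is itself cyclic, and since the obstruction to deforming a $G$-action is concentrated in the completed local rings at the ramification points, a $G$-Galois cover $X \to Y$ lifts to characteristic zero if and only if each local action of the inertia group on $k[[t]]$ lifts. This reduction, carried out via formal patching and deformation theory, replaces Conjecture~\ref{Oort conj} by the \emph{local} Oort problem: every action of a cyclic group on $k[[t]]$ by continuous $k$-automorphisms lifts to an action on $R[[T]]$ for a suitable complete discrete valuation ring $R$ of mixed characteristic.

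Next I would strip off the tame part. By Grothendieck's theorem every tame (prime-to-$p$) cover lifts, so writing a cyclic group as $\Z/p^n \times \Z/m$ with $p \nmid m$ reduces the local problem to the wild cyclic $p$-group $\Z/p^n$. The cases $n=0,1$ are classical (the latter via the Oort--Sekiguchi--Suwa construction of $\Z/p$-covers), and $n=2$ is known, so the heart of the matter is the inductive step for $n \ge 3$.

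For the induction I would set up a Hurwitz tree recording how the branch locus of the desired lift degenerates in the closed fiber, with each edge carrying the differential and conductor data of an order-$p$ layer of the cover. Assuming inductively a lift of the $\Z/p^{n-1}$-quotient action, the goal is to prescribe the ramification (the different) of the remaining $\Z/p$-layer along the tree and then to realize it: on each boundary annulus one constructs a compatible $\Z/p$-extension with the required conductor and glues the pieces together. Realizing this local data amounts to solving a differential equation whose solvability is governed by the Oort--Sekiguchi--Suwa theory together with a careful analysis of the Cartier operator and the reduction of differentials.

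The main obstacle is precisely this realization step for $n \ge 3$. One must show that the ramification invariants forced by the tree can actually be achieved---equivalently, that the relevant deformation space is nonempty---and that the locally constructed differentials can be matched consistently across the tree with no residual global obstruction. Controlling the interaction between the different of the $\Z/p^{n-1}$-subcover and the new $\Z/p$-layer, and proving that the resulting system of equations is always solvable, is where the real work lies; it is exactly this step that separates the full conjecture from the low-order cases already in the literature.
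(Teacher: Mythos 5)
First, a point of context: the paper does not prove this statement. It is stated as a conjecture and its resolution is attributed to the combined work of Obus--Wewers \cite{ow} and Pop \cite{pop}; the paper only uses the result as an input (together with the local-global principle of \cite[Theorem~2.4]{cgh1}, which is the reduction you describe in your first paragraph). So there is no internal proof to compare against, and the relevant question is whether your argument would itself establish the conjecture.

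It would not, because the decisive step is left as an acknowledged gap. Your reduction to the local problem, the stripping of the tame part, and the disposal of the cases $n \le 2$ via \cite{OSS} and \cite{GM} are all correct and standard. But your final paragraph explicitly concedes that the realization step for $n \ge 3$ --- showing that the ramification data prescribed by the Hurwitz tree can always be achieved and glued without obstruction --- is ``where the real work lies.'' That step \emph{is} the theorem; a plan that identifies it as the main obstacle without resolving it is not a proof. Moreover, the sketch omits the idea that actually closed this gap in the literature: Obus and Wewers proved local lifting only for cyclic covers whose higher ramification jumps satisfy a certain condition (``no essential ramification''), and Pop's contribution was an equicharacteristic-$p$ deformation argument showing that an arbitrary local cyclic action can be deformed to one satisfying that condition, together with the fact that local liftability is preserved under such deformations. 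Your induction on $n$ via Hurwitz trees has no substitute for this deformation step, and a direct induction of the kind you describe is exactly what had resisted proof before \cite{pop}. To make this rigorous you would need either to supply the solvability argument for the differential equations governing the top $\Z/p$-layer in full generality, or to incorporate a reduction to a tractable special case along the lines of Pop's deformation.
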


Motivated by this conjecture, in \cite{cgh1} we defined a finite group $G$ to be a (global) {\em Oort group} with respect to $k$ if every $G$-Galois cover of smooth projective $k$-curves lifts to characteristic zero. We asked which finite groups are Oort groups.
Recently, Conjecture~\ref{Oort conj} was proven by the combined work of  Obus and Wewers \cite{ow} and Pop \cite{pop},  so we now know that every cyclic group is an Oort group.  A brief history is as follows:
In \cite{OSS} and later in \cite{GM}, it was shown that a cyclic group $G$ is an Oort group provided that its order is exactly divisible by $p$, respectively by $p^2$.
More recently, in \cite{ow}, it was shown that a cyclic group $G$ is an Oort group provided that its order is divisible at most by $p^3$, or more generally if its higher ramification groups satisfy a certain condition.
Finally, by deforming a general cover to one in the special case treated in \cite{ow}, the full conjecture was proven in \cite{pop}.

While the above conjecture considered only cyclic groups, some results have been obtained about more general groups.
Apart from Grothendieck's result on prime-to-$p$ groups being Oort groups, it is known that the dihedral group $D_{2p}$ of order $2p$ is an Oort group in characteristic $p$ (see \cite{pagot} for $p=2$, \cite{BW} for $p$ odd), and
$A_4$ is an Oort group in characteristic $2$ (see \cite{BW}).  Note that these groups are cyclic-by-$p$, i.e. they are extensions of a cyclic group of order prime to $p$ by a normal $p$-Sylow subgroup.
The significance of understanding Oort groups of this form is that a group $G$ is an Oort group if and only if each cyclic-by-$p$ subgroup of $G$ is an Oort group \cite[Corollary~2.8]{cgh1}.

In \cite[Corollary~3.4, Theorem~4.5]{cgh1}, we proved that if a cyclic-by-$p$ group $G$ is an Oort group for an algebraically closed field of characteristic $p$, then $G$ is either a cyclic group $C_m$ of order $m$, a dihedral group $D_{2p^n}$ for some $n$, or $A_4$ if $p=2$.  We also conjectured
the converse in \cite{cgh1}:

\begin{conjecture} \label{strong Oort conj}
(Strong Oort Conjecture)
If $k$ is an algebraically closed field of characteristic $p$,
then a cyclic-by-$p$ group $G$ is an Oort group for $k$ if and only if $G$ is of the form
$C_m$, $D_{2p^n}$, or $A_4$ (the last case only if $p=2$).
\end{conjecture}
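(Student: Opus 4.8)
The statement is a biconditional, and its forward implication is already available: by \cite[Corollary~2.8]{cgh1} a group is an Oort group exactly when all of its cyclic-by-$p$ subgroups are, and by \cite[Corollary~3.4, Theorem~4.5]{cgh1} any cyclic-by-$p$ Oort group is forced to be one of $C_m$, $D_{2p^n}$, or $A_4$ (the last only when $p=2$). So I would put all of the effort into the converse, namely that each of these three families actually consists of Oort groups.

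The first move is to convert the global lifting problem into a local one. By the local--global principle for lifting (due to Garuti and to Green--Matignon, in the deformation-theoretic framework of Bertin--M\'ezard), a $G$-Galois cover $X \to Y$ lifts to characteristic zero if and only if, at each branch point, the action of the inertia group on the complete local ring $k[[t]]$ lifts. Because the inertia group of a cyclic-by-$p$ cover is again cyclic-by-$p$, and because the Harbater--Katz--Gabber construction realizes every local action of a cyclic-by-$p$ group as the inertia of a suitable cover of $\Pone$, a cyclic-by-$p$ group $G$ is a (global) Oort group precisely when $G$ and all of its cyclic-by-$p$ subgroups are \emph{local} Oort groups. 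Since each of the three families in the statement is closed under passage to cyclic-by-$p$ subgroups --- for example, the Sylow $2$-subgroup $C_2 \times C_2$ of $A_4$ is the dihedral group $D_4 = D_{2 \cdot 2^1}$, and every cyclic or dihedral subgroup of $D_{2p^n}$ again lies in one of the three families --- it suffices to prove that $C_m$, $D_{2p^n}$, and $A_4$ (for $p=2$) are local Oort groups.

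Two of these are already settled, and I would simply invoke them: for $G = C_m$ the local, hence global, liftability is the Oort Conjecture, now a theorem by \cite{ow} and \cite{pop}; and for $G = A_4$ with $p = 2$ it is the main result of \cite{BW}. This isolates the dihedral family $D_{2p^n}$ as the entire remaining content.

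The main obstacle, which I expect to be the genuinely hard part, is the local lifting problem for $D_{2p^n}$ with $n \ge 2$; the case $n = 1$ is \cite{pagot, BW}. A local $D_{2p^n}$-action on $k[[t]]$ restricts to a faithful $C_{p^n}$-action together with an involution $\sigma$ inverting $C_{p^n}$, and the cyclic part lifts by \cite{ow, pop}. The real difficulty is to choose a lift that is simultaneously \emph{equivariant} for $\sigma$. My plan would be to run Pop's degeneration argument, deforming the given action to one whose ramification filtration lies in the range handled by Obus--Wewers, but to carry the inversion symmetry through every stage: the degeneration of the branch divisor, the associated Hurwitz tree and its differential data, and the resulting characteristic-zero model must all be chosen $\sigma$-invariant. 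The crux is then to show that the obstruction to solving the relevant deformation problem compatibly with $\sigma$ --- an equivariant class in the $H^2$ of the lifting complex, or equivalently the obstruction to an inversion-symmetric solution of the associated differential equation --- vanishes. Exactly this compatibility is what the purely cyclic theory never addresses, and controlling it uniformly as $n$ grows is the heart of the problem.
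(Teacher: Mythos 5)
There is a genuine gap, and it is worth being precise about where it sits relative to the paper. The statement you are proving is stated in the paper as a \emph{conjecture}, not a theorem: the paper does not prove it. What the paper does prove is exactly the reduction you carry out. Your forward direction (via \cite[Corollary~2.8]{cgh1} and \cite[Corollary~3.4, Theorem~4.5]{cgh1}), your local--global reduction (the paper's version is \cite[Theorem~2.4]{cgh1} together with Theorem~\ref{local global equiv}), and your disposal of $C_m$ (by \cite{ow}, \cite{pop}), $A_4$ (by \cite{BW}), and $D_{2p}$ (by \cite{pagot}, \cite{BW}) all match the paper's own argument for Theorem~\ref{dihedral implies SOC} and Corollary~\ref{Oort O implications}. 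Those results say precisely that the Strong Oort Conjecture is \emph{equivalent} to the assertion that $D_{2p^n}$ is a (local) Oort group for every $n$, and they prove the conjecture unconditionally only when $2p^2$ does not divide $|G|$. So everything you have actually argued reproduces the paper's partial result, not the conjecture itself.

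The missing content is the final paragraph of your proposal: the local lifting problem for $D_{2p^n}$ with $n \ge 2$. There you do not give a proof; you describe a research program (``run Pop's degeneration argument \dots carrying the inversion symmetry through every stage'') and then assert that ``the crux is to show that the obstruction \dots vanishes'' without showing it. That vanishing is the entire open problem. It is not a routine equivariance check: the Obus--Wewers analysis of cyclic covers rests on explicit Kummer--Artin--Schreier--Witt computations whose compatibility with the extra involution is exactly what is unknown, and Pop's deformation argument produces characteristic-zero models with no control over the action of the inverting involution. Moreover, the obstruction is genuinely capable of being nonzero for closely related cyclic-by-$p$ groups --- the paper notes (in the proof of Theorem~\ref{local global equiv}) that generalized quaternion and semidihedral $2$-groups are \emph{not} local Oort groups, by \cite{bw} and \cite{cgh2} --- so one cannot expect the equivariant obstruction to vanish for soft or formal reasons. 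As written, your argument establishes the conjecture only in the cases already covered by Theorem~\ref{dihedral implies SOC}(\ref{SOC special case}), and leaves the conjecture itself unproven.
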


Motivated by this, we define a group $G$ to be an {\em $\Oo$-group} for the prime $p$ if every cyclic-by-$p$ subgroup of $G$ is of the form $C_m$, $D_{2p^n}$, or $A_4$ (if $p=2$, in this last case).  
By \cite[Corollaries~3.5 and~4.6]{cgh1}, every Oort group for an algebraically closed field $k$ of characteristic $p$ is an $\Oo$-group for $p$.   

In this paper we consider the converse to this last assertion, i.e.\ whether every $\Oo$-group is an Oort group.  We show that this implication would follow from Conjecture~\ref{strong Oort conj}, 
and that it holds
unconditionally for groups whose orders are not divisible by $2p^2$ (see 
Theorem~\ref{dihedral implies SOC} and Corollary~\ref{Oort O implications}).
We also give a classification of $\Oo$-groups, in Theorems~\ref{main p odd}, \ref{Oort 2 Sylow},
and~\ref{main2}, using finite group theory, including the classification theorem of finite simple groups.  If Conjecture~\ref{strong Oort conj} is later proven to hold in general, this would fully classify Oort groups for $k$ and show that the set of Oort groups depends only on $p$ (as has been conjectured).
Unconditionally, our classification of $\Oo$-groups restricts the set of finite groups that can be Oort groups in characteristic $p$.

We also consider the companion notation of ``local Oort groups'', i.e.\
groups for which every local cover lifts.  More precisely, $G$ is a {\em local Oort group} for $k$ if every connected normal $G$-Galois branched cover of $\Spec(k[[t]])$ lifts to such a cover of $\Spec(R[[t]])$, for some complete discrete valuation ring $R$ whose fraction field has characteristic zero and whose residue field is $k$.  Since Galois groups over $k((t))$ are all cyclic-by-$p$, so are all local Oort groups for $k$.  In \cite[Theorem~2.4]{cgh1}, it was shown that $G$ is a global Oort group for $k$ if and only if every cyclic-by-$p$ subgroup of $G$ is a local Oort group for $k$; and every cyclic-by-$p$ Oort group for $k$ is a local Oort group for $k$ \cite[Corollary~2.6]{cgh1}.  In Theorem~\ref{local global equiv} of the present paper we prove the converse of this last assertion; thus local Oort groups are the same as cyclic-by-$p$ global Oort groups.  Hence local Oort groups are preserved under taking subquotients; the corresponding property of {\em global} Oort groups was shown in \cite[Corollary~2.7]{cgh1}.  In this way, Conjecture~\ref{strong Oort conj} can  be regarded as a conjecture about local liftings.

\smallskip

The paper is organized as follows.  In Section~\ref{main results sect},
we state the main results about $\Oo$-groups, global Oort groups, and local Oort groups.
The proof of the classification of $\Oo$-groups in odd characteristic $p$ is given in Section~\ref{odd char sect}, and the proof in characteristic $2$ is given in Section~\ref{char 2 sect}.
We remark that the proof for
odd $p$ requires the classification of finite simple groups.
The proof for $p=2$ requires only an older result about
the classification of finite simple groups with a dihedral Sylow
$2$-subgroup and the Feit-Thompson theorem.

\smallskip

\noindent \textbf{Notation and Terminology}

\smallskip

In this paper, $k$ is an algebraically closed field of characteristic $p \ne 0$.  {\em Curves} over $k$ are smooth, connected projective $k$-schemes of dimension one.  For a discrete valuation ring $R$, an {\em $R$-curve} is a separated flat projective $R$-scheme whose fibers are curves.  A {\em $G$-Galois cover} is a finite generically separable morphism of connected normal schemes $X \to Y$ together with a faithful action of $G$ on $X$ such that $Y = X/G$.

Given a group $G$ with subgroups $N,H$, we write $G=NH$ if 
$H$ normalizes $N$ and $G$ is the set of products $nh$ with $n\in N$ and $h \in H$.  This is a semi-direct product if and only if $N \cap H = 1$.  We write $C_m$ for the cyclic group of order $m$, and $D_{2n}$ for the dihedral group of order $2n$.

We have from \cite{asch} some basic notions and results about finite groups.  See also \cite[2.5]{gls3} as a general  reference about automorphism groups
of simple groups, e.g.\ in connection with Theorem~\ref{simple-odd}.

A {\em section} (or {\em subguotient}) of a group $G$ is a group $H/K$ where $K$ is normal
in $H$ and $H \le G$. A {\em chief factor} of a group $G$
is a non-trivial section $H/K$ where $H,K$ are both normal in $G$
and there are no normal subgroups of $G$ properly between
$H$ and $K$.  
If moreover $H$ is contained in some subgroup $E \subset G$, we also say that $H/K$ is a 
$G$-{\em chief factor} of $E$. 

A subgroup $H$ of $G$ is {\it subnormal} in $G$ if there is a finite chain
of subgroups $H = H_0 \subset H_1 \cdots \subset H_n = G$ such that $H_i$ is normal in $H_{i+1}$
for all $i$.  The {\em automizer} of a subgroup $H \le G$ is $N_G(H)/C_G(H)$, where
$N_G(H)$, $C_G(H)$ denote the normalizer and centralizer of $H$ in $G$. Let $Z(G)$ be the
center of $G$.

Given a set of primes $\pi$, write $\pi'$ for the complement of $\pi$.
If $G$ is a finite group,  let $\pi(G)$ denote the set of
prime divisors of the order $|G|$ of $G$.   We say $G$ is a {\em $\pi$-group} if
$\pi(G)  \subseteq \pi$.
Let $O_{\pi}(G)$ be the largest normal $\pi$-subgroup of $G$.
We write
$O_p(G)$ for $O_{\{p\}}(G)$ and write
$O(G)$ for $O_{2'}(G)$.
If $P$ is a $p$-group,  then $\Omega_1(P)$ is the subgroup
of $P$ generated by all elements of order $p$.

A group $G$ is {\em perfect} if it is equal to its commutator subgroup $[G,G]$; and it
is {\em quasisimple} if it is perfect and $G/Z(G)$ is simple.
A {\em component} of a finite group is a subnormal quasisimple subgroup.
It is known that any two distinct components commute.  We write $E(G)$ for the subgroup of
$G$ generated by all components.

The {\em Fitting subgroup},  $F(G)$,  is the largest normal nilpotent
subgroup of $G$.  Equivalently, $F(G)$ is the direct product of
$O_p(G)$, as $p$  ranges over $\pi(G)$.
The  {\em generalized Fitting subgroup} of $G$ is the group
$F^*(G):=E(G)F(G)$.
This group satisfies
the important property that $C_G(F^*(G))=Z(F(G))$.
A group $G$ is called {\em almost simple} if $F^*(G)$ is a non-abelian
simple group  (or equivalently,  $S \le G \le \aut(S)$ where
$S$ is a non-abelian simple group).

\section{Main Results} \label{main results sect}

We fix an algebraically closed field of characteristic $p$, and consider Oort groups for $k$.
Cyclic-by-$p$ Oort groups are local Oort groups by \cite{cgh1}, 
and Oort groups are $\Oo$-groups by \cite[Corollaries~3.5 and~4.6]{cgh1}.
Using the (now-proven) Oort Conjecture together with additional results (especially from \cite{cgh1}), we prove the converse of the first fact and a partial converse of the second.
We also give an explicit description of $\Oo$-groups; this restricts the forms that Oort groups can have, and would classify those groups if it is shown that Oort groups and $\Oo$-groups are the same in general.

\begin{thm} \label{local global equiv}
A cyclic-by-$p$ group is an Oort group if and only if it is a local Oort group.
\end{thm}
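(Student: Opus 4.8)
The forward implication is already known: a cyclic-by-$p$ Oort group is a local Oort group by \cite[Corollary~2.6]{cgh1}. So the substance of the statement is the converse, and throughout I let $G$ be a cyclic-by-$p$ local Oort group and aim to show that $G$ is a (global) Oort group. By \cite[Theorem~2.4]{cgh1}, $G$ is a global Oort group if and only if every cyclic-by-$p$ subgroup of $G$ is a local Oort group. Since $G=O_p(G)\rtimes C$ with $C$ cyclic of order prime to $p$, every subgroup $H\le G$ has normal $p$-Sylow $H\cap O_p(G)$ and cyclic prime-to-$p$ quotient, so every subgroup of $G$ is again cyclic-by-$p$. As $G$ itself is a local Oort group by hypothesis, the converse reduces to the following closure property, which I take to be the heart of the proof: \emph{every subgroup $H$ of a local Oort group $G$ is itself a local Oort group.}

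To prove this closure property, I let $H\le G$ and let $\Spec B\to\Spec k[[t]]$ be a connected normal $H$-Galois cover, corresponding to an $H$-Galois extension $M/K$ of the local field $K:=k((t))$, where $M\cong k((w))$ and $M^H=K$. The plan is to exhibit $M/K$ as the $H$-fixed subcover of a connected $G$-Galois cover over a smaller local base, lift the latter using the hypothesis on $G$, and then descend. Concretely, I would extend the faithful $H$-action on $M$ to a faithful $G$-action on $M$ restricting to the given action of $H$, so that $F:=M^G\subseteq K$ is again of the form $k((u))$, the extension $M/F$ is a connected $G$-Galois cover of $\Spec k[[u]]$, and $K=M^H$ is the intermediate field with $M/K$ the originally given $H$-cover. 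Because $G$ is a local Oort group, $M/F$ lifts to a $G$-Galois cover $\mathcal N\to\Spec R[[u]]$ with closed fiber $M/F$. Passing to the quotient by $H$, the cover $\mathcal N\to\mathcal N/H$ is $H$-Galois with base $\mathcal N/H$ a lift of $\Spec k[[t]]$ and with closed fiber $M/K$; this exhibits a lift of the given $H$-cover, so $H$ is a local Oort group.

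The main obstacle is the realization step: extending the given faithful $H$-action on $M=k((w))$ to a faithful $G$-action, equivalently realizing the given $H$-cover $M/K$ as the subcover fixed by $H$ inside a connected $G$-cover $M/F$ ramified only over the closed point. This is a local embedding (or action-extension) problem whose solvability must be established, matching the prescribed wild ramification of the $H$-action with a compatible extension to $G$. Here I would use that $K=k((t))$ has cohomological dimension at most one (it is $C_1$ by Lang's theorem, and $\mathrm{cd}_p\le 1$ in characteristic $p$), so that its absolute Galois group is projective; together with the freeness of the wild inertia, this should supply enough flexibility to solve the relevant embedding problem while controlling the branch locus. Two routine points then remain to be verified: that $\mathcal N/H$ is again a smooth formal disc over $R$, so that $\mathcal N\to\mathcal N/H$ is an admissible local lift, and that its closed fiber is $H$-equivariantly the given cover $M/K$; both follow from the normality of $\mathcal N$ and the compatibility of quotients with reduction.
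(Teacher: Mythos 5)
There is a genuine gap, and it sits exactly where you locate ``the heart of the proof.'' Your strategy is to prove directly that local Oort groups are closed under subgroups, via the realization step: given a faithful $H$-action on $M=k((w))$ with $M^H=K$, extend it to a faithful $G$-action with $M^G=F$ so that the given $H$-cover is the top layer of a $G$-cover. This step is false in general. Take $G=\Z/p^2$ and $H$ its subgroup of order $p$, and let $M/K$ be the Artin--Schreier cover $y^p-y=t^{-1}$, whose unique ramification break is $1$. If $M/F$ were a totally ramified $\Z/p^2$-extension with $M^H=K$, then (Serre, \emph{Local Fields}, IV.2) the break of $M/K$ would equal the second lower-numbering break $b_2$ of $M/F$, and Artin--Schreier--Witt theory forces $b_2=u_1+p(u_2-u_1)\ge p^2-p+1>1$. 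So this $H$-cover is not the top layer of any $G$-cover, and no choice of $F$ exists. The cohomological-dimension/projectivity argument you invoke addresses embedding problems over a \emph{fixed} base (enlarging the Galois group upward), not the problem of shrinking the base field so that a prescribed cover with prescribed wild ramification sits on top; the higher ramification data is precisely the obstruction. Note also that the paper explicitly records, right after this theorem, that closure of local Oort groups under subgroups had been an \emph{open question} and is obtained only as a corollary of the theorem --- so a soft direct proof of that closure was not available.

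The paper's actual proof is of a completely different nature: it is a case-by-case verification using the classification of which cyclic-by-$p$ groups can possibly be local Oort groups (\cite[Theorems~3.3 and~4.4]{cgh1}: cyclic, $D_{2p^n}$, $A_4$, or --- for $p=2$ --- semi-dihedral or generalized quaternion of order $\ge 16$), combined with the now-proven Oort conjecture \cite{ow,pop}, the known Oort groups $D_{2p}$, the Klein four group, and $A_4$ \cite{pagot,BW}, the KGB obstruction of \cite{cgh2} to eliminate the semi-dihedral case, and \cite{bw} to eliminate the generalized quaternion case. These deep inputs are unavoidable (already the cyclic case of the converse is essentially the Oort conjecture), and your argument makes no use of any of them. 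The reduction via \cite[Theorem~2.4]{cgh1} and the quotient construction $\mathcal N\to\mathcal N/H$ at the end are reasonable in themselves, but the proof cannot be repaired along these lines without replacing the realization step entirely.
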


\begin{proof}
The forward direction was shown at \cite[Corollary~2.6]{cgh1}.  For the reverse direction, let $G$ be a local Oort group.

If $p \ne 2$, then $G$ is either cyclic or is dihedral of order $2p^n$ for some $n$ by \cite[Theorem~3.3]{cgh1}.  In the former case, $G$ is an Oort group by \cite{pop}.  In the latter case, every quotient of $G$ is a local Oort group, by \cite[Proposition~2.11]{cgh1}.
Hence $D_{2p^m}$ is a local Oort group for all $m \le n$.  But also every cyclic group is an Oort group by \cite{pop}, and hence is a local Oort group by the forward direction of the theorem.  Thus
every subgroup of $G$ is a local Oort group.  It then follows from \cite[Theorem~2.4]{cgh1} that $G$ is an Oort group.

Next suppose that $p=2$.  By \cite[Theorem~4.4]{cgh1}, $G$ is either cyclic, or is a dihedral
$2$-group, or is the alternating group $A_4$, or is a semi-dihedral or generalized quaternion group of order at least $16$.  In the first two cases, the conclusion follows as for the case of $p$ odd.  If $G = A_4$, then every proper subgroup of $G$ is either cyclic or a Klein four group.  But these subgroups are Oort groups by \cite{pop} and \cite{pagot} respectively, and hence are local Oort groups as above.  So $G$ is an Oort group by \cite[Theorem~2.4]{cgh1}.  As for the remaining two cases, in fact they do not occur.  Namely,
semi-dihedral groups of order at least $16$ are not local Oort groups, because 
\cite[Theorem~4.1]{cgh2} says that 
every local Oort group is a KGB-group (see \cite[Definition~1.1]{cgh2} for the definition of that notion), and because such semi-dihedral groups are not KGB groups by \cite[Theorem~1.2]{cgh2}.  Moreover generalized quaternion groups are also not local Oort groups, by \cite[Proposition~4.7, Theorem~4.8]{bw}.
\end{proof}

Hence the set of local Oort groups for $k$ is closed under taking subquotients, since this holds for the set of Oort groups for $k$ \cite[Corollary~2.7]{cgh1}.  (In \cite[Proposition~2.11]{cgh1} it was shown the local Oort groups are closed under taking quotients; but it had been left open whether closure also held for subgroups.)

The next two results give a partial converse to the fact (see
\cite[Corollaries~3.5 and~4.6]{cgh1}) that every Oort group is an $\Oo$-group.

\begin{thm} \label{dihedral implies SOC}
Let $k$ be an algebraically closed field of characteristic $p$.
\begin{enumerate}
\item \label{D2pn implies weak SOC}
Let $n$ be a positive integer, and suppose that the dihedral group $D_{2p^n}$ is an Oort group for $k$.
Then for every finite group $G$ of order not divisible by $2p^{n+1}$, $G$ is an Oort group for $k$ if and only if $G$ is an $\Oo$-group for $p$.
\item \label{SOC special case}
In particular, if $2p^2$ does not divide the order of a finite group $G$, then $G$ is an Oort group for $k$ if and only if $G$ is an $\Oo$-group for $p$.
\end{enumerate}
\end{thm}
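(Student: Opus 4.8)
The plan is to reduce the global statement to a statement about cyclic-by-$p$ subgroups, where the hypothesis that $D_{2p^n}$ is an Oort group gives us direct control. The key structural input is \cite[Corollary~2.8]{cgh1}, which says that a finite group $G$ is an Oort group if and only if each of its cyclic-by-$p$ subgroups is an Oort group. Since being an $\Oo$-group is by definition a condition on the cyclic-by-$p$ subgroups, and since the class of $\Oo$-groups is closed under passing to subgroups, I would first observe that both sides of the claimed equivalence are determined entirely by the cyclic-by-$p$ subgroups of $G$. Thus it suffices to prove the equivalence for a cyclic-by-$p$ group $H$ whose order is not divisible by $2p^{n+1}$, and then apply the equivalence subgroup-by-subgroup.

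Let me think about part~\eqref{D2pn implies weak SOC} in the cyclic-by-$p$ case. The forward direction (Oort $\Rightarrow$ $\Oo$) is already known unconditionally from \cite[Corollaries~3.5 and~4.6]{cgh1}, so the content is the reverse direction: I would take a cyclic-by-$p$ $\Oo$-group $H$ with $2p^{n+1}\nmid|H|$ and show it is an Oort group. By the definition of $\Oo$-group, $H$ is of the form $C_m$, $D_{2p^m}$, or (when $p=2$) $A_4$. If $H=C_m$, it is an Oort group by \cite{pop}. If $H=A_4$ with $p=2$, it is an Oort group by \cite{BW}. The remaining case is $H=D_{2p^m}$; here the order divisibility hypothesis $2p^{n+1}\nmid|H|=2p^m$ forces $m\le n$. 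I would then invoke the hypothesis that $D_{2p^n}$ is an Oort group, together with the fact that $D_{2p^m}$ for $m\le n$ is a subgroup of $D_{2p^n}$, and use closure of Oort groups under subgroups: by \cite[Corollary~2.7]{cgh1} the class of Oort groups is closed under taking subquotients, so $D_{2p^m}$ is an Oort group as well. Assembling these cases shows every cyclic-by-$p$ subgroup of the original $G$ is an Oort group, and then \cite[Corollary~2.8]{cgh1} finishes.

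For part~\eqref{SOC special case}, I would simply specialize: if $2p^2\nmid|G|$, take $n=1$. The hypothesis of part~\eqref{D2pn implies weak SOC} is then that $D_{2p}$ is an Oort group, which holds unconditionally (by \cite{pagot} for $p=2$ and \cite{BW} for $p$ odd, as recalled in the introduction). Applying part~\eqref{D2pn implies weak SOC} with this $n$ yields the conclusion.

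\emph{The main obstacle} I anticipate is purely bookkeeping rather than conceptual: one must be careful that the order hypothesis on $G$ descends correctly to its cyclic-by-$p$ subgroups and that it pins down the exponent $m$ in the dihedral case to satisfy $m\le n$, so that $D_{2p^m}$ genuinely sits inside the assumed Oort group $D_{2p^n}$. The other point requiring attention is verifying that the $A_4$ case (only in characteristic $2$) does not collide with the order constraint; since $|A_4|=12=2^2\cdot 3$, one checks that $2p^{n+1}=2^{n+2}$ does not divide $12$ precisely when $n\ge 2$, which is automatically compatible, so no extra hypothesis on $A_4$ is needed beyond $p=2$. Once these divisibility checks are in place, the argument is a clean assembly of the cited lifting results via the subgroup criterion.
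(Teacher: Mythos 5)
Your proof is correct and takes essentially the same route as the paper's: reduce to cyclic-by-$p$ subgroups via the subgroup criterion from \cite{cgh1}, dispose of the cyclic and $A_4$ cases by the known lifting results, and obtain $D_{2p^m}$ for $m\le n$ from the hypothesis on $D_{2p^n}$ by closure of Oort groups under subquotients (\cite[Corollary~2.7]{cgh1}). The only cosmetic differences are that the paper realizes $D_{2p^m}$ as a quotient of $D_{2p^n}$ rather than a subgroup and routes the reduction through local Oort groups via \cite[Theorem~2.4]{cgh1} and Theorem~\ref{local global equiv}, and your parenthetical divisibility check for $A_4$ should read ``for all $n\ge 1$'' rather than ``precisely when $n\ge 2$''; none of this affects the argument.
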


\begin{proof}
Every Oort group is an $\Oo$-group by  
\cite[Corollaries~3.5 and~4.6]{cgh1}, so it suffices to show the reverse 
implication, under the hypothesis of the theorem.  By \cite[Theorem~2.4]{cgh1}, it is enough to 
show that every cyclic-by-$p$ subgroup $H$ of $G$ is a local Oort group, or equivalently (by 
Theorem~\ref{local global equiv} above) that $H$ is an Oort group.

Let $G$ be $\Oo$-group for $p$ whose order is not divisible by $2p^{n+1}$.  Then every cyclic-by-$p$ subgroup of $G$ is either cyclic, or of the form $D_{2p^m}$ for some $m \le n$, or (if $p=2$) isomorphic to $A_4$.  
Cyclic groups are Oort groups by \cite{ow} and \cite{pop}, and $A_4$ is an Oort group for $p=2$ by \cite{BW}.
By hypothesis, $D_{2p^n}$ is an Oort group, and hence
so is its quotient group $D_{2p^m}$ for $m \le n$, by \cite[Corollary~2.7]{cgh1}.
So indeed every cyclic-by-$p$ subgroup $H$ of $G$ is a (local) Oort group, proving part~(\ref{D2pn implies weak SOC}).

Part~(\ref{SOC special case}) then follows immediately, using that $D_{2p}$ is a local Oort group by \cite{BW}.
\end{proof}

Thus unconditionally, if $G$ is an $\Oo$-group whose order is odd or divisible by $p$ at most once (or at most twice if $p=2$), then $G$ is an Oort group.  

\begin{cor} \label{Oort O implications}
Let $k$ be an algebraically closed field of characteristic $p$.
Then the following assertions are equivalent:
\begin{enumerate}
\item \label{dihedral Oort}
For every $n$, the dihedral group $D_{2p^n}$ is an Oort group for $k$.
\item \label{SOC holds}
The Strong Oort Conjecture~\ref{strong Oort conj} holds.
\item \label{Oort O agree}
The class of Oort groups for $k$ is the same as the class of $\Oo$-groups for $p$. 
\end{enumerate}
\end{cor}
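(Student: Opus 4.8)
I would prove the three assertions equivalent by establishing the cycle of implications $(\ref{dihedral Oort}) \Rightarrow (\ref{Oort O agree}) \Rightarrow (\ref{SOC holds}) \Rightarrow (\ref{dihedral Oort})$. The whole argument is a formal consequence of Theorem~\ref{dihedral implies SOC} together with the definition of an $\Oo$-group, the only real content being a bookkeeping lemma (stated below) identifying which cyclic-by-$p$ groups are $\Oo$-groups. Throughout I use that every Oort group is an $\Oo$-group, by \cite[Corollaries~3.5 and~4.6]{cgh1}.

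\emph{Proof of $(\ref{dihedral Oort}) \Rightarrow (\ref{Oort O agree})$.} Assume that $D_{2p^n}$ is an Oort group for $k$ for every $n$. Let $G$ be an arbitrary finite group, and choose $n$ with $n \ge v_p(|G|)$, so that $p^{n+1}$, and hence $2p^{n+1}$, does not divide $|G|$. Since $D_{2p^n}$ is an Oort group by hypothesis, Theorem~\ref{dihedral implies SOC}(\ref{D2pn implies weak SOC}) applies and shows that $G$ is an Oort group for $k$ if and only if $G$ is an $\Oo$-group for $p$. As $G$ was arbitrary, the two classes coincide, which is assertion~(\ref{Oort O agree}).

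\emph{Proof of $(\ref{Oort O agree}) \Rightarrow (\ref{SOC holds})$.} The key observation is that a cyclic-by-$p$ group $G$ is an $\Oo$-group if and only if $G$ is itself of the form $C_m$, $D_{2p^n}$, or $A_4$ (the last only if $p=2$). Indeed, if $G$ is an $\Oo$-group then, being a cyclic-by-$p$ subgroup of itself, $G$ must be of one of these forms by the very definition of $\Oo$-group; conversely, one checks directly that each of these forms is an $\Oo$-group, since their cyclic-by-$p$ subgroups are again of the listed types (subgroups of $C_m$ are cyclic; cyclic-by-$p$ subgroups of $D_{2p^n}$ are cyclic or of the form $D_{2p^m}$ with $m\le n$; and for $p=2$ the cyclic-by-$2$ subgroups of $A_4$ are among $C_1, C_2, C_3, V_4 = D_4, A_4$, noting $V_4 = D_{2\cdot 2}$). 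Granting this, restrict the equality of classes in~(\ref{Oort O agree}) to cyclic-by-$p$ groups: a cyclic-by-$p$ group is an Oort group iff it is an $\Oo$-group iff it is of the form $C_m$, $D_{2p^n}$, or $A_4$. This is precisely the statement of the Strong Oort Conjecture~\ref{strong Oort conj}, giving~(\ref{SOC holds}).

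\emph{Proof of $(\ref{SOC holds}) \Rightarrow (\ref{dihedral Oort})$, and the main obstacle.} Assuming the Strong Oort Conjecture, fix any $n$. The group $D_{2p^n}$ is cyclic-by-$p$ (its cyclic $p$-subgroup of index $2$ is the normal $p$-Sylow when $p$ is odd, and $D_{2p^n}$ is a $p$-group, hence trivially cyclic-by-$p$, when $p=2$) and is of the listed form, so Conjecture~\ref{strong Oort conj} gives that it is an Oort group for $k$; as $n$ was arbitrary this is~(\ref{dihedral Oort}). I do not expect any genuine difficulty here: all the substantive lifting input has been absorbed into Theorem~\ref{dihedral implies SOC} and into~\cite{cgh1}. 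The one point requiring care is the bookkeeping lemma used in $(\ref{Oort O agree}) \Rightarrow (\ref{SOC holds})$, where one must verify that the classes $C_m$, $D_{2p^n}$, and $A_4$ are closed under passage to cyclic-by-$p$ subgroups so that they really are $\Oo$-groups; the $p=2$ case of $A_4$, where $V_4$ must be recognized as the dihedral group $D_4$, is the only slightly delicate instance.
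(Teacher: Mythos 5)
Your proposal is correct and follows exactly the same cycle of implications $(\ref{dihedral Oort}) \Rightarrow (\ref{Oort O agree}) \Rightarrow (\ref{SOC holds}) \Rightarrow (\ref{dihedral Oort})$ that the paper uses, with the first implication deduced from Theorem~\ref{dihedral implies SOC} and the other two being essentially formal. The only difference is that you spell out the bookkeeping (choosing $n \ge v_p(|G|)$ and verifying that the cyclic-by-$p$ $\Oo$-groups are precisely $C_m$, $D_{2p^n}$, and $A_4$) that the paper dismisses as immediate or trivial; this is done correctly.
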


\begin{proof}
The implication (\ref{dihedral Oort}) $\Rightarrow$ (\ref{Oort O agree}) is immediate from 
Theorem~\ref{dihedral implies SOC}, and the implications 
(\ref{Oort O agree}) $\Rightarrow$ (\ref{SOC holds})
$\Rightarrow$ (\ref{dihedral Oort}) are trivial.
\end{proof}

We next turn to our classification results for $\Oo$-groups.  For $p$ odd, the almost simple $\Oo$-groups are given explicitly in Theorem~\ref{simple-odd} below.  Using that result, we will obtain the following classification theorem for more general $\Oo$-groups with $p$ odd (see the proof at the end of Section~\ref{odd char sect}):

\begin{thm} \label{main p odd}  Let $p$ be an odd prime.
Let  $G$ be a finite group.  Set
$R=O_{p'}(G)$.  Let $P$ be a Sylow $p$-subgroup of $G$.
Then $G$ is an $\Oo$-group if and only if:
$P$ is cyclic  and  one of
the following two (disjoint) conditions holds:
\begin{enumerate}
\item  \label{first odd case}
$G=RP$, or equivalently $N_G(P)=C_G(P)$;  or
\item  \label{second odd case}  $|N_G(P)/C_G(P)|=2$,
the  order $p$ subgroup $Q \le P$ has the property that
$C_G(Q)$ is abelian and every element of $N_G(Q) \smallsetminus C_G(Q)$
is an involution that inverts $C_G(Q)$.
\end{enumerate}
Moreover, if (\ref{second odd case}) holds,  then $R$ is solvable and either $G/R$ is a dihedral group of order $2p^a$
or $G/R$  is an almost simple group given in 
(\ref{almost simple odd case 1})-(\ref{norm equals cent case 5}) of Theorem~\ref{simple-odd}.
\end{thm}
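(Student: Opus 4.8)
The plan is to prove Theorem~\ref{main p odd} by analyzing the structure forced on a group $G$ by the requirement that every cyclic-by-$p$ subgroup be an $\Oo$-group, i.e.\ cyclic, dihedral $D_{2p^m}$, or (when $p=2$, which is excluded here) $A_4$. First I would establish that $P$ must be cyclic: any cyclic-by-$p$ subgroup of $G$ has cyclic $p$-part (since $C_m$, $D_{2p^m}$ all have cyclic Sylow $p$-subgroup), so in particular every $p$-subgroup of $G$ that sits inside a cyclic-by-$p$ subgroup is cyclic; taking $P$ itself, note $P$ is cyclic-by-$p$ (indeed a $p$-group), hence must be of one of the allowed forms, forcing $P$ cyclic. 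With $P=\langle x\rangle$ cyclic, let $Q\le P$ be the unique subgroup of order $p$.

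Next I would study the automizer $N_G(P)/C_G(P)$. Because $P$ is cyclic, $\Aut(P)$ is abelian, so the automizer is abelian, and I want to show it has order $1$ or $2$. The key tool is to consider, for a generator of $N_G(P)$ modulo $C_G(P)$, the subgroup it generates together with $P$: this is cyclic-by-$p$ (a cyclic group $P$ extended by a cyclic prime-to-$p$ automizer acting on it), so it must itself be $C_m$ or $D_{2p^m}$. A $C_m$ arises exactly when the automizer acts trivially, i.e.\ $N_G(P)=C_G(P)$, giving case~(\ref{first odd case}) (the equivalence $G=RP\iff N_G(P)=C_G(P)$ following from a Frobenius/Burnside normal $p$-complement argument, since $P$ is cyclic and self-centralizing modulo $R$). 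A $D_{2p^m}$ forces the automizer to have order $2$ acting by inversion, giving $|N_G(P)/C_G(P)|=2$ as in case~(\ref{second odd case}); no other cyclic-by-$p$ shape is available, which is what rules out larger automizers. I would then translate the $D_{2p^m}$ constraint into the stated conditions on $Q$: that $C_G(Q)$ is abelian, and that every element of $N_G(Q)\smallsetminus C_G(Q)$ is an involution inverting $C_G(Q)$. Here one examines the cyclic-by-$p$ subgroup generated by $Q$ together with an element of $C_G(Q)$ or $N_G(Q)$ and reads off that a dihedral structure is forced locally at $Q$.

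For the final ``Moreover'' clause, assuming case~(\ref{second odd case}), I would first show $R=O_{p'}(G)$ is solvable. Setting $\bG=G/R$, observe $O_{p'}(\bG)=1$ by maximality of $R$. I would then invoke the structure of $\bG$ via its generalized Fitting subgroup $F^*(\bG)=E(\bG)F(\bG)$: since $O_{p'}(\bG)=1$, the Fitting subgroup $F(\bG)=O_p(\bG)$ is a $p$-group, and the components in $E(\bG)$ are quasisimple with order divisible by $p$. The cyclic Sylow $p$-subgroup together with the automizer constraint severely limits $F^*(\bG)$: either $E(\bG)=1$, in which case $\bG$ is $p$-constrained with cyclic Sylow $p$ and automizer of order $2$, forcing $\bG\cong D_{2p^a}$; or $E(\bG)\ne 1$, in which case there is a single component $S$ with $S\le\bG\le\Aut(S)$, so $\bG$ is almost simple, and the cyclicity of $P$ together with the $\Oo$-condition places $S$ on the explicit list of Theorem~\ref{simple-odd}. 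The solvability of $R$ would follow from the Feit–Thompson theorem once I show $R$ has even order is impossible or, more directly, from the fact that a minimal counterexample would produce a non-abelian simple section with cyclic Sylow $p$-subgroup incompatible with $R=O_{p'}(G)$; I would argue $R$ is a $p'$-group all of whose $G$-chief factors are forced to be abelian by the automizer-$2$ hypothesis acting by inversion.

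The main obstacle will be the passage to almost simple groups and pinning down $F^*(\bG)$. Reducing an arbitrary $\Oo$-group to the almost simple case requires careful use of the component structure: I must show that the $\Oo$-condition (a purely local, cyclic-by-$p$ condition) propagates to components and forces at most one component, and that this component is one of those classified in Theorem~\ref{simple-odd}. The delicate point is controlling how $P$ meets $E(\bG)$ versus $O_p(\bG)$ and ensuring the automizer bound of $2$ survives the quotient and the restriction to $F^*(\bG)$; this is where the classification of finite simple groups enters, via Theorem~\ref{simple-odd}, and where I expect the bulk of the genuine group-theoretic work to lie.
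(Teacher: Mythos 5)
Your outline of the main equivalence follows the paper's route almost exactly: cyclicity of $P$, the analysis of $\langle x,P\rangle$ for $x\in N_G(P)\smallsetminus C_G(P)$ forcing the automizer to be trivial or of order $2$ acting by inversion (the paper's Lemma~\ref{two cases odd} and Corollary~\ref{cor:OddCriterion}), the Burnside normal $p$-complement argument for $G=RP\iff N_G(P)=C_G(P)$ (Lemma~\ref{easy odd case}), and the dichotomy for $G/R$ via $F^*$ with a single component yielding the almost simple case (Theorem~\ref{main odd second case}). That part is sound.

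The genuine gap is in your argument that $R=O_{p'}(G)$ is solvable in case~(\ref{second odd case}). Neither of your proposed routes works. Feit--Thompson is unavailable because $R$ need not have odd order (e.g.\ for $p=3$ and $G=S_4$, which satisfies condition~(\ref{second odd case}), one has $R=O_{3'}(G)\cong C_2\times C_2$), and your claim that ``all $G$-chief factors of $R$ are forced to be abelian by the automizer-$2$ hypothesis acting by inversion'' does not follow: the involution $\tau$ is only known to invert $C_G(Q)$, hence only controls $C_R(Q)$, which is a small slice of $R$; a priori $R$ could contain a minimal normal subgroup that is a product of non-abelian simple groups on which $Q$ acts with abelian fixed points, and nothing in the inversion hypothesis touches its chief factors. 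The paper's Lemma~\ref{lem:solv} closes exactly this hole: it reduces to $G=RD$ with $D$ dihedral of order $2p$, passes to a quotient with no non-trivial normal solvable subgroup, shows that $Q$ must stabilize each simple factor $N_0$ of a minimal normal subgroup (else $Q$ would centralize a non-abelian diagonal copy, contradicting $C_R(Q)$ abelian), and then applies the non-trivial, CFSG-dependent theorem (Lemma~\ref{solv if abel cent}, from \cite{italy}) that a finite group admitting a coprime automorphism with abelian fixed-point subgroup is solvable. That external input is the missing idea; without it, or something of comparable strength, your solvability claim does not go through.
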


If $G$ has odd order, the previous two results immediately give:

\begin{cor} \label{for:odd-odd}  Let $p$ be an odd prime and $G$ a group
of odd order.   Then the following are equivalent:
\begin{enumerate}
\item $G$ is an Oort group;
\item $G$ is an $\Oo$-group;
\item  a Sylow $p$-subgroup $P$ of $G$ is cyclic and $N_G(P)=C_G(P)$; and
\item  a Sylow $p$-subgroup $P$ of $G$ is cyclic and $G=RP$ where $R=O_{p'}(G)$.
\end{enumerate}
\end{cor}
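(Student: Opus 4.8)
The plan is to deduce the corollary directly from the two preceding theorems, the key observation being that the odd-order hypothesis renders the second alternative of Theorem~\ref{main p odd} vacuous. First I would establish the equivalence of (1) and (2). Since $|G|$ is odd, the prime $2$ does not divide $|G|$, so in particular $2p^2 \nmid |G|$. Hence Theorem~\ref{dihedral implies SOC}(\ref{SOC special case}) applies verbatim and gives that $G$ is an Oort group if and only if $G$ is an $\Oo$-group. This settles (1)~$\Leftrightarrow$~(2) with no additional argument.

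Next I would prove (2)~$\Leftrightarrow$~(3)~$\Leftrightarrow$~(4) using Theorem~\ref{main p odd}, which characterizes the $\Oo$-groups for odd $p$ as those with cyclic Sylow $p$-subgroup $P$ satisfying one of its two disjoint alternatives. The crucial point is that alternative~(\ref{second odd case}) cannot occur when $|G|$ is odd: it requires $|N_G(P)/C_G(P)|=2$, whereas $N_G(P)/C_G(P)$ is a section of the odd-order group $G$ and so has odd order; equivalently, that alternative demands an involution in $N_G(Q)\smallsetminus C_G(Q)$, and an odd-order group contains no involutions. Consequently, for $G$ of odd order Theorem~\ref{main p odd} collapses to the statement that $G$ is an $\Oo$-group if and only if $P$ is cyclic and alternative~(\ref{first odd case}) holds, i.e.\ if and only if $P$ is cyclic and $G=RP$. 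This is precisely condition~(4), yielding (2)~$\Leftrightarrow$~(4).

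Finally, the equivalence (3)~$\Leftrightarrow$~(4) is immediate from the parenthetical ``or equivalently $N_G(P)=C_G(P)$'' in alternative~(\ref{first odd case}) of Theorem~\ref{main p odd}, which identifies the condition $G=RP$ with $N_G(P)=C_G(P)$ once $P$ is assumed cyclic. Stringing together the three equivalences proves the corollary. There is no substantial obstacle here, since the result is a formal consequence of the two main theorems; the only step requiring genuine care is the verification that the odd-order hypothesis eliminates case~(\ref{second odd case}) of Theorem~\ref{main p odd}, which I would justify by the absence of involutions in $G$ (equivalently, the odd order of every section of $G$).
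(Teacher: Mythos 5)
Your proposal is correct and follows exactly the route the paper intends: the paper gives no written proof, stating only that the corollary follows immediately from the two preceding results, and your argument (Theorem~\ref{dihedral implies SOC}(\ref{SOC special case}) for (1)$\Leftrightarrow$(2), plus the observation that case~(\ref{second odd case}) of Theorem~\ref{main p odd} is ruled out by the absence of involutions in an odd-order group, giving (2)$\Leftrightarrow$(3)$\Leftrightarrow$(4)) is precisely the intended filling-in of that gap.
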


For $p=2$,  there is an even more explicit description of $\Oo$-groups:

\begin{theorem}  \label{Oort 2 Sylow}
Let $G$ be a finite group with
a Sylow $2$-subgroup $P$.   Then $G$ is
an $\Oo$-group for $p=2$  if and only if either
\begin{enumerate}
\item \label{Oort 2 cyclic}
$P$ is cyclic, which implies $G = RP$ where we write $R$ for the solvable group $O(G)$; or
\item \label{Oort 2 dihedral}
$P$ is dihedral, and $C_G(K) = K$ for all elementary abelian subgroups $K \le P$ of order $4$.
\end{enumerate}
\end{theorem}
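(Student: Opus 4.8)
The plan is to reduce the entire statement to a concrete description of the cyclic-by-$2$ subgroups of $G$. Such a subgroup $H$ has a normal Sylow $2$-subgroup $S$ with $H/S$ cyclic of odd order; as $|S|$ and $|H/S|$ are coprime, Schur--Zassenhaus splits the extension as $H = S \rtimes \langle c\rangle$ with $c$ of odd order, and after replacing $H$ by a conjugate I may assume $S \le P$. The behavior of the action of $\langle c\rangle$ on $S$ is governed by $\Aut(S)$. When $S$ is cyclic, or dihedral of order at least $8$, the group $\Aut(S)$ is a $2$-group, so the odd-order element $c$ acts trivially and $H = S \times \langle c\rangle$; in particular $H$ is cyclic whenever $S$ is cyclic. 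The only such $S$ admitting a nontrivial automorphism of odd order is the Klein four group $V_4$, with $\Aut(V_4) \cong S_3$, and this is precisely the case through which $A_4$ can enter.

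For the necessity direction, suppose $G$ is an $\Oo$-group. Since $P$ is itself a cyclic-by-$2$ subgroup, it must be of the form $C_m$, $D_{2\cdot 2^n}$, or $A_4$; being a $2$-group it is therefore cyclic or dihedral (this excludes quaternion and semidihedral Sylow subgroups). If $P$ is cyclic I would invoke the standard transfer result that a finite group with cyclic Sylow $2$-subgroup has a normal $2$-complement $R$; then $R = O(G)$ has odd order, is solvable by the Feit--Thompson theorem, and $G = RP$, which is case (\ref{Oort 2 cyclic}). If $P$ is dihedral, I must force the centralizer condition. The key local input is that in a dihedral $2$-group every elementary abelian subgroup $K$ of order $4$ is self-centralizing: each such $K$ is generated by the central involution and a reflection, and a direct computation gives $C_P(K) = K$. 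Globalizing: given Klein four $K \le P$, any $2$-subgroup centralizing $K$ can be conjugated into a Sylow subgroup containing $K$ and must then lie in $C_{P}(K) = K$, so a Sylow $2$-subgroup of $C_G(K)$ is exactly $K$. Hence if $C_G(K) \ne K$, then $C_G(K)$ has nontrivial odd order, so some element of odd prime order $q$ centralizes $K$ and $K \times C_q \cong V_4 \times C_q$ is a cyclic-by-$2$ subgroup that is neither cyclic, dihedral, nor $A_4$ --- contradicting that $G$ is an $\Oo$-group. Thus $C_G(K) = K$ for all such $K$, giving case (\ref{Oort 2 dihedral}).

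For the sufficiency direction I would verify in each case that every cyclic-by-$2$ subgroup $H = S \rtimes \langle c\rangle$ has an allowed form, using the classification above. If $P$ is cyclic then every $S \le P$ is cyclic, so every $H$ is cyclic and $G$ is automatically an $\Oo$-group. If $P$ is dihedral with $C_G(K) = K$ for every Klein four $K \le P$, then the kernel of $\langle c\rangle \to \Aut(S)$ centralizes $S$; when $S$ is non-cyclic it contains a Klein four $K \le P$, so this kernel lies in $C_G(K) = K$ and hence is trivial, $c$ having odd order. Therefore $\langle c\rangle$ embeds in $\Aut(S)$. If $S$ is dihedral of order at least $8$ this forces $c = 1$ and $H = S$ is dihedral; if $S = V_4$ then $\langle c\rangle$ embeds in $S_3$ with odd order, so $H$ is $V_4 = D_4$ or $A_4$; and if $S$ is cyclic then $H$ is cyclic. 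In every case $H$ is of the required form.

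The step I expect to be the main obstacle is the passage from the purely local identity $C_P(K) = K$ inside the dihedral $2$-group to the global assertion about $C_G(K)$, together with the fusion/conjugation bookkeeping that lets me assume $S \le P$ for an arbitrary cyclic-by-$2$ subgroup; this is what makes the Klein four centralizer condition simultaneously necessary and sufficient and isolates $A_4$ as the unique non-dihedral exception. By comparison, the automorphism-group computations for cyclic and dihedral $2$-groups and the normal $2$-complement theorem are standard, and Feit--Thompson is invoked only to record the solvability of $R$.
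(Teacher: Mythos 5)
Your proposal is correct and follows essentially the same route as the paper's proof: the case split on $O_2(H)$ (cyclic, dihedral of order at least $8$, or Klein four), the observation that $\Aut(O_2(H))$ is a $2$-group except when $O_2(H)\cong C_2\times C_2$, Burnside's normal complement theorem in the cyclic case with Feit--Thompson supplying solvability of $R$, and the self-centralizing Klein four condition ruling out everything except $A_4$ in the remaining case. The only cosmetic difference is that you derive $C_G(K)=K$ by locating a Sylow $2$-subgroup of $C_G(K)$ and then producing $K\times C_q$, whereas the paper phrases the same obstruction as the nonexistence of any subgroup $K\times X$ with $X$ nontrivial.
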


The above theorem is proven at the beginning of Section~\ref{char 2 sect}.
For the dihedral case of Theorem \ref{Oort 2 Sylow}, the next assertion provides
a more precise structure result.  Its proof appears at the end of Section~\ref{char 2 sect}.

\begin{thm}  \label{main2}
Let  $G$ be  an $\Oo$-group when $p = 2$.  Set
$R=O(G)$.  Let $P$ be a Sylow $2$-subgroup of $G$ and suppose $P$ is dihedral.
Set $N=N_G(P)$.
 Then
$[R,R]$ is nilpotent and
every elementary abelian subgroup of order $4$ in $G$ acts fixed point freely
on the non-identity elements of $R$.  Moreover, one of the following holds:
\begin{enumerate}
\item \label{no A4 main2}  $A_4$ is not a subgroup 
of $G$, and $G=RP$;  or
\item \label{A4 main2}  $A_4$ is a subgroup of $G$,  and
\begin{enumerate}
\item \label{A4 complement main2}  $G=RA_4$ is semi-direct and every $G$-chief factor of
$R$ is an irreducible $3$-dimensional module; or
\item \label{S4 complement main2}  $G=RS_4$ is semi-direct and every $G$-chief factor
of $R$ is an irreducible $3$-dimensional module in which
an element of order $4$ has trace $1$; or
\item \label{nonsolv main2} $G$ is not  solvable,  $R$ is nilpotent,
$G/R \cong \PSL(2,q)$ or $\PGL(2,q)$ with
$q>4$ a power of an odd prime $r$,
and all $G$ invariant sections of
$R$ are $3$-dimensional absolutely irreducible modules  (over
the $G$-endomorphism  ring).
Moreover,  $R$ is an $r$-group unless
possibly $q=5$ or $7$ with $G/R = \PSL(2,q)$.
\end{enumerate}
\end{enumerate}
\end{thm}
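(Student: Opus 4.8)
The plan is to bootstrap from Theorem~\ref{Oort 2 Sylow}: since $G$ is an $\Oo$-group with dihedral Sylow $P$, we have $C_G(K)=K$ for every Klein four subgroup $K\le P$, and since every Klein four $2$-subgroup of $G$ is conjugate into $P$, in fact $C_G(K)=K$ for \emph{all} Klein four $K\le G$. As $R\trianglelefteq G$ has odd order, this gives $C_R(K)=C_G(K)\cap R=K\cap R=1$, which is exactly the asserted fixed-point-free statement (the subgroup $K$ fixes no nontrivial element of $R$). Moreover, for an involution $t$ in such a $K=\langle t,t'\rangle$, the involution $t'$ acts on the odd-order group $C_R(t)$ with $C_{C_R(t)}(t')=C_R(K)=1$; a fixed-point-free involution on a group of odd order inverts it (Burnside), so each $C_R(t)$ is \emph{abelian} and is inverted by $t'$ and by $tt'$. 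These facts, together with the coprime-action product decomposition $R=C_R(t_1)C_R(t_2)C_R(t_3)$ over the three involutions $t_i$ of $K$, are the basic tools.

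Next I would pin down $\bar G:=G/R$. Since $R=O(G)$, the group $\bar G$ has trivial odd core and dihedral Sylow $2$-subgroup $\cong P$, so by Gorenstein--Walter it is a dihedral $2$-group, or $A_7$, or satisfies $\PSL(2,q)\le\bar G\le\PGL(2,q)$ for an odd prime power $q$. I rule out $A_7$ using the centralizer condition: a Klein four $\bar K\le A_7$ has an order-$3$ element $\bar y$ centralizing it, and since $K$ (a lift of $\bar K$ in $P$) acts coprimely on $R$, one has $C_{\bar G}(\bar K)=C_G(K)R/R$; lifting $\bar y$ to a $3$-element of $C_G(K)$ contradicts $C_G(K)=K$. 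Thus $\bar G$ is a dihedral $2$-group or of $\PSL/\PGL$ type.

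Now I split on whether $A_4\le G$. If $A_4\not\le G$, then $\bar G$ cannot contain $A_4$: a Klein four of $P$ together with a coprime $3$-element normalizing it (produced from $N_{\bar G}(\bar V)$ via coprime action) would generate $A_4\le G$. Since every $\PSL(2,q)$ with $q\ge5$ (and $\PSL(2,3)=A_4$) contains $A_4$, this forces $\bar G$ to be a dihedral $2$-group, whence $G=RP$, giving case~(\ref{no A4 main2}). If $A_4\le G$, then $A_4\cap R=1$ (the only odd normal subgroup of $A_4$ is trivial), so $A_4R/R\cong A_4\le\bar G$; this rules out $\bar G$ dihedral and leaves $\bar G\in\{A_4,S_4,\PSL(2,q),\PGL(2,q)\}$ with $q\ge5$ in the last two. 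When $\bar G=A_4$ or $S_4$, the subgroup $A_4$, respectively a lift of $S_4$, meets $R$ trivially and so is a complement, yielding the semidirect decompositions $G=R\rtimes A_4$ and $G=R\rtimes S_4$ of cases~(\ref{A4 complement main2}) and~(\ref{S4 complement main2}); the remaining possibilities are the non-solvable case~(\ref{nonsolv main2}).

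The heart of the argument is the chief-factor analysis, and I expect this to be the \textbf{main obstacle}. Fixing $V=V_4\le P$, for any $G$-chief factor $W=H/K$ of $R$ (an irreducible $\F_\ell G$-module, $\ell$ odd) coprime action gives $C_W(V)=C_H(V)K/K$ with $C_H(V)\le C_R(V)=1$, so $W$ has \emph{no} $V$-fixed vectors. For $\bar G=A_4$ or $S_4$ the only such irreducibles over any odd-characteristic splitting field are the $3$-dimensional modules induced from a nontrivial character of $V$; in the $S_4$ case, matching the action of an order-$4$ element of $P$ against the centralizer conditions (so as to avoid a forbidden cyclic-by-$2$ section) selects the $3$-dimensional module in which a $4$-cycle has trace $+1$. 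For non-solvable $\bar G$, the list of low-dimensional irreducibles of $\PSL(2,q)$ and $\PGL(2,q)$ forces $W$ to be $3$-dimensional over its endomorphism field, which in cross characteristic is possible only for $q=5,7$; otherwise every chief factor is the defining-characteristic adjoint module, so $R$ is an $r$-group and hence nilpotent. Finally, $[R,R]$ is nilpotent because $R=C_R(t_1)C_R(t_2)C_R(t_3)$ is a product of three abelian subgroups, from which a Fitting-length/coprime-action analysis yields $R/F(R)$ abelian and thus $[R,R]\le F(R)$. The difficulty is that this last block must simultaneously deploy the full low-dimensional module theory of $\PSL(2,q)$ across \emph{all} characteristics, repeatedly invoke the $\Oo$-condition to excise the cyclic-by-$2$ sections that any ``wrong'' module would create, and extract the precise dimension, trace, and $r$-group statements with exactly the exceptions $q=5,7$.
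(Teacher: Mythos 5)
Your overall architecture matches the paper's (fixed-point-freeness from $C_G(K)=K$, Gorenstein--Walter to pin down $G/R$, then a chief-factor module analysis), but two steps that you treat as routine are exactly where the paper has to work hardest, and as sketched your argument does not close them. The first is the existence of the complements in cases (2a) and (2b). You say that when $G/R\cong S_4$ ``a lift of $S_4$ meets $R$ trivially and so is a complement,'' but the existence of such a lift is what must be proved: Schur--Zassenhaus does not apply to $R\trianglelefteq G$ here, since $3$ may divide both $|R|$ and $|G/R|=24$, and knowing that $G$ contains \emph{some} $A_4$ does not produce a subgroup isomorphic to $S_4$ mapping onto $G/R$. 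The paper constructs the complement as $N_G(K)$ for a Klein four subgroup $K$ normal modulo $R$: Sylow's theorem applied in $RK$ gives $G=RN_G(K)$, and $N_R(K)=1$ is forced by the $\Oo$-condition (a nontrivial $r\in N_R(K)$ would generate with $K$ a copy of $A_4$, and comparing $r$ with an element of $N_G(K)$ of order $3$ modulo $R$ puts a $3$-element into $C_G(K)=K$). Some such argument is needed.

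The second and more serious gap is the nilpotency of $R$ in case (2c). Your reasoning yields that $R$ is an $r$-group (hence nilpotent) only when every $G$-chief factor of $R$ lies in the defining characteristic, i.e.\ when $q>7$ or $G/R=\PGL(2,q)$. When $q=5$ or $7$ and $G/R=\PSL(2,q)$, chief factors in several different characteristics can coexist, and nilpotency of $R$ does not follow from knowing the chief-factor dimensions. The paper proves it by induction on $|R|$: for a minimal normal subgroup $A\le Z([R,R])$ of $G$, it shows $R$ must centralize $A$, since otherwise $A\otimes k$ splits into $R$-eigenspaces permuted by $G$, and a regular-orbit theorem for $K$ acting on a $G$-orbit in the character group of $R$ (Corollary \ref{action}, itself a nontrivial analysis of vector stabilizers in the $3$-dimensional module, with a separate treatment of $q=7$ via $\PSL(3,2)$) produces a free $k[K]$-summand and hence forbidden $K$-fixed points. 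Nothing in your sketch substitutes for this. Smaller issues: your appeal to Gorenstein--Walter must also exclude groups between $\PGL(2,q)$ and $\mathrm{P\Gamma L}(2,q)$ (the paper does this by showing an involutory field automorphism centralizes a Klein four subgroup); and the nilpotency of $[R,R]$ is Bauman's theorem, which your ``Fitting-length analysis'' names but does not prove. Your explicit elimination of $A_7$ is, on the other hand, a point the paper leaves implicit.
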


\section{$\Oo$-groups for odd primes} \label{odd char sect}

In this section we prove Theorem~\ref{main p odd}, classifying the finite groups that are $\Oo$-groups for an odd prime $p$.
We begin with two lemmas that apply for all primes $p$.

\begin{lemma}
\label{lem:sections} Let $G$ be an $\Oo$-group in characteristic $p$.
Then every section of $G$ is an $\Oo$-group.
\end{lemma}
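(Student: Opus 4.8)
The plan is to use that a section $H/K$ of $G$ (with $K \trianglelefteq H \le G$) is a quotient of the subgroup $H$, and to treat subgroups and quotients separately. Since being an $\Oo$-group is defined purely by a condition on cyclic-by-$p$ subgroups, the subgroup case is immediate: if $H \le G$ and $H_1 \le H$ is cyclic-by-$p$, then $H_1$ is a cyclic-by-$p$ subgroup of $G$, hence of the form $C_m$, $D_{2p^n}$, or $A_4$ (the last only when $p=2$), so $H$ is again an $\Oo$-group. Thus the whole content lies in showing that every quotient $G/K$ is an $\Oo$-group, and for this it suffices to analyze a single cyclic-by-$p$ subgroup $\bar H = H/K$ of $G/K$, where $H$ denotes the preimage of $\bar H$ in $G$.

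The key step is to lift $\bar H$ to a cyclic-by-$p$ subgroup of $G$. First I would let $P$ be a Sylow $p$-subgroup of $H$; since $K \trianglelefteq H$, the intersection $P \cap K$ is a Sylow $p$-subgroup of $K$, so the image $PK/K$ is precisely the unique normal Sylow $p$-subgroup $\bar P$ of the cyclic-by-$p$ group $\bar H$. Because $P$ is a Sylow $p$-subgroup of the normal subgroup $PK$ of $H$, the Frattini argument gives $H = PK\,N_H(P) = K\,N_H(P)$, so $N_H(P)$ surjects onto $\bar H = H/K$. Now $\bar H/\bar P$ is cyclic of order prime to $p$; I would choose an element $x \in N_H(P)$ whose image generates this quotient, replace $x$ by its $p'$-part $c$ (which has the same image, since the $p$-part maps into $\bar P$), and set $H_0 = P\langle c\rangle$. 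As $c$ normalizes $P$ and $\langle c\rangle$ is a $p'$-group, $H_0 = P \rtimes \langle c\rangle$ is cyclic-by-$p$, lies in $H \le G$, and surjects onto $\bar H$.

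To finish, I would invoke that $G$ is an $\Oo$-group to conclude $H_0 \cong C_m$, $D_{2p^n}$, or $A_4$, and then check directly that every cyclic-by-$p$ quotient of each of these families is again in the list: quotients of $C_m$ are cyclic; quotients of $D_{2p^n}$ are of the form $D_{2p^m}$ with $m \le n$ (the degenerate small cases being cyclic of order at most $2$, and for $p=2$ the Klein four group $C_2 \times C_2 \cong D_{2\cdot 2}$); and quotients of $A_4$ are $A_4$, $C_3$, or trivial. Hence $\bar H$, being a quotient of $H_0$, is of allowed form, so $G/K$ is an $\Oo$-group. I expect the main obstacle to be the construction of $H_0$: one must ensure that the complement $\langle c\rangle$ can be realized as an honest cyclic $p'$-subgroup inside $H$ itself rather than only inside $\bar H$, which is exactly what the Frattini argument together with the passage to the $p'$-part provides.
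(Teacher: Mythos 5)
Your proof is correct and follows essentially the same route as the paper's: reduce to quotients, lift a cyclic-by-$p$ subgroup of $G/K$ to a cyclic-by-$p$ subgroup of $G$ via Sylow conjugacy (the paper conjugates the Sylow subgroup by an element of the kernel, which is the Frattini argument you invoke by name), and then inspect quotients of $C_m$, $D_{2p^n}$, and $A_4$. Your explicit passage to the $p'$-part of $x$ is a minor cosmetic tightening of the same construction.
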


\begin{proof} First note that it suffices to consider sections that
are quotient groups, since subgroups of $\Oo$-groups are
$\Oo$-groups.  We now reduce to the case in which $G$ is cyclic by $p$.
To make this reduction, it will suffice to show that if $\pi:G \to \Gamma$
is a surjection of groups and $T$ is a cyclic-by-$p$ subgroup of $\Gamma$,
there is a cyclic by $p$-subgroup $T'$ of $G$ such that $\pi(T') = T$.
Let $P_T$ be the (normal) Sylow $p$-subgroup of $T$.
Every $p$-Sylow subgroup $P_{T''}$ of $T'' = \pi^{-1}(P_T)$ surjects onto
$P_T$ under $\pi$, and all such $P_{T''}$  are contained in the kernel
$J$ of $\pi^{-1}(T) \to T/P_T$.  So if $t'$ is a element of $\pi^{-1}(T)$
whose image in $T/P_T$ generates $T/P_T$, we know that
$t' P_{T''} t'^{-1} = t_0 P_{T''} t_0^{-1} \le J$ for some $t_0 \in J$.
Now $t = t_0^{-1} t'$ normalizes $P_{T''}$ so we can take $T'$
to be the group generated by $t$ and $P_{T''}$.

If $p$ is odd,  this implies that either $G$
is cyclic (and so every section is also cyclic)  or
$G$ is dihedral of order $2p^a$  (and so every section
is either cyclic or dihedral of order $2p^b$).

If $p=2$,  $G$ is either cyclic,  $A_4$ or a dihedral
$2$-group and the  result is clear.
\end{proof}

We will use the above  result often and usually without comment.

See \cite{italy} for the next result.  All proofs of this lemma
seem to  require the classification of finite simple groups.
Here $C_G(\sigma)$ denotes the subgroup of $G$ consisting of
elements fixed by an automorphism $\sigma$.

\begin{lemma}  \label{solv if abel cent}
Let $G$ be a finite group with $\sigma$ an
automorphism of order coprime to $|G|$.  If
$C_G(\sigma)$ is abelian,  then $G$ is solvable.
\end{lemma}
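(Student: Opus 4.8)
The plan is to argue by induction on $|G|$ (equivalently, to analyze a counterexample of least order) and to reduce to the case where $G$ is a non-abelian finite simple group, where the classification is invoked. Throughout I would use the standard properties of coprime action: if $\sigma$ has order prime to $|G|$ and $N \nor G$ is $\sigma$-invariant, then $\sigma$ restricts to a coprime automorphism of both $N$ and $G/N$, and $C_{G/N}(\sigma)=C_G(\sigma)N/N$.

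First I would set up the reduction. Suppose $(G,\sigma)$ is a counterexample of least order. If $N$ is a proper nontrivial $\sigma$-invariant normal subgroup, then $C_N(\sigma)=C_G(\sigma)\cap N$ and $C_{G/N}(\sigma)=C_G(\sigma)N/N$ are both abelian, being respectively a subgroup and a quotient of the abelian group $C_G(\sigma)$; by minimality $N$ and $G/N$ are solvable, so $G$ is solvable, a contradiction. Hence $G$ has no proper nontrivial $\sigma$-invariant normal subgroup. Applying this to the characteristic (hence $\sigma$-invariant) subgroups $F(G)$, the solvable radical, and the socle, I conclude that the solvable radical is trivial; therefore $F(G)=1$, $F^*(G)=E(G)$ has trivial center and equals the socle, and $E(G)\ne 1$ since $C_G(F^*(G))=Z(F(G))=1$. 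As the socle is $\sigma$-invariant and non-solvable, it must be all of $G$. Thus $G=S_1\times\cdots\times S_t$ is a direct product of non-abelian simple groups, which $\sigma$ must permute transitively (an intransitive action would yield a proper $\sigma$-invariant normal subgroup). The image of $\langle\sigma\rangle$ in $\sym(t)$ is a transitive cyclic group, hence regular and generated by a $t$-cycle, so $\sigma$ induces a single $t$-cycle on the factors.

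Next I would compute the fixed points. Since $\sigma$ induces a single $t$-cycle on the factors, projection to the first coordinate restricts to a group isomorphism $C_G(\sigma)\cong C_{S_1}(\tau)$, where $\tau=\sigma^t|_{S_1}$ is the automorphism of $S_1$ obtained by following $\sigma$ once around the orbit; its order divides $|\sigma|$ and so is coprime to $|S_1|$. Thus $C_G(\sigma)$ is abelian if and only if $C_{S_1}(\tau)$ is. This reduces the lemma to the following assertion, which is where the classification enters: \emph{for a non-abelian finite simple group $S$ and an automorphism $\tau$ of $S$ of order coprime to $|S|$ (including $\tau=1$), the fixed-point subgroup $C_S(\tau)$ is non-abelian.} Granting this, $C_{S_1}(\tau)$ is non-abelian, so $C_G(\sigma)$ is non-abelian, contradicting the hypothesis and completing the induction.

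The main obstacle is precisely this simple-group fact, and I do not expect an elementary proof to exist: as the remark preceding the lemma notes, every known argument relies on the classification. For $\tau=1$ it is trivial. For $\tau\ne 1$ one proceeds through the list. For alternating and sporadic groups the outer automorphism group has order dividing a small bound and any nontrivial outer automorphism has even order, whereas $|S|$ is even by Feit--Thompson; hence no nontrivial coprime $\tau$ exists, forcing $\tau$ inner and therefore trivial. The substantive case is $S$ of Lie type in characteristic $r$, where a coprime automorphism is, up to conjugacy, a field or graph-field automorphism; here Lang--Steinberg theory identifies $C_S(\tau)$ with a group of the same Lie type over a proper subfield, which is again non-abelian. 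I would cite \cite{italy} for the detailed case analysis rather than reproduce it.
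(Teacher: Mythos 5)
The paper does not actually prove this lemma: it records it with the remark that all known proofs require the classification of finite simple groups, and simply refers to \cite{italy}. So there is no internal argument to match yours against; what you have written is the standard reduction, and it is correct. The coprime-action identity $C_{G/N}(\sigma)=C_G(\sigma)N/N$ (valid because $\langle\sigma\rangle$ is cyclic, hence solvable), the minimal-counterexample step forcing the solvable radical to be trivial, the identification of $G$ with a product $S_1\times\cdots\times S_t$ of non-abelian simple groups permuted transitively (hence regularly, by a $t$-cycle) by $\sigma$, and the isomorphism $C_G(\sigma)\cong C_{S_1}(\sigma^t|_{S_1})$ are all sound, and they correctly isolate the irreducible core of the lemma: a non-abelian finite simple group admits no automorphism of coprime order with abelian fixed-point subgroup. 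Your sketch of that core is also essentially right --- for alternating and sporadic groups a nontrivial coprime automorphism cannot exist since $|\mathrm{Out}(S)|$ is a power of $2$ while $|S|$ is even, and for groups of Lie type one reduces to field automorphisms and a Lang--Steinberg description of the fixed subgroup, with a few small cases to check --- and deferring that case analysis to \cite{italy} is precisely what the paper itself does for the entire lemma. If anything, your write-up is more self-contained than the source; I see no gap beyond the classification-dependent fact that both you and the paper delegate to the same reference.
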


{\em For the remainder of the section, we fix an odd prime $p$.}
Let $G$ be a finite group with
a Sylow $p$-subgroup $P$, and let 
$\Cp(G)$ be the set of cyclic-by-$p$ subgroups
of $G$.
Our goal is to characterize all $\Oo$-groups with respect to $p$; i.e.\ all
finite groups $G$
such that every $H \in \Cp(G)$ either is cyclic or
is  dihedral of order twice a power of $p$.

In \cite[\S3]{cgh1}, the conclusion of the following result was proven under the (a priori stronger) hypothesis that $G$ is an Oort group:

\begin{lemma} \label{two cases odd}
Let $G$ be an $\Oo$-group.  Then:
\begin{enumerate}
\item \label{odd Sylow cyclic} A Sylow $p$-subgroup $P$ of $G$ is cyclic. 
\item \label{odd Sylow cases} If $1 \ne Q \le P$,  then either
 \begin{enumerate}
 \item \label{case norm equals cent} $N_G(Q)=C_G(Q)$, or
 \item \label{case norm unequal cent} $N_G(Q)/C_G(Q)$ has order $2$ and every element in
$N_G(Q)$ either centralizes $Q$ or is an involution that acts as
inversion on $C_G(Q)$, and $C_G(Q)$ is abelian.
 \end{enumerate}
\end{enumerate}
\end{lemma}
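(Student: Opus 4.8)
The plan is to treat the two assertions in turn: I would prove part (1) and the ``automizer skeleton'' of part (2) by applying the $\Oo$-group hypothesis to small cyclic-by-$p$ subgroups, and then extract the finer structure of case (2)(b) by producing a single inverting involution and analyzing a normal $p$-complement.

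First I would prove (1): since $P$ is a $p$-group it lies in $\Cp(G)$, so by hypothesis $P\cong C_m$ or $P\cong D_{2p^n}$; as $|P|$ is a power of the odd prime $p$, the dihedral (even order) option is impossible, so $P$ is cyclic. For the skeleton of (2), fix $1\ne Q\le P$ (so $Q$ is cyclic) and let $g\in N_G(Q)$. If $g$ is a $p$-element then $\langle Q,g\rangle$ is a $p$-group in $\Cp(G)$, hence cyclic, hence abelian, so $g\in C_G(Q)$. If $g$ is a $p'$-element then $\langle Q,g\rangle=Q\rtimes\langle g\rangle\in\Cp(G)$, so it is $C_m$ or $D_{2p^n}$; in the first case $g$ centralizes $Q$, and in the second (the action on $Q$ being nontrivial) the only possibility compatible with $D_{2p^n}$ is that $\langle g\rangle$ has order $2$ and inverts $Q$. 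Writing a general $g\in N_G(Q)$ as the product of its commuting $p$- and $p'$-parts and passing to the image in $\Aut(Q)$, these two facts show that the automizer $N_G(Q)/C_G(Q)$ embeds in $\{\pm1\}$; thus it has order $1$ (case (a)) or $2$ (case (b)), and in case (b) the $p'$-part of any non-central element is an involution inverting $Q$.

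The heart is case (2)(b). I would first reduce to $|Q|=p$: since $P$ is cyclic it has a unique subgroup $Q_1=\langle z\rangle$ of order $p$, and $Q_1$ is characteristic in every $1\ne Q\le P$, so $N_G(Q)\le N_G(Q_1)$ and $C_G(Q)\le C_G(Q_1)$. Hence once the statement is known for $Q_1$, abelianness of $C_G(Q)$ follows since it is a subgroup of the abelian $C_G(Q_1)$, and every non-central element of $N_G(Q)$, inverting $Q_1$, is one of the inverting involutions furnished for $Q_1$. So assume $Q=Q_1=\langle z\rangle$ with $|z|=p$ and $N_G(Q_1)\ne C_G(Q_1)$. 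The key elementary device (``the engine'') is: any $p'$-element $w$ with $wzw^{-1}=z^{-1}$ gives $\langle z,w\rangle=\langle z\rangle\rtimes\langle w\rangle\in\Cp(G)$, which is nonabelian, hence $\cong D_{2p^n}$, and comparing orders forces $|w|=2$. Using this I would produce an involution $\tau$ inverting the whole Sylow group $P$: starting from the $p'$-part of any non-central element of $N_G(Q_1)$ (an involution inverting $z$, by the engine), a Frattini adjustment inside $C_G(z)$ (followed by passing again to the $p'$-part) yields an involution normalizing $P$ (note $P\in\mathrm{Syl}_p(C_G(z))$ since $P\le C_G(z)$), and then the skeleton of part (2) applied to the $p$-subgroup $P$ shows this involution must invert all of $P$.

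With $\tau$ in hand I would show $C:=C_G(z)$ is abelian and is inverted by $\tau$. Burnside's normal $p$-complement theorem applies: any element of $N_C(P)$ lies in $N_G(P)$ and centralizes $z$, so by part (2) it centralizes rather than inverts $P$, giving $N_C(P)=C_C(P)$ and hence $C=K\rtimes P$ with $K=O_{p'}(C)$. Now $K\rtimes\langle\tau\rangle$ is a $p'$-group, so for every $y\in K$ the element $y\tau$ is a $p'$-element inverting $z$; by the engine $|y\tau|=2$, which forces $\tau y\tau^{-1}=y^{-1}$. Thus $\tau$ inverts $K$ (so $K$ is abelian) as well as $P$; computing $\tau(xkx^{-1})\tau^{-1}$ in two ways for $x\in P$, $k\in K$ shows $x^2$ centralizes $K$, and since squaring is surjective on the odd group $P$ this gives $[P,K]=1$. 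Therefore $C=P\times K$ is abelian and $\tau$ inverts $C$; finally, writing $N_G(Q_1)=C\cup C\tau$, every $g=c\tau$ is an involution inverting $C$ by a direct computation using $\tau^2=1$ and $\tau c\tau^{-1}=c^{-1}$, completing case (b). I expect the main obstacle to be the bookkeeping that produces a single involution inverting the full Sylow $P$ and the verification that $\tau$ inverts the entire $p$-complement $K$; once the observation that $K\rtimes\langle\tau\rangle$ is a $p'$-group is in place, together with the engine and the squaring identity, these go through, but arranging that clean setup (rather than an ad hoc element-by-element argument) is the delicate point.
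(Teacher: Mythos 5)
Your proof is correct, but it takes a genuinely different and substantially longer route than the paper's. The paper's entire argument rests on one observation you do not make: for an \emph{arbitrary} $x \in N_G(Q)\smallsetminus C_G(Q)$ (not just a $p$- or $p'$-element), the group $\langle x, Q\rangle$ is already cyclic-by-$p$ --- its normal Sylow $p$-subgroup is $Q\langle x_p\rangle$, with cyclic $p'$-quotient generated by the image of $x_{p'}$ --- hence is dihedral of order $2p^n$, hence $x$ itself is an involution inverting $Q$. Once every non-centralizing element of $N_G(Q)$ is an involution, the rest is two lines: for $g \in C_G(Q)$ the product $xg$ again lies in $N_G(Q)\smallsetminus C_G(Q)$, hence is an involution, so $x$ inverts $C_G(Q)$; since inversion is then realized by an (inner) automorphism, $C_G(Q)$ is abelian. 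Your version applies the $\Oo$-hypothesis only to $\langle Q,g\rangle$ with $g$ a $p$-element or a $p'$-element, which yields merely that the $p'$-\emph{part} of a non-centralizing element is an inverting involution; you then recover the full statement through the reduction to $\Omega_1(P)$, a Frattini adjustment producing an inverting involution $\tau$ normalizing $P$, Burnside's normal $p$-complement theorem inside $C_G(z)$, and the computation showing $C_G(z)=K\times P$ is abelian and inverted by $\tau$. I checked the steps you flag as delicate ($K\langle\tau\rangle$ being a $p'$-group since $\tau\notin C_G(z)$, and the squaring identity forcing $[P,K]=1$), and they go through; your argument even establishes more than the lemma asserts, namely the splitting $C_G(Q_1)=O_{p'}(C_G(Q_1))\times P$, structure the paper only extracts later (in Lemma~\ref{basic1} and Theorem~\ref{main odd second case}). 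Still, the detour through Frattini and Burnside is avoidable once one notices the direct cyclic-by-$p$ claim above.
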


\begin{proof}  Part (\ref{odd Sylow cyclic}) is a trivial consequence of the
hypothesis.   For part~(\ref{odd Sylow cases}), note that $Q$ is a cyclic $p$-group by part~(\ref{odd Sylow cyclic}).
If $x \in N_G(Q)\smallsetminus C_G(Q)$, the group $\langle x,Q\rangle$ is a cyclic-by-$p$
group which is not abelian.  Hence since $G$ is an $\Oo$-group, $\langle x, Q\rangle$ is
a dihedral group of order $2p^n$ for some $n$.  Since $x$ does not centralize $Q$
this means $x$ must be an involution.
Choose one such involution $x$.  If $g \in C_G(Q)$,  then
$xg$ is also in $N_G(Q)\smallsetminus C_G(Q)$, so it is also an involution.  This implies $x$ acts as inversion
on $C_G(Q)$.  Since conjugation by $x$ is a homomorphism, $C_G(Q)$ is abelian.
\end{proof}

It is easy to handle one case.

\begin{lemma}  \label{easy odd case}
Let $G$ be a finite group with a cyclic Sylow $p$-subgroup $P$.
Let $R = O_{p'}(G)$. The following are equivalent:
\begin{enumerate}
\item \label{odd semidirect}  $G = RP$.
\item \label{odd norm equals cent for Sylow} $N_G( P ) = C_G ( P )$.
\item \label{odd norm equals cent for all} $N_G( Q ) = C_G ( Q )$ for every non-trivial subgroup $Q$ of $P$.
\end{enumerate}
If any of these conditions hold then $G$ is an $\Oo$-group.
\end{lemma}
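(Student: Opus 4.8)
The plan is to establish the cyclic chain of implications $(\ref{odd semidirect}) \Rightarrow (\ref{odd norm equals cent for all}) \Rightarrow (\ref{odd norm equals cent for Sylow}) \Rightarrow (\ref{odd semidirect})$ and then deduce the final assertion. Of these, $(\ref{odd norm equals cent for all}) \Rightarrow (\ref{odd norm equals cent for Sylow})$ is immediate upon taking $Q=P$ (we may assume $P\ne 1$, for otherwise $G=R$ is a $p'$-group and all three conditions hold trivially, with $G$ an $\Oo$-group by Grothendieck). The one genuinely external input is Burnside's normal $p$-complement theorem (see \cite{asch}), which I would invoke for $(\ref{odd norm equals cent for Sylow}) \Rightarrow (\ref{odd semidirect})$; the remaining implications are elementary commutator computations exploiting that $R$ is a normal $p'$-group and $P$ is a $p$-group, so that $P\cap R=1$.

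For $(\ref{odd semidirect}) \Rightarrow (\ref{odd norm equals cent for all})$, assume $G=RP$ and fix a non-trivial $Q\le P$. Given $x\in N_G(Q)$, write $x=rq$ with $r\in R$ and $q\in P$. Since $P$ is abelian and $Q\le P$, the element $q$ centralizes $Q$, so $r=xq^{-1}$ lies in $N_G(Q)\cap R$. Then $Q\nor\langle r,Q\rangle$, so for each $y\in Q$ the commutator $[r,y]$ lies in $Q$; but because $R\nor G$ one also has $[r,y]=r^{-1}(y^{-1}ry)\in R$. Hence $[r,y]\in Q\cap R\le P\cap R=1$, so $r$ centralizes $Q$ and therefore $x\in C_G(Q)$. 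This yields $N_G(Q)=C_G(Q)$ for every non-trivial $Q\le P$.

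For $(\ref{odd norm equals cent for Sylow}) \Rightarrow (\ref{odd semidirect})$, the hypothesis $N_G(P)=C_G(P)$ says precisely that the abelian (cyclic) group $P$ is central in $N_G(P)$, so Burnside's normal $p$-complement theorem produces a normal subgroup $N\nor G$ with $G=NP$ and $N\cap P=1$. I would then identify $N$ with $R$: any normal $p'$-subgroup $M$ maps to a $p'$-subgroup of the $p$-group $G/N\cong P$, hence $M\le N$, so $N=O_{p'}(G)=R$ is the largest such subgroup, and $G=RP$.

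Finally, to see that any of the equivalent conditions forces $G$ to be an $\Oo$-group, I would prove the stronger statement that \emph{every} $H\in\Cp(G)$ is cyclic. Using $(\ref{odd semidirect})$, for such $H$ the intersection $H\cap R$ is a normal $p'$-subgroup of $H$ whose quotient $H/(H\cap R)\hookrightarrow G/R\cong P$ is a $p$-group, so $H\cap R$ is a normal $p$-complement in $H$. On the other hand, $H$ being cyclic-by-$p$ has a normal Sylow $p$-subgroup $P_H$ with $H/P_H$ cyclic. Since $H\cap R$ and $P_H$ are both normal, of coprime orders, and generate $H$, one gets a direct product $H=(H\cap R)\times P_H$; here $P_H$ is cyclic, being contained in a conjugate of $P$, and $H\cap R\cong H/P_H$ is cyclic, so $H$ is cyclic. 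As cyclic groups are $\Oo$-groups for every $p$, this proves the claim. I expect the only subtlety — rather than a serious obstacle — to be the correct identification of the Burnside complement with $R=O_{p'}(G)$ and the bookkeeping showing that the two normal decompositions of $H$ coincide to give a direct product of coprime cyclic factors; the commutator manipulations and the reduction to $Q=P$ are routine.
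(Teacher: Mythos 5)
Your proof is correct and follows essentially the same route as the paper: Burnside's normal $p$-complement theorem for $(\ref{odd norm equals cent for Sylow})\Rightarrow(\ref{odd semidirect})$, elementary normal-subgroup/commutator arguments for the rest, and the observation that under these conditions every cyclic-by-$p$ subgroup is actually cyclic. The only (harmless) difference is that you prove $(\ref{odd semidirect})\Rightarrow(\ref{odd norm equals cent for all})$ directly via the commutator computation $[r,y]\in Q\cap R=1$, whereas the paper first establishes $(\ref{odd semidirect})\Leftrightarrow(\ref{odd norm equals cent for Sylow})$ and then deduces $(\ref{odd norm equals cent for all})$ by applying that equivalence to the subgroup $RQ$.
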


\begin{proof}  Suppose first that $N_G( P ) = C_G ( P )$.  By Burnside's normal $p$-complement
theorem \cite[p.~419]{huppert},
$G=R'P$ with $R'$ a normal $p'$-subgroup which is a complement to $P$.  Because $P$ is a $p$-group, $R'$ must
be $R= O_{p'}(G)$ so $G = RP$.

Suppose now that $G = RP$.  Let us prove that $N_G( P ) = C_G ( P )
$.
  We have
$N_G( P ) = R' P$ where $R' = N_G ( P ) \cap R$.  Now both $P$ and $R'$ are normal in
$N_G( P )$ and they have trivial intersection, so $N_G( P ) $ is isomorphic to the product
of $P$ and $R'$. This implies $N_G ( P ) = C_G( P )$.

We have now shown that conditions (\ref{odd semidirect}) and (\ref{odd norm equals cent for Sylow}) are equivalent, and clearly (\ref{odd norm equals cent for all}) implies
(\ref{odd norm equals cent for Sylow}). So it will suffice to show that if (\ref{odd semidirect}) and (\ref{odd norm equals cent for Sylow}) hold then (\ref{odd norm equals cent for all}) holds.
Let $Q$ be a subgroup of $P$
and let $G' = R Q$.  
Then $N_{G'}(Q) = C_{G'}(Q)$ by applying the equivalence of conditions
(\ref{odd semidirect}) and (\ref{odd norm equals cent for Sylow}) for the group $G'$.  So since $P$ is cyclic, we get
$N_G(Q) = \langle P, N_{G'}(Q)\rangle = \langle P, C_{G'}(Q)\rangle = C_G(Q)$.

For the final assertion in the Lemma, we suppose condition (\ref{odd norm equals cent for all}) holds.  If $Q$ is any subgroup
of $P$,  then $N_G(Q) =C_G(Q)$. Since $P$ is cyclic, this implies that every cyclic-by-$p$ subgroup of $G$
is in fact cyclic, so  $G$ is an $\Oo$-group.
\end{proof}

We now turn to the other case of Lemma~\ref{two cases odd},
i.e.\ of $\Oo$-groups $G$ such that
$N_G(P)  \ne C_G( P )$.

\begin{lemma} \label{basic1}
Let $G$ be an $\Oo$-group with Sylow $p$-subgroup $P$, such that $N_G(P)  \ne C_G(P)$.
Let $\tau$ be an element of $N_G(P) \smallsetminus C_G(P)$.  Then $\tau$ is an
involution  that acts as inversion
on the abelian subgroup $C_G(P)$.   Since $P$ is cyclic,
$\tau$ normalizes each subgroup of $P$.
Let $1 \ne Q \le P$.
\begin{enumerate}
\item  Every element
of $\tau C_G(Q)$ is an involution and acts as inversion
on $C_G(Q)$.
\item    $C_G(Q)=C_G(P)$ is abelian.
\item    $N_G(Q)=N_G(P)$.
\item    Every member of the set $\Cp(G)$ of cyclic by $p$ subgroups of $G$ either has order
prime to $p$ or is conjugate
to a subgroup of $N_G(P)$.
\end{enumerate}
\end{lemma}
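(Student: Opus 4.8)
The plan is to use Lemma \ref{two cases odd} as the engine and bootstrap from the Sylow subgroup $P$ to an arbitrary nontrivial $Q \le P$. First I would record what Lemma \ref{two cases odd} gives directly for $Q = P$: since $N_G(P) \ne C_G(P)$, case (\ref{case norm equals cent}) is excluded, so $N_G(P)/C_G(P)$ has order $2$, $C_G(P)$ is abelian, and every element of $N_G(P)$ either centralizes $P$ or is an involution inverting $C_G(P)$. In particular the chosen $\tau \in N_G(P)\smallsetminus C_G(P)$ is an involution inverting $C_G(P)$. That $\tau$ normalizes every subgroup of the cyclic group $P$ is immediate, since $\tau$ acts on the cyclic $p$-group $P$ through an automorphism (inversion on the part it does not centralize) that stabilizes the unique subgroup of each order.

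\textbf{The key containments.} For a nontrivial $Q \le P$, the heart of the argument is to prove $C_G(Q) = C_G(P)$ (part (2)), since the other parts follow from it. The inclusion $C_G(P) \subseteq C_G(Q)$ is trivial. For the reverse inclusion I would argue as follows. Since $P$ is cyclic, $P \le C_G(Q)$; moreover $P$ is a Sylow $p$-subgroup of $C_G(Q)$ (its order cannot grow inside a subgroup of $G$). Now apply Lemma \ref{two cases odd}(\ref{odd Sylow cases}) to $Q$ itself inside $G$: because $\tau$ does not centralize $P$ and hence does not centralize $Q$ (as $\tau$ inverts $C_G(P) \supseteq Q$ nontrivially), we are in case (\ref{case norm unequal cent}), so $C_G(Q)$ is abelian and $N_G(Q)/C_G(Q)$ has order $2$. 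Then $P \le C_G(Q)$ together with the abelianness of $C_G(Q)$ forces $C_G(Q) \le C_G(P)$: any $g \in C_G(Q)$ commutes with $P \le C_G(Q)$, so $g$ centralizes $P$. This yields $C_G(Q) = C_G(P)$, which is abelian. Part (1) then follows because $\tau$ inverts $C_G(Q) = C_G(P)$ and any element of $\tau C_G(Q)$ is a product of an involution inverting an abelian group with an element of that group, hence is again an involution acting as inversion. Part (3), $N_G(Q) = N_G(P)$, follows by a sandwich argument: $\tau$ normalizes both $Q$ and $P$ and $C_G(Q) = C_G(P)$, so $N_G(Q) = \langle C_G(Q), \tau\rangle = \langle C_G(P), \tau\rangle = N_G(P)$, using that each normalizer has index $2$ over its common centralizer.

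\textbf{The final part and the main obstacle.} For part (4), let $H \in \Cp(G)$ have order divisible by $p$, and let $P_H \le H$ be its normal Sylow $p$-subgroup, which is a nontrivial $p$-group. After conjugating, I may assume $P_H \le P$, so $P_H$ is cyclic. Writing $Q = P_H$, every element of $H$ normalizes $Q$ (as $Q \nor H$), so $H \le N_G(Q) = N_G(P)$ by part (3); this places $H$ inside a conjugate of $N_G(P)$, as required. I expect the main subtlety to be the bootstrap inclusion $C_G(Q) \subseteq C_G(P)$: it is tempting to invoke it directly, but it genuinely relies on first establishing via Lemma \ref{two cases odd} that $C_G(Q)$ is abelian, and then using $P \le C_G(Q)$. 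The one point demanding care is verifying that $\tau$ really fails to centralize $Q$ for every nontrivial $Q \le P$ — this is where the fact that $\tau$ inverts all of $C_G(P)$ (and $Q$ is a nontrivial subgroup of $P \le C_G(P)$ on which inversion is nontrivial, since $p$ is odd) is essential.
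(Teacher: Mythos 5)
Your proposal is correct and follows essentially the same route as the paper: both rest on Lemma~\ref{two cases odd} together with the observation that, since $p$ is odd, $\tau$ inverts the nontrivial $Q\le P$ nontrivially, so that case~(\ref{case norm unequal cent}) applies to every nontrivial subgroup of $P$ and forces $C_G(Q)$ abelian, hence $C_G(Q)=C_G(P)$ and $N_G(Q)=\langle C_G(Q),\tau\rangle=N_G(P)$. The only cosmetic difference is ordering: you obtain part (2) by citing Lemma~\ref{two cases odd} for $Q$ and then deduce part (1), whereas the paper proves part (1) first via the inline dihedral computation with $\langle Q,\tau g\rangle$ (which is the same computation underlying Lemma~\ref{two cases odd}) and then deduces part (2).
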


\begin{proof} The first statement concerning $\tau$
was shown in  Lemma~\ref{two cases odd}.
If $g \in C_G(Q)$,  then $\langle Q,  \tau g \rangle$
is in $\Cp(G)$ and is not cyclic,  whence it must be dihedral
of order $2p^a$ for some $a$.  In particular,  every element
either is a $p$-element or has order $2$.  Thus,
$\tau g$ is an involution and so $\tau$ inverts $g$.
This proves the first statement.

Since inversion is an automorphism of $C_G(Q)$,  it follows
that $C_G(Q)$ is abelian.  Since $C_G(Q)$ contains $P$ we conclude that
$C_G(Q)=C_G(P)$.   By Lemma \ref{two cases odd},  $N_G(Q)  = \langle C_G(Q),  \tau \rangle
=  \langle C_G(P),  \tau \rangle = N_G(P)$.

If $X \in \Cp(G)$ and does  not have order prime to $p$,
then,  by conjugating,  we may assume that $Q:=O_p(X)  \le P$,
whence $X \le N_G(Q)=N_G(P)$.
\end{proof}

This gives the following nice criterion for checking for
$\Oo$-groups.

\begin{corollary}
\label{cor:OddCriterion} Let $p$ be an odd prime,  $G$ a finite
group with cyclic Sylow $p$-subgroup $P$.  The following conditions are equivalent:
\begin{enumerate}
\item \label{cond:Ogroup} $G$ is an $\Oo$-group.
\item \label{cond:NOmegaOgroup} $N_G(\Omega_1(P))$ is an $\Oo$-group.
\item \label{cond:involution} Either $N_G(P)=C_G( P )$, or there is an involution
$\tau$ inverting the abelian group $C_G(\Omega_1(P))$
and $N_G(\Omega_1(P))  = \langle C_G(\Omega_1(P)),  \tau \rangle$.
\end{enumerate}
\end{corollary}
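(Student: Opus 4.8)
The plan is to prove the cycle of implications $(\ref{cond:Ogroup}) \Rightarrow (\ref{cond:NOmegaOgroup}) \Rightarrow (\ref{cond:involution}) \Rightarrow (\ref{cond:Ogroup})$, writing $Q = \Omega_1(P)$ for the unique subgroup of order $p$ in the cyclic group $P$. The first implication is immediate: subgroups of $\Oo$-groups are $\Oo$-groups (as used in the proof of Lemma~\ref{lem:sections}), and $N_G(Q)$ is a subgroup of $G$. Throughout I will use that $P$ is abelian, so that $P \le C_G(Q) \le N_G(Q)$, and hence $P$ is also a (cyclic) Sylow $p$-subgroup of $N_G(Q)$.

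For $(\ref{cond:NOmegaOgroup}) \Rightarrow (\ref{cond:involution})$, I would apply Lemma~\ref{two cases odd} to the $\Oo$-group $N := N_G(Q)$, at the subgroup $Q$ of its cyclic Sylow subgroup $P$. Since $Q \nor N$, we have $N_N(Q) = N$ and $C_N(Q) = C_G(Q)$, so the dichotomy of Lemma~\ref{two cases odd}(\ref{odd Sylow cases}) reads: either $N_G(Q) = C_G(Q)$, or $N_G(Q)/C_G(Q)$ has order $2$ with $C_G(Q)$ abelian and some involution $\tau \in N_G(Q)\smallsetminus C_G(Q)$ inverting $C_G(Q)$. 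The second case is exactly the second alternative of (\ref{cond:involution}), since then $N_G(Q) = \langle C_G(Q), \tau\rangle$. In the first case I must upgrade $N_G(Q) = C_G(Q)$ to $N_G(P) = C_G(P)$. Here the point is that $Q = \Omega_1(P)$ is characteristic in $P$, so $N_G(P) \le N_G(Q) = C_G(Q)$; thus the automizer $N_G(P)/C_G(P)$, which embeds in $\Aut(P)$ (cyclic, as $p$ is odd), acts trivially on $\Omega_1(P)$. For odd $p$ the kernel of the restriction $\Aut(P) \to \Aut(\Omega_1(P))$ is precisely the Sylow $p$-subgroup of $\Aut(P)$; but $N_G(P)/C_G(P)$ has order prime to $p$, since $P$ is a Sylow $p$-subgroup of $N_G(P)$ lying in $C_G(P)$. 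Hence $N_G(P)/C_G(P) = 1$, which is the first alternative of (\ref{cond:involution}).

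For $(\ref{cond:involution}) \Rightarrow (\ref{cond:Ogroup})$, I would treat the two alternatives separately. If $N_G(P) = C_G(P)$, then $G$ is an $\Oo$-group directly by the final assertion of Lemma~\ref{easy odd case}. Otherwise I have the involution $\tau$ with $N_G(Q) = \langle C_G(Q), \tau\rangle$, where $C_G(Q)$ is abelian and inverted by $\tau$, and I must check that every $H \in \Cp(G)$ is cyclic or dihedral of order $2p^a$. If $H$ has order prime to $p$ it is cyclic. Otherwise $O_p(H)$ is a nontrivial cyclic $p$-group, and after conjugating I may assume $O_p(H) \le P$, so that $\Omega_1(O_p(H)) = Q$ and hence $H \le N_G(Q)$. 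If $H \le C_G(Q)$ then $H$ is abelian and cyclic-by-$p$, hence cyclic. If not, then $A := H \cap C_G(Q)$ has index $2$ in $H$ and is cyclic (being abelian and cyclic-by-$p$), and any $\sigma \in H \smallsetminus A$ lies in the coset $\tau C_G(Q)$; a direct computation using $\tau^2 = 1$ and that $\tau$ inverts the abelian group $C_G(Q)$ shows $\sigma^2 = 1$ and $\sigma$ inverts $A$, so $H = \langle A, \sigma\rangle$ is dihedral. Finally, because $H$ is cyclic-by-$p$, the quotient $H/O_p(H)$ must be cyclic, which forces the cyclic group $A$ to have $p$-power order; thus $H \cong D_{2p^a}$, as required.

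The main obstacle is the first case of the step $(\ref{cond:NOmegaOgroup}) \Rightarrow (\ref{cond:involution})$, namely passing from control of $N_G(\Omega_1(P))$ to control of $N_G(P)$. This is exactly where the hypotheses that $p$ is odd and $P$ is cyclic are essential, via the fact that for odd $p$ an automorphism of a cyclic $p$-group acting trivially on $\Omega_1(P)$ is a $p$-element, combined with the coprimality of the automizer index to $p$. The dihedral-structure bookkeeping in $(\ref{cond:involution}) \Rightarrow (\ref{cond:Ogroup})$ is then routine once the coset computation for $\sigma$ is carried out.
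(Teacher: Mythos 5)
Your proof is correct and follows essentially the same route as the paper: reduce every cyclic-by-$p$ subgroup into $N_G(\Omega_1(P))$, apply Lemma~\ref{two cases odd} there, and verify the cyclic/dihedral dichotomy directly for (\ref{cond:involution}) $\Rightarrow$ (\ref{cond:Ogroup}) via the coset computation with $\tau$. The only organizational difference is that you obtain (\ref{cond:NOmegaOgroup}) $\Rightarrow$ (\ref{cond:involution}) by casing on whether $N_G(\Omega_1(P))=C_G(\Omega_1(P))$, which needs your (correct) extra observation that a $p'$-automorphism of a cyclic $p$-group acting trivially on $\Omega_1(P)$ is trivial, whereas the paper cases on whether $N_G(P)=C_G(P)$ and cites Lemma~\ref{basic1}.
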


\begin{proof} That (\ref{cond:Ogroup}) implies (\ref{cond:NOmegaOgroup}) is immediate.  Since the $p$-Sylow subgroups
of $p$ are cyclic, any cyclic-by-$p$ subgroup of $G$ can be conjugated into
$N_G(\Omega_1(P))$, so (\ref{cond:NOmegaOgroup}) implies (\ref{cond:Ogroup}).  To show (\ref{cond:NOmegaOgroup}) implies (\ref{cond:involution}), note
first that the normalizer in $G$ of any non-trivial subgroup of $P$ is contained
in $N_G(\Omega_1(P))$.  Hence (\ref{cond:NOmegaOgroup}) implies (\ref{cond:involution}) on replacing $G$ by $N_G(\Omega_1(P))$
and on letting $Q = \Omega_1( P ) $ in Lemmas  \ref{two cases odd} and \ref{basic1}.
Suppose finally that condition (\ref{cond:involution}) holds.  If $N_G( P ) = C_G ( P )$ then (\ref{cond:Ogroup}) holds
by Lemma \ref{easy odd case}. So we now suppose that there is an involution
$\tau$ as in part (\ref{cond:involution}).  Suppose that $X$ is a cyclic by $p$ subgroup of $G$.
By conjugating $X$ inside $G$, we can suppose that $X$ is contained in $N_G(\Omega_1 ( P ))$, which by hypothesis
is $\langle C_G(\Omega_1( P )), \tau)$.  From the fact that $\tau$ is an involution
inverting the abelian group
$C_G(\Omega_1 ( P ))$, we now see that $X$ is either cyclic or dihedral of order $2 p^n$
for some integer $n$.  Hence (\ref{cond:Ogroup}) holds.
\end{proof}

Note that the above results prove that $G$ is an $\Oo$-group if and only if its Sylow $p$-subgroup is cyclic and either (\ref{first odd case}) or (\ref{second odd case}) of Theorem \ref{main p odd} holds; see the proof of Theorem \ref{main p odd} at the end of this section for details.  The following results will enable us to complete the proof of that theorem.

\begin{lemma}
\label{lem:solv}
Let $G$ be an $\Oo$-group with Sylow $p$-subgroup $P$, such that $N_G(P)  \ne C_G(P)$.
Then
$R:=O_{p'}(G)$ is  solvable.
\end{lemma}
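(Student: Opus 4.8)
The plan is to apply Lemma~\ref{solv if abel cent} to the group $R = O_{p'}(G)$, using a coprime automorphism whose fixed subgroup we can force to be abelian. Since $R$ is by definition a $p'$-group, conjugation by an element of $p$-power order is an automorphism of $R$ of order coprime to $|R|$, which makes it a natural candidate for the automorphism $\sigma$ in that lemma.

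Concretely, I would take $Q = \Omega_1(P)$, the unique subgroup of order $p$ in the cyclic group $P$, fix a generator $x$ of $Q$, and let $\sigma$ be the automorphism of $R$ given by conjugation by $x$. Because $R$ is normal in $G$, conjugation by $x$ does preserve $R$, and since $x$ has order $p$, the order of $\sigma$ divides $p$ and is therefore coprime to $|R|$. The key point is to identify the fixed subgroup: $C_R(\sigma) = R \cap C_G(x) = R \cap C_G(Q)$. By Lemma~\ref{basic1}, the hypothesis $N_G(P) \ne C_G(P)$ forces $C_G(Q) = C_G(P)$ to be abelian, and hence $C_R(\sigma)$, being a subgroup of $C_G(P)$, is abelian as well.

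At this stage the deep input does the work: Lemma~\ref{solv if abel cent} applies directly, with $\sigma$ a coprime automorphism of $R$ having abelian centralizer, and yields that $R$ is solvable. The only degenerate case to dispose of is that $\sigma$ is trivial, i.e.\ that $Q$ centralizes $R$; but then $R \le C_G(Q) = C_G(P)$ is itself abelian, hence solvable, so the conclusion holds in that case too.

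The main obstacle here is not the argument but the tool: everything rests on Lemma~\ref{solv if abel cent} (whose known proofs use the classification of finite simple groups), together with the earlier structural fact from Lemma~\ref{basic1} that $C_G(P)$ is abelian when $N_G(P) \ne C_G(P)$. Once those are granted, the proof reduces to the one-line verification that the fixed points of the order-$p$ automorphism $\sigma$ lie in the abelian group $C_G(P)$.
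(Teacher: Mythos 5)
Your proof is correct, but it takes a noticeably more direct route than the paper's. You apply Lemma~\ref{solv if abel cent} once, to all of $R$, with $\sigma$ the conjugation action of a generator of $Q=\Omega_1(P)$: the order of $\sigma$ divides $p$ and is hence coprime to $|R|$ since $R=O_{p'}(G)$, and $C_R(\sigma)=R\cap C_G(Q)=R\cap C_G(P)$ is abelian by Lemma~\ref{basic1}, so the lemma applies (and your disposal of the degenerate case where $Q$ centralizes $R$ is fine, though even there the lemma's statement still applies with $\sigma$ of order $1$). The paper instead does several reductions first: it forms the dihedral group $D=\langle Q,\tau\rangle$, checks $\tau\notin R$, reduces to $G=RD$, passes to a quotient in which $R$ has no non-trivial normal solvable subgroup, takes a minimal normal subgroup $N\le R$ (a product of non-abelian simple groups), shows $P$ must fix each simple factor (else $P$ would centralize a non-abelian diagonal copy, contradicting $C_R(P)$ abelian), and only then invokes Lemma~\ref{solv if abel cent} --- applied to a single non-abelian simple factor $N_0$ --- to reach a contradiction. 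In effect the paper re-derives, inside its proof, the standard reduction of the coprime-automorphism solvability criterion to the simple case, whereas you take the criterion as the black box it is stated to be (an arbitrary finite group with a coprime automorphism having abelian fixed points is solvable). Both arguments rest on the same deep inputs, namely Lemma~\ref{basic1} and the classification-dependent Lemma~\ref{solv if abel cent}; yours is shorter because it uses the cited lemma at full strength, while the paper's version would survive even if one only trusted that lemma for simple (or characteristically simple) groups.
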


\begin{proof} The hypotheses imply that the $p$-Sylow $P$ is cyclic.  Let $Q$ be the
subgroup of $P$ of order $p$.  We know furthermore by Lemma \ref{two cases odd} that there is an involution $\tau \in G$
which normalizes $Q$.  The group $D$ generated by $Q$ and $\tau$ is dihedral
of order $2p$.  If $\tau$ were in the normal $p'$-subgroup $R$, then we would
have $g \tau g^{-1} \in R$ when $g$ generates $Q$.  Then  $\tau^{-1} g\tau g^{-1} = g^{-2} \in R$, but this is a non-trivial element of $Q$ because $p$ is odd, contradicting the fact that $R$
has order prime to $p$. Thus $\tau$ does not lie in $R$, so the semi-direct product
$RD$ has order $|R| \cdot 2p$.  We now observe that $R' = O_{p'}(RD)$
equals $R$, since $R'$ contains $R$ and $R'/R$ is a normal $p'$-subgroup of the dihedral group $D = RD/R$ of order $2p$.  Hence we can reduce to the case in which
 $G = RD$, so now $P = Q$.

Passing to a quotient,  we may also assume that $R$ contains no non-trivial normal
solvable subgroup of $G$.  We will suppose $R$ is non-trivial and obtain a contradiction.
Since $R$ is normal in $G$, there is a non-trivial minimal normal subgroup $N$ of $G$
which is contained in $R$. This $N$ must be the product of some number of
copies of isomorphic simple groups, which by assumption must be non-abelian.
  Thus,  $P$ acts on $R$ and on $N$, and $C_R(P)$ is abelian because $C_G( P ) $
is abelian by Lemma \ref{basic1}.

Suppose there is a simple factor of $N$ which is not fixed by $P = Q$.  Then $P$ would permute some
number of factors. Because $|P| = p$, $P$ would then centralize a subgroup of $N$ which is a diagonally embedded copy of one simple factor.
This could contradict the fact that $C_R( P ) $ abelian since the simple factors
of $N$ are not abelian.  Thus $P$ fixes each factor of $N$.

 Let
$N_0$ be one of the simple factors of $N$, so that $P$ normalizes
$N_0$.  A generator $\sigma$ of $P$ then acts on $N_0$.  The
centralizer $C_{N_0}(\sigma)$ is contained in the abelian
group $C_R( P )$ so it is abelian. Therefore by
Lemma \ref{solv if abel cent}, $N_0$ must be solvable, contradicting
the fact that it is a non-abelian simple group. This  contradiction completes the proof.
\end{proof}

We next classify the almost simple $\Oo$-groups.
This seems to  require the
classification of finite simple groups.

\begin{theorem} \label{simple-odd}  Let $G$ be a finite group and let $p$ be an odd prime.  Suppose that $G$ is almost simple, i.e.\ that $S:=F^*(G)$ is a non-abelian simple group.
Then $G$ is an $\Oo$-group if and only if
one of the following holds:
\begin{enumerate}
\item \label{almost simple odd case 1}
$G=\PSL(2,q)$ or $\PGL(2,q)$ with $q > 3$ and $p$ dividing $q^2-1$.
\item  \label{almost simple odd case 2}
$G= S = \Sz(q)$ with $q=2^{2k+1} > 2$ and $p|(q-1)$.
\item  \label{almost simple odd case 3}
$G= S = \Ree(q)$ with $q=3^{2k+1} > 3$ and $p$ dividing $q-1$.
\item  \label{almost simple odd case 4}
$S =\PSL(3,4)$ and $[G:S]=1,2$ or $4$ with $p=5$.
\item \label{norm equals cent case 5} $N_G( P ) = C_G( P )$ for $P$ a $p$-Sylow subgroup of $G$.
\end{enumerate}
\end{theorem}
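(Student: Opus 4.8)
The plan is to combine the local criterion of Corollary~\ref{cor:OddCriterion} with the classification of finite simple groups, reading off the answer from the centralizers and normalizers of elements of order $p$. By Lemma~\ref{two cases odd} and Corollary~\ref{cor:OddCriterion}, once the Sylow $p$-subgroup $P$ is cyclic, $G$ is an $\Oo$-group exactly when either $N_G(P)=C_G(P)$ --- which is conclusion~(\ref{norm equals cent case 5}) and always yields an $\Oo$-group by Lemma~\ref{easy odd case} --- or else $C:=C_G(\Omega_1(P))$ is abelian, $N_G(\Omega_1(P))=\langle C,\tau\rangle$ for an involution $\tau$, and, crucially, $\tau$ inverts \emph{all} of $C$ (Lemma~\ref{basic1}). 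Since being an $\Oo$-group already forces $P$ cyclic (Lemma~\ref{two cases odd}(\ref{odd Sylow cyclic})), I would organize the proof around the binding requirement in the second alternative: not merely that the automizer $N_G(\Omega_1(P))/C$ have order $2$, but that its nontrivial element invert the entire abelian group $C=C_G(x)$, where $\langle x\rangle=\Omega_1(P)$.

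For the sufficiency of conclusions~(\ref{almost simple odd case 1})--(\ref{almost simple odd case 4}) I would exhibit the relevant self-centralizing cyclic torus and check that its normalizer is genuinely (generalized) dihedral. In $\PSL(2,q)$ and $\PGL(2,q)$ an odd prime $p\mid q^2-1$ (so $p$ is not the defining characteristic) lies in a cyclic maximal torus $T$ of order dividing $q\pm1$ that is self-centralizing, with $N_G(T)=T\rtimes\langle w\rangle$ and $w$ inverting all of $T$; this is precisely condition~(\ref{cond:involution}). For $\Sz(q)$ and $\Ree(q)$ with $p\mid q-1$ the cyclic torus of order $q-1$ again has a dihedral normalizer inverting the whole torus, while for the remaining classes of $p$-elements in these groups either the centralizer fails to be abelian or the automizer exceeds $2$, so those primes are rightly excluded. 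Finally $\PSL(3,4)$ at $p=5$ is an accident: here $5\mid q+1$ with $q=4$, the $5$-element lies in the torus $\F_{q^2}^\times$, and the order-$2$ relative Weyl element acts by $\alpha\mapsto\alpha^q=\alpha^{-1}$ on the part of order $q+1$; the point is that the $w$-fixed subtorus (of order $q-1=3$) is exactly the order-$3$ center of $\SL(3,4)$, so it dies in $\PSL(3,4)$, the centralizer $C_G(x)$ collapses to order $5=q+1$, and is therefore inverted in its entirety. One then checks from the known structure of $\aut(\PSL(3,4))$ that this persists exactly in the extensions of index $1,2,4$.

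For the necessity direction I would assume $G$ is an almost simple $\Oo$-group with $N_G(P)\ne C_G(P)$ and run through the families of the classification, in each testing the three conditions: $P$ cyclic, $C_G(x)$ abelian, and $\tau$ inverting all of $C_G(x)$. Alternating groups are dispatched quickly: cyclicity of $P$ forces $p\le n<2p$, abelianness of $C_G(x)$ then forces $n-p$ small, and the inversion condition forces the automizer of a $p$-cycle down to order $2$; the surviving possibilities are the coincidences $A_5\cong\PSL(2,4)$ and $A_6\cong\PSL(2,9)$, already present in~(\ref{almost simple odd case 1}). The sporadic groups are a finite inspection of Sylow normalizers and produce nothing new. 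For groups of Lie type in defining characteristic a cyclic Sylow $p$-subgroup forces $\PSL(2,p)$ (the only case whose unipotent radical has order $p$), subsumed in~(\ref{almost simple odd case 1}) via $\PSL(2,5)\cong\PSL(2,4)$; in cross characteristic the element $x$ is semisimple, abelianness of $C_G(x)$ forces its centralizer to be a maximal torus $T$, and the inversion condition forces $N_G(T)/T$ to be generated by an involution inverting all of $T$.

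The main obstacle is exactly this last reduction carried out family by family: outside the rank-one groups, the order-$2$ relative Weyl element never inverts the full centralizer torus --- it fixes the nontrivial subtorus corresponding to its reflecting hyperplane --- so condition~(\ref{cond:involution}) fails, the \emph{only} escape being when that fixed subtorus coincides with the image of the center and so vanishes in the simple quotient, which happens precisely for $\PSL(3,4)$ at $p=5$. Establishing this requires the detailed data of \cite{gls3} on orders of maximal tori, their relative Weyl groups, and the centralizer types of semisimple elements, together with careful bookkeeping of the diagonal, field and graph automorphisms so that the conclusion covers every almost simple $G$ with a given socle $S$ and not merely $S$ itself; this uniform treatment of the outer structure, rather than any single family, is where the real difficulty lies.
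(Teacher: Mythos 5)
Your proposal is correct and follows essentially the same route as the paper: reduce via the criterion that $C_G(\Omega_1(P))$ must be abelian and inverted in its entirety by an involution, then run through the classification, identifying $C_S(P)$ as a self-centralizing maximal torus in cross characteristic and using the automizer/inversion conditions to cut the list down to the rank-one groups plus the $\PSL(3,4)$, $p=5$ exception (where, as you note, the relative Weyl element's fixed subtorus is absorbed by the center). The paper organizes the torus analysis slightly differently --- first using the order-$2$ automizer condition to restrict to rank one, $\PSL(3,q)$, and $\PSU(3,q)$, and only then applying full inversion to bound $(q-1)/(3,q-1)$ --- but this is the same argument in substance.
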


\begin{proof}   We begin by supposing that $G$ is an $\Oo$-group. We can also suppose that
(\ref{norm equals cent case 5}) does not hold.  Therefore $p$ divides the order of $G$, and by Lemma \ref{two cases odd},
a $p$-Sylow $P$ of $G$ is cyclic and $N_G(P)/C_G( P ) $ has order $2$, where
there is an involution $\tau \in G$ which inverts $P$.
 Then $S$ is also an $\Oo$-group.

 Let us show that $S$ contains $P$.  Let $H = SP$. Then $H$ is an $\Oo$-group since $H \subset G$.
 If $C_H(P) = N_H( P )$  then by Burnside's normal $p$-complement theorem, $H = O_{p'}(H) P$.
 Since $P$ is cyclic, and $S$ a  non-abelian simple group, the image of $S$ in $H/O_{p'}(H)$
 is trivial, so $S \subset O_{p'}(H)$.  But $S = F^*(G)$ is normal in $G$, so $H = SP = O_{p'}(H)P$
 implies $S = O_{p'}(H)$.  By Lemma \ref{lem:solv}, $O_{p'}(G)$ is solvable.
 But $S$ is normal in $G$ and $S = O_{p'}(H)$ has order prime to $p$, so $S \subset O_{p'}(G)$.
 This is a contradiction because $S$ is not solvable.   Therefore, $C_H(P) \ne N_H( P )$
 so $N_H( P ) $ is generated by $C_H( P )$ and an involution $\tau \in N_H( P )$
 which inverts $P$ by Corollary \ref{cor:OddCriterion}.  Since $p$ is odd, this means $P \subset [H,H]$, and $[H,H] = S$
 since $H = SP$.  Thus $P \subset S = H$ and $N_S( P ) \ne C_S( P )$.  In particular,
 case (\ref{norm equals cent case 5}) does not hold when $G$ is replaced by $S$;  we will need this fact later.

We now first
consider the case that $S=G$.
In particular,  Lemmas \ref{two cases odd} and \ref{basic1}  shows that if
$1 \ne x \in P$,  then $|x^S \cap P|  = 2$,
where $x^S$ is the set of $S$-conjugates of $x$.
If $S$ is a sporadic group,  this can never happen  (see
\cite[5.3]{gls3}).  If $S$ is the alternating group $A_n$,  then an element
of order $p$ has at least $(p-1)/2$ conjugates,  whence $p \le 5$.
Since $P$ is cyclic,  this forces $n=5$ for $p=3$.
Then $S=A_5 \cong \PSL(2,4)$.   If $p=5$ and $n \ge 7$,  then
a $p$-cycle is  rational  (i.e.\ is conjugate to all its non-trivial
powers).  So $n=5$ or $6$.  Thus,  $S=G=\PSL(2,q)$ with $q=4$ or $9$.
So if $S = A_n$ we are in case (\ref{almost simple odd case 1}).

So we may assume that $S$ is a simple finite group of Lie type
in characteristic $r$.  If $p = r$,  then $S=\PSL(2,p)$  (since
the Sylow $p$-subgroup is cyclic and so in particular there is a most
one  root subgroup).  An element of order $p$ is conjugate to $(p-1)/2$
of its powers,  whence $p \le 5$.  Since $S$ is simple,  $p \ge 5$ and
so $p=5$ is the only possibility.  Then $S=G=\PSL(2,5)  \cong \PSL(2,4)$
and we are in case (\ref{almost simple odd case 1}).

So assume that $p \ne  r$.
We require some facts about maximal tori in the finite groups
of Lie type.  See  \cite[\S 14]{St311} and \cite[II.1]{ss} for details.  Since $P$ is cyclic and its centralizer
is abelian,  it follows that $C_S(P)$ is a maximal torus $T$ and
$P$ is generated by a semi-simple  regular element.   In particular,
$T$ is a non-degenerate maximal torus.  These are parametrized
by  (twisted)  conjugacy classes of elements in the Weyl group $W$
of $S$.  Moreover,  $N_G(T)/T$ is isomorphic to the centralizer
of this element in the Weyl group and has order $2$.
Inspection of the Weyl groups  shows that this forces
$S$ to be either a  rank one Chevalley group  (i.e.  $S \cong \PSL(2,q)$,
$\PSU(3,q)$,  $\Sz(q)$ or $\Ree(q)$)  or else $S = \PSL(3,q)$.

In the case that $S = \PSL(3,q)$,  $T$ must have an irreducible $2$-dimensional
subspace in the natural $3$-dimensional module.
There is an element $s \in N_S( P )$ which acts Frobenius on
$T$.  Hence $C_T(s)$ is cyclic of order  $(q-1)/(3,q-1)$.  But $s$ also
acts by inversion on $T$, by Lemma \ref{basic1}.  Hence  $(q-1)/(3,q-1) \le 2$,
whence $q=2,4$ or $7$.   Moreover,  $p|(q+1)$.  We can't have $q = 7$
since $p$ is odd.  If $q = 2$, then $S = \PSL(3,2) = \PSL(2,7)$, as
in case (\ref{almost simple odd case 1}). Finally, if $q=4$ then $p=5$ as
in case  (\ref{almost simple odd case 4}).

Now consider the  rank $1$ Chevalley groups.  The only maximal torus  in
the Suzuki or Ree groups with automizer of order $2$ is the
quasi-split torus  (i.e.  the torus contained in a Borel subgroup
and so the torus has order $q-1$;  note that this  rules
out the group $\Ree(3)'$).  Note that since $P \subset T$ this
means $p|(q-1)$.  The other conditions on $p$ and $q$
for the Suzuki and Ree groups follow from the fact that $S$
is simple, so these cases fall into Cases (\ref{almost simple odd case 2})
and (\ref{almost simple odd case 3}).  For the group $\PSL(2,q)$, we must have
$q > 3$ and $p|(q^2 - 1)$ as in case (\ref{almost simple odd case 3}).
If $S=\PSU(3,q)$,  we argue as above in the case of $\PSL(3,q)$
to see that $q =5$ and $p|(q+1)$.  However,  the Sylow $3$-subgroup
is not cyclic,  a contradiction.

We now continue to suppose $G$ is an $\Oo$-group, but we drop the
assumption that $G = S$.  The above arguments show that $S$ must
be as (\ref{almost simple odd case 1}) - (\ref{almost simple odd case 4}), since $S$ is an $\Oo$-group and we proved earlier that (\ref{norm equals cent case 5}) cannot hold for $S$.
Since $P \subset S$ is a $p$-Sylow of $S$,  Sylow's Theorems and the Frattini argument
imply that $G=SN_G(P)$.

Since $S$ is not in case (\ref{norm equals cent case 5}),
$|N_S(P)/C_S( P )| = 2$ by Lemma \ref{two cases odd} and
there is an involution $\tau$ of $N_S(P)$ which acts by inversion on $P$.
If $\beta$ is in $N_G( P )$ then either $\beta \in C_G( P ) $ or $\tau \beta \in C_G( P )$.
So since $\tau \in S$ and $G = S N_G(P)$ we conclude that  $G=SC_G( P ) $.
Since $\tau \in S$ and $S$ is normal in $G$, we have we have
$[\tau ,C_G( P )]  \le S$.  Because $\tau$ inverts $C_G(P)$, we have $\tau c \tau^{-1} c^{-1} = c^{-2}$
for $c \in C_G( P ) $ so all  squares in  $C_G(P)$ are
contained in $S$.  Thus, $G/S$ is an elementary abelian $2$-group.

Let us prove that $G =S$ in cases  (\ref{almost simple odd case 2})  and  (\ref{almost simple odd case 3}).    Since $S = F^*(G)$ is a non-abelian simple group, we know  
$S \subset G \subset \mathrm{Aut}(S)$.
However, for the $S$ in (\ref{almost simple odd case 2}) and (\ref{almost simple odd case 3}), 
$\mathrm{Aut}(S)/S = \mathrm{Out}(S)$ has odd order,
so the $2$-group $G/S$ is trivial and $G = S$ and we have already treated this case.

Suppose $S  = \PSL(2,q)$ as in case  (\ref{almost simple odd case 1}).  We have shown
$G = S C_G(P)$ and that $S\subset G \subset \mathrm{Aut}(S)$, with $G/S$ an elementary
abelian $2$-group.  Further $P \subset S$ is cyclic of order $p^n$ for some odd prime $p$,
and $p$ is prime to $q$. Therefore a generator $g$ of $P$ lifts to an element $\tilde{g} \in \SL(2,q)$
of order $p^n$, and $\tilde{g}$ is semi-simple with distinct eigenvalues.  Thus the sub-algebra $A$
of $\mathrm{Mat}_2(q)$ generated over $\mathbb{F}_q$ by $\tilde{g}$ is \'etale of dimension $2$,
and a maximal torus $T$ containing $P$ is the elements of $A$ of norm $1$ to $\mathbb{F}_q$.
Thus $C_G(P) = C_G(T)$.  Only an outer automorphism of  $S = \PSL(2,q)$
coming from conjugation by an element of $\PGL(2,q)$ centralizes
$T$. Since $G = S C_G( P ) = S C_G(T)$,
$G=\PGL(2,q)$ or $G = \PSL(2,q)$ are the only possibilities.

Suppose now that $S = \PSL(3,4)$ is as in  case  (\ref{almost simple odd case 4}).  We know $\aut(S)/S$
is the dihedral group of order $12$, and $S \subset G \subset \aut(S)$.  Since $G = S C_G( P)$,
we find that $|G/S| =1,2$ or $4$.  This completes the proof of `only if' part of Theorem \ref{simple-odd}.

As for the `if' part of the proof,  it is  straightforward to see that if $G$ is
any of the groups listed in (\ref{almost simple odd case 1}) - (\ref{almost simple odd case 3}) of the theorem, then  the normalizer of any  non-trivial
$p$-subgroup is dihedral. Therefore $G$ is an $\Oo$-group.  With the above notation we
showed groups for which (\ref{norm equals cent case 5}) is true are $\Oo$-groups by Lemma \ref{easy odd case}.

Suppose now
that $G$ is as in case (\ref{almost simple odd case 4}), so $S =\PSL(3,4)$ and $[G:S]=1,2$ or $4$ with $p=5$.
Then $S \subset G \subset \aut(S)$ and  $\aut(S)/S$
is the dihedral group of order $12$. Thus, up to conjugation, we may assume
that $G \le  H:=\langle S, x, y \rangle$ where $x$ induces transpose inverse on $S$ and
$y$ induces the Frobenius automorphism of order $2$ on $S$.
Every Sylow $5$-subgroup
of $H$ is of order $5$ and contained in $S$.  By counting elements of order $5$ in $S$
and using the Sylow theorems,
one finds that $N_S(P)$ is dihedral of order $10$, $N_H(P)$ has order $40$, and $N_H(P)$
surjects onto the Klein four group $H/S$.  To show $H$ is an $\Oo$-group,
it will suffice to show every element of $N_H(P)$ not in  $P$ has order $2$.
Each such element has the form $sz$ for some $z \in \{e,x,y,yx\}$ and $s \in S$.
If $z = e$ then $s$ has order $2$ since $N_S(P)$ is dihedral of order $10$.
Suppose now that $z \ne e$, so that $(sz)^2 = szsz = s s'$ where $s' = z s z =z s z^{-1}$
is the image of $s$ under the involution of $S$ corresponding to $z$.
We can realize $P$ as a subgroup
of $\PSL(2,4) = \SL(2,4)$ inside $\PSL(3,4)$ via the the embedding of $\SL(2,4)$
into $\SL(3,4)$.  We can furthermore arrange that $xPx^{-1} = P$ while
$yPy^{-1}$ is a different $5$-Sylow subgroup of $\PSL(2,4)$ embedded into $\PSL(3,4)$.
Since all $5$-Sylow subgroups of $\SL(2,4)$ are conjugate within $\SL(2,4)$,
we conclude that any $s$ as above must also lie in $\SL(2,4)$ inside $\PSL(3,4)$.
One then calculates using matrices that in fact $(sz)^2 = s s' = e$ for all such $s$, which completes
the proof of the `if' direction of Theorem \ref{simple-odd}.
\end{proof}

\begin{theorem} \label{main odd second case}
Let $G$ be an $\Oo$-group with Sylow $p$-subgroup $P$, such that $N_G(P)  \ne C_G(P)$.
Let $R=O_{p'}(G)$.
Then either
$G=RD$ with $D$ dihedral of order $2p^a$
for some $a > 0$ or else $G/R \cong S$,  where $S$ is an almost simple
group given
in the previous theorem.
\end{theorem}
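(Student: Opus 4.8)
The plan is to pass to the quotient $\bar G := G/R$ and pin down its generalized Fitting subgroup $F^*(\bar G)$, which I expect to be forced into exactly one of two shapes: a cyclic $p$-group (the dihedral case) or a nonabelian simple group (the almost simple case). First I would check that $\bar G$ inherits every hypothesis in play. By Lemma~\ref{lem:sections} it is again an $\Oo$-group; since $R$ has order prime to $p$, the restriction of $G\to\bar G$ to $P$ is injective, so the Sylow $p$-subgroup $\bar P$ of $\bar G$ is the isomorphic image of $P$, hence cyclic and nontrivial. An involution $\tau\in N_G(P)\smallsetminus C_G(P)$ inverting $P$ (Lemma~\ref{basic1}) has image $\bar\tau$ inverting $\bar P$, so $N_{\bar G}(\bar P)\ne C_{\bar G}(\bar P)$ and Lemmas~\ref{two cases odd} and~\ref{basic1} apply to $\bar G$; in particular $C_{\bar G}(\bar Q)=C_{\bar G}(\bar P)$ is abelian, where $\bar Q\le\bar P$ has order $p$. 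I would also record the standard fact $O_{p'}(\bar G)=1$: the preimage in $G$ of a normal $p'$-subgroup of $\bar G$ is a normal $p'$-subgroup of $G$, hence lies in $R$.

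Next I would split on whether $E(\bar G)$ is trivial. Suppose $E(\bar G)\ne 1$. If $O_p(\bar G)\ne 1$, then it contains the characteristic subgroup $\bar Q$ of the cyclic group $\bar P$, so $\bar Q\nor\bar G$; since components centralize $F(\bar G)\supseteq\bar Q$, every component lies in the abelian group $C_{\bar G}(\bar Q)$, which is absurd as components are nonabelian. Hence $O_p(\bar G)=1$, and then $F(\bar G)=1$ (using $O_{p'}(\bar G)=1$), so $F^*(\bar G)=E(\bar G)$ and $C_{\bar G}(E(\bar G))=Z(F(\bar G))=1$. Therefore $Z(E(\bar G))\le C_{\bar G}(E(\bar G))=1$, so each component is a nonabelian simple group and $E(\bar G)=L_1\times\cdots\times L_t$. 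As $E(\bar G)\nor\bar G$ and $O_{p'}(\bar G)=1$ we have $p\mid|E(\bar G)|$; but a Sylow $p$-subgroup of $E(\bar G)$ lies in the cyclic group $\bar P$, so exactly one factor, say $L_1$, has order divisible by $p$. Being the unique $p$-divisible component, $L_1$ is preserved by every conjugation, i.e. $L_1\nor\bar G$, while $L_2\cdots L_t$ is then a normal $p'$-subgroup and hence trivial. Thus $E(\bar G)=L_1$ is simple with $C_{\bar G}(L_1)=1$, so $\bar G$ is almost simple and Theorem~\ref{simple-odd} identifies $\bar G=G/R$ with one of the listed groups $S$.

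In the complementary case $E(\bar G)=1$ we get $F^*(\bar G)=O_p(\bar G)$, a cyclic $p$-group with $C_{\bar G}(O_p(\bar G))=O_p(\bar G)$; since $\bar P$ is abelian and centralizes $O_p(\bar G)\le\bar P$, it follows that $\bar P\le C_{\bar G}(O_p(\bar G))=O_p(\bar G)$, so $\bar P=O_p(\bar G)\nor\bar G$. Then Lemma~\ref{two cases odd} gives $|\bar G:\bar P|=|N_{\bar G}(\bar P):C_{\bar G}(\bar P)|=2$ with $\bar\tau$ inverting $\bar P$, whence $\bar G\cong D_{2p^a}$. To realize this as $G=RD$, I would set $D:=\langle P,\tau\rangle$, which is dihedral of order $2p^a$ because $\tau$ inverts the cyclic $p$-group $P$. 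Here $R\cap D=1$: a nonidentity rotation lies in $P$, which meets the $p'$-group $R$ trivially, while each reflection maps to a nontrivial reflection of $\bar G\cong D_{2p^a}$ and so is not in $R$. Comparing orders with $|G/R|=2p^a$ then yields $G=RD$ with $D$ dihedral of order $2p^a$, as desired.

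I expect the main obstacle to be the almost simple case, and within it the control of the components of $\bar G$: the argument hinges on playing the cyclicity of $\bar P$ (which caps the number of $p$-divisible components at one) against the vanishing of $O_{p'}(\bar G)$ and $O_p(\bar G)$ (which eliminates the remaining $p'$-components and the Fitting part). Securing $O_p(\bar G)=1$ is where the $\Oo$-hypothesis is used most essentially, through the abelianity of $C_{\bar G}(\bar Q)$ coming from Lemma~\ref{basic1}; once that is established, the collapse to a single simple component and the appeal to Theorem~\ref{simple-odd} are formal.
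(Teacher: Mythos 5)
Your proposal is correct and follows essentially the same route as the paper: both split according to whether $G/R$ has a nontrivial normal $p$-subgroup (your condition $E(G/R)=1$ is equivalent to this once $O_{p'}(G/R)=1$ is recorded), and both use $F^*(G/R)=E(G/R)$ together with the cyclicity of the Sylow $p$-subgroup to collapse to a single nonabelian simple component before invoking Theorem~\ref{simple-odd}. The only real divergence is organizational: in the dihedral case the paper runs a Frattini argument inside $G$ itself to show that the $p'$-part of $C_G(P)$ lies in $R$, whereas you read off $\bar P=O_p(G/R)=C_{G/R}(O_p(G/R))$ from the self-centralizing property of the generalized Fitting subgroup and then lift the dihedral complement; both arguments are sound.
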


\begin{proof}
First suppose that $G/R$ has a non-trivial normal $p$-subgroup.  Let
$Q$ be the subgroup of order $p$ in $P$. Then $RQ/R$ is a subgroup
of order $p$ in a (cyclic, normal, non-trivial) $p$-subgroup of $G/R$ so $RQ/R$ is normal
in $G/R$. Now $RQ$ is normal in $G$, so the Frattini argument shows
$G = R N_G(Q)$.  Since $N_G(Q)$ contains $C_G(Q) = C_G(P)$ with index $2$,
this means $R C_G(P)$ is normal of index $1$ or $2$ in $G$.  We have shown
$C_G(Q) = C_G(P) = A \times P$ where $A$ is an abelian subgroup of order prime to $p$
and the involution $\tau$ of $N_G(Q)$ acts by inversion on $C_G(Q)$.  This implies
that $RA$ is stable under conjugation by $P$ and by $\tau$, so $RA$ is in fact
normal in $G$. Thus $A \subset R = O_{p'}(G)$.  Let $D$ be the dihedral group
of order $2|P|$ generated by $\tau$ and $P$.  We conclude that $G = RD$
is the semi-direct product of $R$ and $D$, since $D \cap R$ must be a normal
$p'$-subgroup of $D$, which forces $D \cap R$ to be trivial.

So we may assume that $G/R$ has no non-trivial normal $p$-subgroup.
By Lemma \ref{lem:sections},
we may pass to $G/R$ and assume that $R=1$.
Thus every normal subgroup of $G$ has order divisible
by $p$ but is not a $p$-group.

In other words,  $O_p(G)=
O_{p'}(G)=1$.  In particular,  $F(G)=1$ and so $F^*(G) = E(G)$ is a direct product
of subnormal quasi-simple subgroups.  Here $O_p(G) = O_{p'}(G) = 1$ implies
$Z(G) = 1$, so all the factors in $E(G)$ are simple and non-abelian of order divisible by $p$.
Since the Sylow $p$-subgroup of $G$ is cyclic,  this implies
that $S:=F^*(G) = E(G)$ is simple.  Thus $S \le G \le \aut(S)$.
The  result now follows by Theorem \ref{simple-odd}.
\end{proof}

Using the above results, we can now prove Theorem \ref{main p odd}:

\begin{proof}[Proof of Theorem \ref{main p odd}]
Suppose first that $G$ is an $\Oo$-group.  By Lemma~\ref{two cases odd}, a Sylow $p$-subgoup $P$ of $G$ is cyclic.
Then $N_G(P) = C_G(P)$ if and only if $G = RP$ by Lemma~\ref{easy odd case}.  Suppose
now that $N_G(P)  \ne C_G(P)$. Then condition~(\ref{case norm unequal cent}) 
of Lemma~\ref{two cases odd} holds for
the group $Q = P$.  This means that there is an involution $\tau$ which acts
by inversion on the abelian group $C_G(P)$.  Here $P \subset C_G(P)$ and
$P$ is cyclic of odd order, so $\tau$ acts non-trivially by inversion on every
non-trivial subgroup $Q$ of $P$.  This means that condition~(\ref{case norm unequal cent}) of Lemma~\ref{two cases odd} holds for all such $Q$.  We have now shown that if $G$ is an $\Oo$-group
then condition (\ref{first odd case}) or (\ref{second odd case}) of Theorem~\ref{main p odd} must hold.
Conversely, if a $p$-Sylow subgroup $P$ of $G$ is cyclic and one of
conditions (\ref{first odd case}) or (\ref{second odd case}) of Theorem~\ref{main p odd} holds,
then $G$ is an $\Oo$-group by Corollary \ref{cor:OddCriterion}.
The last statement of Theorem \ref{main p odd} follows immediately
from  Lemma \ref{lem:solv}  and Theorem \ref{main odd second case}.
\end{proof}

\smallskip

In general,  it seems difficult to give a more detailed classification of $\Oo$-groups for $p$ odd, i.e.\ to say much more than that $D$ acts on
the solvable group $R$ with $C_R(P)$ abelian and inverted by $D$
(with notation as in Theorem~\ref{main odd second case}).

\begin{problem}  Classify the groups $R$ admitting such an action
by a dihedral group of order $2p^a$.
\end{problem}

\section{$\Oo$-groups for the prime $2$} \label{char 2 sect}

In this
section,  we characterize $\Oo$-groups in characteristic
$2$  ---  i.e.\ those finite groups $G$ such that every cyclic-by-$2$ subgroup
$H$ of $G$ is either cyclic,  a dihedral
$2$-group or isomorphic to $A_4$.  We begin by proving Theorem~\ref{Oort 2 Sylow} and
conclude by proving Theorem~\ref{main2}.

\begin{proof}[Proof of Theorem \ref{Oort 2 Sylow}]
If $G$ is an $\Oo$-group,  then $P$ must be either cyclic or dihedral.
The group $R = O(G) = O_{2'}(G)$ is solvable by the Feit-Thompson Theorem.   First suppose that
$P$ is cyclic. Burnside's normal $p$-complement theorem
says that $R$ is a normal complement to $P$ in $G$ if $P$ is central in its normalizer $N_G(P)$.
If $\sigma \in N_G(P)$ has order prime to $p$, then conjugation by $\sigma$ must induce
the trivial automorphism of $P$ since $\mathrm{Aut}(P)$ is a $2$-group.  Thus
$\sigma$ centralizes $P$, and  $G = RP$  follows.

Now suppose $P$ is dihedral.  Then $P$ contains a Klein four subgroup $K$.
For each such $K$, there can be no subgroup of $G$ of the form $K \times X$ with $X$ non-trivial, 
since $G$ is an $\Oo$-group.
Since $C_P(K)=K$,
it  follows that $C_G(K)=K$.

So now assume conversely that either $P$ is cyclic, or else $P$ is dihedral
and that each elementary abelian subgroup $K \le P$ of order $4$ satisfies $C_G(K) = K$.

Let  $H \le G$ be a cyclic-by-$2$ subgroup; i.e.\ $H/O_2(H)$ is cyclic of odd order.  We may assume that
$O_2(H)  \le P$.   Then $O_2(H)$ is either
cyclic or dihedral.  In order to show that $G$ is an $\Oo$-group, we want to show that $H$ is either cyclic, or a dihedral $2$-group, or is isomorphic to $A_4$.

Suppose first that $O_2(H)$ is not an elementary abelian subgroup
of order $4$.  Then the automorphism group of $O_2(H)$ is a $2$-group,
whence $H = O_2(H)  \times O(H)$ with $O(H)$ cyclic.  If $O_2(H)$
is cyclic,  then $H$ is cyclic.  If $O_2(H)\le P$ is not cyclic,  then
$O_2(H)$ contains a Klein four subgroup $K$.  But then
$O(H)$ centralizes $K$, whence $O(H)=1$ because we have assumed
$C_G(K)= K$.  Therefore $H = O_2(H)$ is a cyclic or dihedral
$2$-group.

Finally, suppose $K:=O_2(H)$ is elementary abelian
of order $4$.  Then $H$ is contained in the normalizer $N_G(K)$, and $N_G(K)/K = N_G(K)/C_G(K)$ embeds in $\aut(K)  \cong
S_3$.  This implies  $H=K$ or $A_4$ since $K = O_2(H)$ is the $2$-Sylow subgroup of $H$.  So again $H$ is of the permissible form, concluding the proof.
\end{proof}

\begin{remark} 
\ (\ref{Oort 2 cyclic}) \  
In Theorem~\ref{Oort 2 Sylow}(\ref{Oort 2 cyclic}), 
there is also a more elementary proof that $G = RP$ if $P$ is cyclic when $p = 2$.
Namely, embed $G$ in $S_n$  via the  regular representation, and
observe that $G$ does not embed in $A_n$.  So $G$ contains
a normal subgroup of index $2$.  The assertion follows by induction.

(\ref{Oort 2 dihedral}) \  
In Theorem~\ref{Oort 2 Sylow}(\ref{Oort 2 dihedral}),
one must have
the condition for {\em all} elementary abelian Klein $4$-subgroups,
not just a single one.  This can be seen by
considering the semi-direct product $G=AD_8$, where $A$ is abelian of odd order and
$D_8$ acts by inversion on $A$ with the kernel of the action being
an elementary abelian subgroup $K$ of order $4$.  Here 
the other Klein four subgroup of $D_8$ is self-centralizing in $G$, but $K$ is not, 
and $G$ is not an $\Oo$-group.
\end{remark}

In order to prove Theorem~\ref{main2}, we now further study the structure of $\Oo$-groups in characteristic $2$.

If the $2$-Sylow subgroup $P$ of an Oort group $G$
is not cyclic, then $P$ is dihedral, as in Theorem~\ref{Oort 2 Sylow}.
In this situation,
we next investigate the center of $G$.

\begin{lemma} \label{nontrivial center lemma}
Let $G$ be an
$\Oo$-group in characteristic $2$ with a non-cyclic Sylow $2$-subgroup $P$,
and set $R=O(G)$.
Assume that $Z(G)  \ne 1$.  Then $|Z(G)|=2$ or $4$,
and $G$ is solvable.  Moreover
one of the following holds:
\begin{enumerate}
\item \label{nontrivial center 1}
$G$ is elementary abelian of order $4$.
\item \label{nontrivial center 2}
$|Z(G)| =2$,  $R$ is abelian and $G=RP$.
Moreover $C_P(R)$ is a cyclic subgroup of index $2$ in $P$,
and all elements of $P$ not in $C_P(R)$ induce inversion on $R$.
\end{enumerate}
\end{lemma}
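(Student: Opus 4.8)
The plan is to extract all the structure from the two hypotheses that $G$ is an $\Oo$-group with non-cyclic (hence, by Theorem~\ref{Oort 2 Sylow}, dihedral) Sylow $2$-subgroup $P$, and that $Z(G) \ne 1$. Throughout I would use the key consequence of Theorem~\ref{Oort 2 Sylow} that $C_G(K) = K$ for every Klein four subgroup $K \le P$.

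First I would pin down $Z(G)$. It is a $2$-group: if $z \in Z(G)$ had nontrivial odd order, then for a Klein four subgroup $K \le P$ the central element $z$ would centralize $K$, forcing $z \in C_G(K) = K$, which is absurd. Being a normal $2$-group, $Z(G)$ lies in every Sylow $2$-subgroup, so $Z(G) \le P$, and being central $Z(G) \le Z(P)$. Now $|Z(P)| = 4$ exactly when $P$ is elementary abelian of order $4$, and $|Z(P)| = 2$ for every larger dihedral $P$; hence $|Z(G)| \in \{2,4\}$. If $|Z(G)| = 4$ then $P = Z(G)$ is Klein four, so $C_G(P) = C_G(K) = K = P$, while $P$ central gives $G = C_G(P) = P$; this is case~(\ref{nontrivial center 1}).

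Next, assuming $|Z(G)| = 2$, I would obtain solvability and $G = RP$ from the Gorenstein--Walter classification of finite groups with dihedral Sylow $2$-subgroup. The image in $G/O(G)$ of the central involution is a nontrivial central $2$-element; but the nonsolvable possibilities for $G/O(G)$ --- namely $A_7$ and the groups with $\PSL(2,q) \le G/O(G) \le \mathrm{P\Gamma L}(2,q)$ for odd $q$ --- all have trivial center. Hence $G/O(G)$ must be a $2$-group, necessarily isomorphic to $P$ since $R$ has odd order and so $P \cap R = 1$. Therefore $G = RP$, and $G$ is solvable (using Feit--Thompson for the solvability of $R$).

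Finally I would read off the action of $P$ on $R = O(G)$. Write $P = \langle \rho, s\rangle$ dihedral with rotation subgroup $\langle\rho\rangle$ of index $2$ and central involution $z$ (or the analogous splitting when $P$ is Klein four). Since $z \in Z(G)$, it centralizes $R$. For any reflection $t$, the subgroup $K = \langle z, t\rangle$ is Klein four, so $1 = C_R(K) = C_R(z) \cap C_R(t) = C_R(t)$; thus every reflection acts on the odd-order group $R$ as a fixed-point-free involution. By the classical theorem on fixed-point-free involutions, each such $t$ inverts $R$ and $R$ is abelian. Since $R$ is abelian and inverted by every reflection, every rotation (a product of two reflections) centralizes $R$, so $C_P(R) = \langle\rho\rangle$ is cyclic of index $2$ and every element of $P \smallsetminus C_P(R)$ inverts $R$; this is case~(\ref{nontrivial center 2}). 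I expect the solvability step to be the main obstacle: it genuinely requires the dihedral Sylow $2$-subgroup classification, together with the observation that the nonsolvable candidate quotients have trivial center so that no nontrivial central $2$-element can survive in them.
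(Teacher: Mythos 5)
Your proof is correct, and most of it runs parallel to the paper's: the bound $|Z(G)|\le 4$ via $Z(G)\le C_G(K)=K$, the identification of case~(\ref{nontrivial center 1}) when $|Z(G)|=4$, and the final step that a non-central involution $t$ with $C_R(t)=C_R(\langle z,t\rangle)=1$ inverts the odd-order group $R$ and forces it to be abelian (the paper proves this last fact by the counting argument $r\mapsto rtr^{-1}$ rather than citing the classical fixed-point-free involution theorem, but that is the same content). The one genuinely different step is how you get solvability and $G=RP$. You invoke the Gorenstein--Walter classification of groups with dihedral Sylow $2$-subgroups and observe that every non-$2$-group possibility for $G/O(G)$ (namely $A_7$ and the groups between $\PSL(2,q)$ and ${\rm P}\Gamma{\rm L}(2,q)$) has trivial center, whereas the nontrivial central involution of $G$ survives in $G/O(G)$; this is a clean use of the hypothesis $Z(G)\ne 1$ but leans on the full classification theorem. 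The paper instead argues locally: since $Z(G)$ of order $2$ lies in \emph{every} Klein four subgroup, the automizer $N_G(X)/C_G(X)$ of every nontrivial $2$-subgroup $X$ is a $2$-group, and Thompson's normal $p$-complement theorem then gives $G=RP$ directly, with solvability from Feit--Thompson. Both routes are valid (and Gorenstein--Walter is used elsewhere in the paper anyway, in Lemma~\ref{nonsolv}); the paper's version is the lighter tool for this particular lemma, while yours makes the role of the central involution more transparent.
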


\begin{proof} Let $K$ be an elementary abelian subgroup of order $4$
contained in $P$.   Since $C_G(K)=K$ by Theorem~\ref{Oort 2 Sylow},
we see that $Z(G)  \le K$.  This proves the first statement, on the order of $Z(G)$.
Note that $|Z(P)|=2$ unless $P=K$.  So if $|Z(G)|=4$,  then
$G=C_G(K) = K$ and so (\ref{nontrivial center 1}) holds.

The remaining case is that
$Z(G)$ has order $2$.
This condition implies that $Z(G)$ is contained in every elementary
abelian subgroup of order $4$ of $G$.

We claim that this implies that $N_G(X)/C_G(X)$ is a $2$-group for $X$
any non-trivial $2$-subgroup of $G$.  If $X$ is cyclic or dihedral
of order greater than $4$,  this is clear since the automorphism
group of $X$ is a $2$-group.   If $X$ is elementary  abelian of
order $4$,  then $X \cap Z(G)  \ne 1$.  The unique non-trivial
element $\sigma$ of $X \cap Z(G)$ must be fixed by conjugation by every
element of $N_G(X)$. We conclude that $N_G(X)/C_G(X)$ embeds
into the group of automorphisms of the Klein four group $X$ which fix $\sigma$.
This implies  $N_G(X)/C_G(X)$ is a $2$-group.

Thus by Thompson's normal $p$-complement theorem \cite[Theorem 2.27]{Suz}
$G = RP$.  Since $C_G(K) = K$,  $K$ acts without non-trivial fixed points on $R$.  Therefore if $x$ is a
non-central involution in $K$, the map which sends $r \in R$ to $rxr^{-1} \in Rx$ is injective.
Since $|R| = |Rx|$, every element of $Rx$ is of the form $rxr^{-1}$ for some $r \in R$.
But $(rxr^{-1})^2 = e$, so every element of $Rx$ is an involution.  This implies conjugation
by $x$ is inversion on $R$, so $R$ is abelian.

If $P=K$,  then $C_P(R) = Z(G)$, a cyclic subgroup of index $2$ in $P$.
Condition (\ref{nontrivial center 2}) then holds.

On the other hand, if $K$ is a proper subgroup of $P$,  then there are other elementary abelian
subgroups of order $4$,  and the same analysis applies to each of them.  It follows
that every involution in $P \smallsetminus{Z(P)}$ induces inversion on
$R$,  whence the product of any two such involutions centralizes $R$.
Hence condition (\ref{nontrivial center 2}) again holds.
This completes the proof.
\end{proof}

The remaining case to treat is that $Z(G)=1$.    We first point out
some  restrictions on $R$.

\begin{lemma} \label{restrictions trivial center}
Let $G$ be an
$\Oo$-group for $p=2$ with trivial center and with a non-cyclic Sylow $2$-subgroup $P$.  Set $R=O(G)$.
\begin{enumerate}
\item \label{nilp commutator}
$[R,R]$ is nilpotent.
\item \label{no A4}
If $A_4$ is not a subgroup of $G$, then $G = RP$ with $C_R(K)=1$, where $K \le P$ is elementary abelian of order $4$.
\end{enumerate}
\end{lemma}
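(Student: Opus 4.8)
The plan is to exploit that $P$ is dihedral (Theorem~\ref{Oort 2 Sylow}) and to fix a Klein four subgroup $K = \{1,a,b,c\} \le P$. By Theorem~\ref{Oort 2 Sylow} we have $C_G(K) = K$, and since $|R|$ is odd this gives $C_R(K) = R \cap C_G(K) = R \cap K = 1$; thus $K$ acts coprimely and fixed-point-freely on $R$. For part~(\ref{nilp commutator}) I would first apply the standard coprime-action factorization for the involution $a$ to write $R = C_R(a)\,[R,a]$ with $C_R(a) \cap [R,a] = 1$. Since $C_{[R,a]}(a) \le C_R(a) \cap [R,a] = 1$, the involution $a$ acts fixed-point-freely on the odd-order group $[R,a]$, hence inverts it, so $[R,a]$ is abelian. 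Likewise $b$ normalizes $C_R(a)$ and $C_{C_R(a)}(b) = C_R(K) = 1$, so $b$ inverts $C_R(a)$ and $C_R(a)$ is abelian as well. Thus $R = C_R(a)\,[R,a]$ is a product of two abelian subgroups, and It\^o's theorem gives that $[R,R]$ is abelian, in particular nilpotent.

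For part~(\ref{no A4}), assume $A_4 \not\le G$. The plan is to verify the hypothesis of Frobenius's normal $p$-complement theorem at $p=2$, namely that $N_G(X)/C_G(X)$ is a $2$-group for every nontrivial $2$-subgroup $X \le G$; the resulting normal complement is then $O_{2'}(G) = R$, giving $G = RP$. Since every such $X$ is conjugate into the dihedral group $P$, it is cyclic, dihedral of order $\ge 8$, or elementary abelian of order $4$, and $N_G(X)/C_G(X)$ is unchanged under conjugation. If $X$ is cyclic or dihedral of order $\ne 4$, then $\Aut(X)$ is itself a $2$-group and there is nothing to prove. If $X \cong C_2 \times C_2$, then, after conjugating into $P$, Theorem~\ref{Oort 2 Sylow} gives $C_G(X) = X$, so $N_G(X)/C_G(X) = N_G(X)/X$ embeds into $\Aut(X) \cong S_3$; an element of order $3$ in this image together with $X$ would generate a subgroup $X \rtimes C_3 \cong A_4 \le G$, contradicting the assumption, so $N_G(X)/C_G(X)$ is again a $2$-group. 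Frobenius's theorem then yields $G = RP$, and $C_R(K) = 1$ was already noted above.

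The one subtle point is part~(\ref{nilp commutator}): the naive hope that $R$ is nilpotent is false in general, so the argument must instead produce a decomposition into abelian pieces. The key observation that unlocks this is that both factors $C_R(a)$ and $[R,a]$ are forced to be abelian --- the first because the fixed-point-free condition $C_R(K) = 1$ makes a second involution of $K$ invert it, and the second because $a$ itself acts fixed-point-freely on $[R,a]$ --- after which It\^o's theorem does the rest. In part~(\ref{no A4}) the essential input is simply that the absence of $A_4$ removes the only possible source of an order-$3$ automorphism of a Klein four subgroup, making every relevant automizer a $2$-group; once that is established, the normal $2$-complement theorem is routine.
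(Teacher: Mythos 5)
Your part~(\ref{no A4}) is correct and is essentially the paper's argument: one checks that $N_G(X)/C_G(X)$ is a $2$-group for every nontrivial $2$-subgroup $X$ (using $C_G(K)=K$ together with the absence of $A_4$ to rule out an automizer of order divisible by $3$ for a Klein four subgroup), and then invokes the normal $2$-complement theorem; this is exactly what the paper does by referring back to the proof of Lemma~\ref{nontrivial center lemma}.

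Part~(\ref{nilp commutator}), however, has a genuine gap. The ``standard coprime-action factorization'' is only $R=C_R(a)[R,a]$; the additional claim $C_R(a)\cap[R,a]=1$ is not a general fact (it holds when $R$ is abelian, where the product is direct, but not otherwise), and it actually fails in the present setting. Concretely, let $R$ be extraspecial of order $27$ and exponent $3$, and let $K\le\GL(2,3)$ be the Klein four group of diagonal matrices acting naturally on $R$, so that the three involutions act on $R/Z(R)$ as $\mathrm{diag}(-1,1)$, $\mathrm{diag}(1,-1)$, $-I$ and on $Z(R)$ by their determinants. Then $G=R\rtimes K$ is an $\Oo$-group with trivial center and Sylow $2$-subgroup $K$, and $C_R(K)=1$; but for the involution $c$ acting as $-I$ one has $C_R(c)=Z(R)$ and $[R,c]=R$, so $C_R(c)\cap[R,c]=Z(R)\neq 1$ and $[R,c]$ is nonabelian. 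Hence your deduction that $a$ acts fixed point freely on $[R,a]$, and therefore that $[R,a]$ is abelian, is unjustified and false for some choices of the involution; the appeal to It\^o's theorem collapses with it, and your argument supplies no criterion for selecting an involution for which both factors are abelian. (Had it worked, it would prove the strictly stronger statement that $[R,R]$ is abelian, which should itself raise a flag, since the lemma and Bauman's theorem assert only nilpotence.) The paper avoids all of this: after noting, exactly as you do, that $K$ acts fixed point freely on $R$, it simply cites \cite[Theorem~2]{bauman}, which is precisely the statement that a fixed-point-free Klein four action forces the derived subgroup to be nilpotent. The portion of your argument that \emph{is} correct --- that $C_R(a)$ is abelian because a second involution of $K$ inverts it --- is the standard first step of Bauman's analysis, but it does not by itself yield the conclusion.
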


\begin{proof}
For part (\ref{nilp commutator}) of the lemma, note that
$K$ acts on $R$ without fixed points other than the identity, by case (\ref{Oort 2 dihedral}) of Theorem \ref{Oort 2 Sylow}.  Now apply
\cite[Theorem 2]{bauman}.

For part (\ref{no A4}) of the lemma, we apply Thompson's normal $p$-complement
result as in the proof of Lemma~\ref{nontrivial center lemma}.
\end{proof}

We next consider more specifically the special case that  $R=1$.

\begin{lemma}  \label{nonsolv}
Under the hypotheses of Lemma~\ref{restrictions trivial center},
assume that $R=1$ and $A_4 \le G$.
Then one of the following holds:
\begin{enumerate}
\item $G=A_4$ or $S_4$;  or
\item $G=\PSL(2,q)$ or $\PGL(2,q)$ with $q$ an odd
prime power and $q > 4$.
\end{enumerate}
\end{lemma}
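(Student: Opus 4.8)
\textbf{Proof proposal for Lemma~\ref{nonsolv}.}

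The plan is to exploit the strong structural hypotheses: $R=O(G)=1$, the center $Z(G)=1$, the Sylow $2$-subgroup $P$ is dihedral, $A_4\le G$, and $C_G(K)=K$ for every Klein four subgroup $K\le P$ (from Theorem~\ref{Oort 2 Sylow}). Since $R=O(G)=1$ and $O_2(G)\le C_G(K)=K$ would force $O_2(G)$ to be a $2$-group with the self-centralizing property, I first want to show that the generalized Fitting subgroup $F^*(G)=E(G)$ is a single non-abelian simple group, so that $G$ is almost simple. Indeed, $F(G)=O_2(G)\times O_{2'}(G)$; here $O_{2'}(G)\le O(G)=R=1$, and $O_2(G)$ is a normal $2$-subgroup, hence contained in every Sylow $2$-subgroup and centralized by... more carefully, I would argue that a nontrivial $O_2(G)$ together with $Z(G)=1$ and the dihedral Sylow structure is incompatible with $C_G(K)=K$, forcing $O_2(G)=1$ and thus $F(G)=1$. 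Then $F^*(G)=E(G)$ is a product of non-abelian simple components, and since the Sylow $2$-subgroup $P$ is dihedral (so has a cyclic subgroup of index $2$ and cannot be a nontrivial direct product of two even-order groups), there can be only one component; combined with $C_G(F^*(G))=Z(F(G))=1$, this gives $S:=F^*(G)$ simple and $S\le G\le \Aut(S)$.

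Next I would invoke the classification of finite simple groups with dihedral Sylow $2$-subgroups, which is exactly the older result the introduction says suffices for $p=2$ (the Gorenstein--Walter theorem). This asserts that a non-abelian simple group with dihedral Sylow $2$-subgroup is isomorphic to $\PSL(2,q)$ for $q$ an odd prime power ($q>3$) or to the alternating group $A_7$. So $S$ is one of these. I would then eliminate $A_7$: a Sylow $2$-subgroup of $A_7$ is dihedral of order $8$, and I must check against the condition $C_G(K)=K$ for all Klein four subgroups, or equivalently show that $A_7$ (and its relevant overgroups in $\Aut(A_7)=S_7$) fails to be an $\Oo$-group. The cleanest route is to locate a Klein four subgroup $K$ of $A_7$ whose centralizer strictly contains $K$ — for instance a four-group moved inside a fixed set leaving extra points fixed — contradicting Theorem~\ref{Oort 2 Sylow}; this rules out $S=A_7$.

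With $S=\PSL(2,q)$, $q$ odd, remaining, I would determine which $G$ with $S\le G\le\Aut(S)$ survive. The condition $A_4\le G$ forces $q>3$ (and in fact $q>4$ in the statement, though $q$ is odd here so $q\ge 5$ automatically excludes $q=4$; I keep the stated bound $q>4$ to cover the degenerate small cases uniformly). The outer automorphism group of $\PSL(2,q)$ is generated by the diagonal automorphism (giving $\PGL(2,q)$) and field automorphisms. I would argue that any field automorphism in $G$ would enlarge the Sylow $2$-subgroup beyond dihedral, or else would produce a Klein four subgroup violating $C_G(K)=K$, so that the only admissible overgroups are $G=\PSL(2,q)$ and $G=\PGL(2,q)$; these are precisely the two groups in conclusion~(2), while the genuinely small cases collapse into $A_4$ and $S_4$ of conclusion~(1) (noting $A_4\cong\PSL(2,3)$ and $S_4\cong\PGL(2,3)$, which is why $q>3$ is imposed in case~(2) and these appear separately). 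The main obstacle I anticipate is the bookkeeping of exactly which subgroups $G$ between $\PSL(2,q)$ and $\Aut(\PSL(2,q))$ retain a dihedral Sylow $2$-subgroup \emph{and} satisfy the self-centralizing Klein four condition; this requires a careful case analysis of the action of diagonal versus field automorphisms on the Sylow $2$-structure, and it is here that I would spend the most care, using the explicit description of involution centralizers in $\PGL(2,q)$ to verify $C_G(K)=K$ in the surviving cases and to exclude the rest.
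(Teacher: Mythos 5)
There is a genuine gap at the very first step. You claim that a nontrivial $O_2(G)$ is ``incompatible with $C_G(K)=K$'' given $Z(G)=1$ and the dihedral Sylow structure, and you use this to force $F(G)=1$ and hence $G$ almost simple before invoking Gorenstein--Walter. But this is false: $G=A_4$ and $G=S_4$ satisfy every hypothesis of the lemma ($R=O(G)=1$, $Z(G)=1$, dihedral Sylow $2$-subgroup, $A_4\le G$, and $C_G(K)=K$ for the Klein four subgroup), yet $O_2(G)$ is the normal Klein four group. These are exactly the groups of conclusion~(1), and your route erases them at the outset: once you have forced $F^*(G)$ to be a single non-abelian \emph{simple} group, the Gorenstein--Walter classification cannot return $A_4\cong\PSL(2,3)$ or $S_4\cong\PGL(2,3)$, since $\PSL(2,3)$ is not simple. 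Your closing remark that the small cases ``collapse into'' $\PSL(2,3)$ and $\PGL(2,3)$ does not repair this, because those groups never appear in the list of simple groups with dihedral Sylow $2$-subgroups. The paper instead splits on a minimal normal subgroup $A$ of $G$: if $A$ is a $2$-group, then $Z(G)=1$ forces $|A|>2$, the dihedral Sylow and self-centralizing condition force $A$ to be a Klein four group with $A=C_G(A)$, so $G/A$ embeds in $\Aut(A)\cong S_3$ and is not a $2$-group, giving $G\cong A_4$ or $S_4$; only in the remaining case does one get $F(G)=1$ and proceed to the almost simple analysis.

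The rest of your outline follows the paper's strategy (single component because a dihedral group is not a nontrivial direct product of even-order groups, then Gorenstein--Walter, then elimination of field automorphisms), and your explicit elimination of $A_7$ via a Klein four subgroup with oversized centralizer is a welcome addition that the paper leaves implicit. However, your exclusion of field automorphisms is also only gestured at: it is not enough to say a field automorphism would ``enlarge the Sylow $2$-subgroup or produce a bad $K$,'' since elements of $G\smallsetminus\PGL(2,q)$ need not themselves be field automorphisms. The paper's argument first shows $G$ is generated by involutions (because $P$ and $S$ are), then cites the fact that any involution in $\Aut(\PSL(2,q))$ outside $\PGL(2,q)$ is conjugate to a $q_0$-Frobenius with $q=q_0^2$, whose centralizer contains $\PSL(2,q_0)$ and hence a Klein four group, contradicting $C_G(K)=K$. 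You would need to supply an argument of comparable precision to complete that step.
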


\begin{proof}
Let $A$ be a minimal normal subgroup of $G$.
Then $A$ has even order since $R=1$.  If $A$ is a $2$-group,
then it is elementary abelian.  Since $Z(G)=1$,  $|A| > 2$.  By
case (\ref{Oort 2 dihedral}) of Theorem \ref{Oort 2 Sylow}, one has $C_G(K) = K$
for every Klein four subgroup $K$ of $G$.  So $A$ must be elementary abelian of order $4$
and $A=C_G(A)$.  Therefore
$N_G(A)/A$ embeds in $\aut(A)  \cong \GL(2,2)  \cong S_3$
and is not a $2$-group since $Z(G) = 1$ and $G = N_G(A)$.  Thus,  $G/A \cong \Z/3$ or $S_3$.
It is trivial to see this is a split extension and
$G \cong A_4$ or $S_4$.

Now suppose, on the other hand, that no minimal normal subgroup of $G$
is a $2$-group. Then
$O_2(G)=1$,  and so  $F(G)=1$  (since $R=1$).
Thus $S:=F^*(G)=E(G)$
is a direct product of non-abelian simple groups.  Each factor of $E$
has even order.  Since $S$ is an Oort group at $2$, $S$ has a dihedral
$2$-Sylow subgroup, whence $S$ cannot be a non-trivial direct product.
 Thus, $S$ is simple with a dihedral $2$-Sylow subgroup.  By the classification of simple groups
with dihedral Sylow $2$-subgroups \cite[Theorem 1]{gw}, we
see that $S=\PSL(2,q)$ with $q$ odd and $q > 4$. Note that
since $F^*(G) = S$ we have $S \subset G \subset \mathrm{Aut}(S)$.

Let $P$ be a $2$-Sylow subgroup of $G$ and set $Q= P \cap S$.
Then $Q$ and $P$ are dihedral.  Since elementary abelian subgroups of order $4$
are self-centralizing by Theorem \ref{Oort 2 Sylow}, we see that $C_G(Q) \le Q$.   By Thompson's normal
$p$-complement theorem, or by inspection, $S$ contains a group isomorphic to $A_4$.

By Sylow's theorems, $G= \langle S, N_G(Q) \rangle$.
It follows that either $|Q|=4$ and
$N_G(Q) \cong A_4$, whence $N_G(P) \le S$ and so $G=S$,
or else $P$ is non-abelian and dihedral, whence $N_G(Q)=P$ (since the automorphism
group of $Q$ is a $2$-group).   Since $P$ is dihedral, it is generated by involutions,
as is $S$,
and so $G$ is generated by involutions in $\aut(S)$.

Recall that $\aut(S)$ is generated by $\PGL(2,q)$ and the Frobenius automorphism.
It follows by \cite[7.6]{gl} that any involution $x$ in $\aut(S)$ is either contained
in $\PGL_2(q)$ or $q=q_0^2$ and $x$ is conjugate to the $q_0$-Frobenius automorphism.
In the latter case, $C_S(x)$ contains a copy of $\PSL_2(q_0)$, whence $x$ centralizes
an elementary abelian subgroup of order $4$. Such an $x$ cannot be in $G$
by Theorem \ref{Oort 2 Sylow}.   Thus,
$\PSL(2,q) \le G \le \PGL(2,q)$ with $4 < q$ odd.
\end{proof}

\begin{lemma} \label{modules for PSL(2)}
Let $H= \PSL(2,q)$ or $\PGL(2,q)$ with $q$ odd and $q > 4$.  
Let $F$ be a field of characteristic $r \ne 2$, and let $V$
be an absolutely irreducible $FH$-module.  Suppose that there is 
a subgroup $J$ of $H$ isomorphic to $A_4$ 
such that there are no fixed points for the action of 
the Klein four subgroup $K: = O_2(J) \subset J$ on $V$.
Then the following hold:
\begin{enumerate}
\item \label{PSL2 dim3} $\dim V = 3$;
\item \label{PSL2 involution trace}  every involution $x$ in $J$ has trace $-1$ on $V$;
\item \label{PSL2 cases}  either $r|q$ or $H =\PSL(2,q),  q \le 7$.
\end{enumerate}
\end{lemma}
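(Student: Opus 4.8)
The plan is to analyze the representation theory of $H = \PSL(2,q)$ or $\PGL(2,q)$ restricted to the subgroup $J \cong A_4$, using the constraint that the Klein four group $K = O_2(J)$ has no nonzero fixed points on $V$. First I would recall the character theory of $A_4$: over an algebraically closed field of characteristic $r \neq 2$ (and $r \neq 3$, with the $r = 3$ case handled separately since $3 \mid |A_4|$), the irreducible $A_4$-modules are the trivial module, two other one-dimensional modules on which $K$ acts trivially, and a single three-dimensional module on which $K$ acts without fixed points (the nontrivial $K$-isotypic components). Since the trivial and the two linear characters of $A_4$ all restrict trivially to $K$, the no-fixed-point hypothesis forces $V|_J$ to be a direct sum of copies of the three-dimensional $A_4$-module. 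On that three-dimensional module, each of the three involutions in $K$ acts with eigenvalues $(1,-1,-1)$ and hence has trace $-1$, which already gives part (\ref{PSL2 involution trace}) once I know the decomposition, at least for the involutions lying in $K$; the involutions in $J$ all lie in $K$, so this suffices.

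The crux is then to show $\dim V = 3$ exactly, ruling out higher multiples of the three-dimensional $A_4$-module. Here I would use the known character table and the list of irreducible representations of $\PSL(2,q)$ and $\PGL(2,q)$ in the nondefining (cross-characteristic) case, together with the Brauer characters when $r \mid |H|$ but $r \nmid q$. The standard description gives irreducible degrees essentially among $1$, $q-1$, $q$, $q+1$, and $(q\pm 1)/2$; I would compute the restriction of the Brauer character of $V$ to the elements of $K$, i.e. to a noncentral involution $t$ of $H$. For $V|_J$ to be $m$ copies of the three-dimensional module, the trace of $t$ on $V$ must equal $-m$, while $\dim V = 3m$. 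Matching $\chi_V(t) = -m$ and $\chi_V(1) = 3m$ against the actual values of the irreducible (Brauer) characters of $H$ on involutions pins down which $V$ can occur. The main obstacle I anticipate is the bookkeeping of involution character values across the four families of irreducibles and across the modular cases: I would need the fact that an involution in $\PSL(2,q)$ acts on the $q+1$ and $q-1$ dimensional principal/discrete series representations with small trace (typically $\pm 1$ or related to $q \bmod 4$), so that the ratio $|\chi_V(t)/\chi_V(1)| = 1/3$ can hold only in the degree-three situation.

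Finally I would extract part (\ref{PSL2 cases}). The group $\PSL(2,q)$ in cross characteristic $r \nmid q$ does not generally possess a three-dimensional absolutely irreducible module: such a module exists essentially only when $r \mid q$ (the natural adjoint/symmetric-square type construction in the defining characteristic) or in sporadically small cases. I would invoke the classification of low-dimensional representations of $\PSL(2,q)$, noting that a faithful three-dimensional absolutely irreducible representation in characteristic coprime to $q$ forces $|H|$ to be small, which corresponds exactly to $H = \PSL(2,q)$ with $q \le 7$ (the exceptional isomorphisms $\PSL(2,5) \cong A_5$, $\PSL(2,7) \cong \PSL(3,2)$ account for the genuine three-dimensional modules, for instance $A_5$ acting on its standard three-dimensional real representations, or $\PSL(3,2)$ on a natural module). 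The conclusion is that either $r \mid q$, placing us in defining characteristic, or else $H = \PSL(2,q)$ with $q \le 7$. I expect the delicate part here to be confirming that $\PGL(2,q)$ admits no such three-dimensional module outside defining characteristic (its three-dimensional representations, when they exist, come from $\PSL(2,q)$ and the relevant ones do not extend), and that the small cases are precisely $q \in \{5,7\}$ with $H = \PSL(2,q)$ rather than $\PGL(2,q)$.
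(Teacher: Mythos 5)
Your first step---showing that the no-fixed-point hypothesis forces $V|_J$ to be a direct sum of copies of the three-dimensional $A_4$-module $M$, hence $\chi_V(x)=-\dim V/3$ for every involution $x\in K$---is exactly the paper's opening move (the paper handles your deferred $r=3$ case by noting that $M$ is projective and injective over $FA_4$, so it splits off and any complement would contribute $K$-fixed vectors through its socle). Your cross-characteristic analysis, matching $\chi_V(1)=3m$ and $\chi_V(x)=-m$ against the known degrees and involution values and concluding $\dim V=3$, $q\le 7$, $H=\PSL(2,q)$ from the minimal-degree bounds $(q-1)/2$ and $q-1$, is also essentially the paper's argument, which cites Burkhardt for the fact that all $r\nmid q$ Brauer characters lift to characteristic zero.

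There is, however, a genuine gap: you never prove parts (\ref{PSL2 dim3}) and (\ref{PSL2 involution trace}) when $r\mid q$. Your dimension argument explicitly relies on ``the list of irreducible representations \dots\ in the nondefining (cross-characteristic) case,'' and the degree list $1,\ (q\pm1)/2,\ q-1,\ q,\ q+1$ is false in defining characteristic: for $q=r^f$, Steinberg's tensor product theorem gives irreducible $\PSL(2,q)$-modules of dimension $\prod_i(d_i+1)$ with $0\le d_i<r$, many of which exceed $3$. Treating ``$r\mid q$'' as an allowed outcome of part (\ref{PSL2 cases}) does not excuse you from proving $\dim V=3$ there; a priori a higher-dimensional restricted or twisted tensor product module could restrict to $J$ as $M^{\oplus m}$ with $m>1$. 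The paper closes this case by observing that $V|_J\cong M^{\oplus m}$ forces $V$ to be tensor indecomposable, so by Steinberg's theorem $V$ is a Frobenius twist of a restricted module $L(d)$ with $\dim V=d+1$ and $d$ even, and then the fact that an involution of $\PSL(2,q)$ has trace in $\{-1,0,1\}$ on $L(d)$ is compatible with $\chi_V(x)=-\dim V/3$ only when $d=2$. Some argument of this kind, ruling out the non-restricted and higher-weight modules in characteristic $r\mid q$, is needed to complete your proof.
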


\begin{proof} Without loss of generality, we can base change to the algebraic
closure of $F$ to be able to assume that $F$ is algebraically closed.
 Let $\chi$ denote the Brauer character
of $H$ on $V$.  For all $r$, the three-dimensional irreducible representation
$M$ of $A_4$ over $F$ is projective and injective as a module for $FH$.
If $V$ is not the direct sum of copies
of $M$ (as an $A_4$-module), then the socle of $V$ contains a one-dimensional character of $J$, and
this character is trivial on $K$.  Since $K$ acts fixed point freely on
$V$,  this is contradiction.  So $V$ is a direct sum of copies of $M$, and  $\chi(x)  = -\dim V/3$
for each $x \in J$ of order $2$, because the restriction of $M$ to $K$ is the sum of the
three non-trivial one-dimensional characters of $M$.

By considering the structure of $V$ as an $A_4$-module, we see that $V$ is tensor
indecomposable.
 
First assume that $r|q$.   We view $V$ as a module for
$\SL(2,q)$.   By Steinberg's tensor product theorem
and the fact that $V$ is tensor indecomposable, we see that $V$
is a Frobenius twist of a module $L(d)$, where $L(d)$
can be identified with the space of  homogenous  polynomials in two variables of
degree $d$ for $0 \le d < r$.  So $\dim V = d +1$.  Since the central involution in $\SL(2,q)$ acts trivially,
$d$ is even.   It is easy to compute that the character of an involution in $\PSL(2,q)$ on
$L(d)$ lies in $\{-1,0,1\}$.  However, we know that $\chi(x) = -\dim V/3$ for at least one involution
$x$, namely one lying in $K$.  Therefore $\chi(x) \in \{-1,0,1\}$
implies that $d=2$, $V$ is $3$-dimensional and $\chi(x) = -1$.

Now suppose that $r$ does not divide $q$.  The modular
representations of $H$ are well known  \cite{burkhardt},
and indeed they are all  reductions of
characteristic zero  representations.  It follows
that $\chi(x)=-\dim V/3$ if and only if $\dim V =3$.
Since the smallest non-trivial irreducible representation of $\PSL(2,q)$
has dimension $(q-1)/2$, it follows that $q=5$ or $7$.   The smallest
non-trivial irreducible representation of $\PGL(2,q)$ is $q-1$, whence
$H = \PSL(2,q)$.
\end{proof}

\begin{remark} If $H=\PSL(2,q)$ and $V$ is an irreducible
module of dimension $3$,  then $V$ satisfies the hypotheses
of the previous lemma.
If $G=\PGL(2,q)$,  each $3$-dimensional module for $\PSL(2,q) $
such that $r|q$ has two extensions to $\PGL(2,q)$, and one
of those extensions will satisfy the hypotheses of the previous
lemma.
\end{remark}

\begin{corollary} \label{action}  Let $H=\PSL(2,q)$ with $q$ odd and
$q > 4$.  Let $R$ be a finite abelian group on which $H$ acts.  Suppose there
is a subgroup $J$ of $H$ isomorphic to $A_4$ whose Klein four subgroup $K$
acts fixed point freely  on the non-identity elements of $R$.
If $a \in R$ is non-trivial, then $K$ has a regular orbit on $Ha$.
\end{corollary}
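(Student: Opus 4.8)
The plan is to translate ``$K$ has a regular orbit on $Ha$'' into a statement about the eigenspace decomposition of $R$ under $K$, reduce to a single irreducible $H$-composition factor, and then win by a fixed-point count using Lemma~\ref{modules for PSL(2)}.

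First I would reformulate. The hypothesis on $J$ gives $C_R(K)=1$, and since $|R|$ is odd while $K=\{1,x,y,z\}$ is a Klein four group, the coprime action of $K$ splits $R$ according to the characters of $K$; the trivial component is $C_R(K)=1$, so $R=C_R(x)\oplus C_R(y)\oplus C_R(z)$, where $C_R(x)$ is the sum of the eigenspaces on which $x$ acts trivially, and similarly for $y,z$. A non-identity element of $R$ has trivial $K$-stabilizer (equivalently, lies in a regular $K$-orbit) exactly when it lies in none of $C_R(x),C_R(y),C_R(z)$. As these three subgroups meet pairwise only in the identity, which is not in $Ha$, the intersections $Ha\cap C_R(x)$, $Ha\cap C_R(y)$, $Ha\cap C_R(z)$ are disjoint, and the goal becomes exactly to show $Ha\not\subseteq C_R(x)\cup C_R(y)\cup C_R(z)$.

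Next I would reduce to an irreducible module. Because the action of $K$ is coprime to $|R|$, taking $K$-fixed points is exact on short exact sequences of $K$-submodules, so $|C_R(K)|=\prod_W|C_W(K)|$ over the $H$-composition factors $W$ of $R$; since $C_R(K)=1$ this forces $C_W(K)=1$ for every $W$. Fixing a composition series, let $W=R_i/R_{i-1}$ be a factor in which the image $\bar a$ of $a$ is non-zero, which exists as $a\ne 1$. A regular $K$-orbit on $H\bar a\subseteq W$ lifts, since the quotient map is $K$-equivariant: if $\overline{ha}$ has trivial $K$-stabilizer, then so does $ha\in Ha$. So it suffices to find a regular $K$-orbit on $H\bar a$ in the irreducible $\F_r H$-module $W$, where $r$ is the relevant prime. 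Now $C_W(K)=1$ and $K\le J\cong A_4$, so Lemma~\ref{modules for PSL(2)} applies (after extending scalars to split $W$): $W$ is three-dimensional over its endomorphism field, each involution of $K$ has trace $-1$, and hence the three eigenspaces $W_1=C_W(x)$, $W_2=C_W(y)$, $W_3=C_W(z)$ are cyclically permuted by the order-$3$ element of $J$.

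Finally I would close by counting. Writing $n=|H\bar a|$ and $f_i=|H\bar a\cap W_i|$, and noting $H\bar a$ contains no $K$-fixed point, a count of $K$-orbits by their stabilizers shows that the number of regular orbits is $\tfrac14(n-f_1-f_2-f_3)$. Since the order-$3$ element of $J$ permutes the $W_i$, they have equal size $|W|^{1/3}$, so $f_i\le|W|^{1/3}-1$, and it is enough to prove $n>3|W|^{1/3}$. I would bound $n=[H:C_H(\bar a)]$ from below using that $W$ is faithful with small vector-stabilizers: in the principal case $r\mid q$, where $W$ is the adjoint module, a non-zero $\bar a$ is regular, $|C_H(\bar a)|\le q+1$, whence $n\ge q(q-1)/2$, which exceeds $3|W|^{1/3}=3q$ once $q\ge 9$. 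The finitely many remaining possibilities, namely $q\in\{5,7\}$ (including the non-defining-characteristic cases isolated in Lemma~\ref{modules for PSL(2)}), I would settle by direct inspection of the explicitly known three-dimensional modules. The main obstacle is precisely this last step: obtaining the orbit-size lower bound uniformly and disposing of the small $q$, since for small $q$ the orbit can have size comparable to $3|W|^{1/3}$, so the clean counting inequality need not hold outright.
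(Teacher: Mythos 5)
Your overall architecture matches the paper's: reduce to an (absolutely) irreducible $3$-dimensional module via Lemma~\ref{modules for PSL(2)}, note that a point of $H\bar a$ fails to lie in a regular $K$-orbit exactly when it is fixed by one of the three involutions of $K$, and then count. The reductions themselves are fine (coprimality gives $C_W(K)=1$ on every composition factor, and regular orbits do lift through $K$-equivariant quotients; the paper instead passes to an irreducible quotient of the submodule generated by $a$, which is equivalent). The orbit-count identity $\tfrac14(n-f_1-f_2-f_3)$ is also correct, since no nonzero vector is fixed by two distinct involutions of $K$.

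The genuine gap is in how you bound the $f_i$. You use only $f_i\le |C_W(x_i)|-1=|E|-1$, which forces you to prove $n>3(|E|-1)$; this fails or is borderline exactly in the cases that carry most of the content: defining characteristic with $q=5$ (minimal orbit size $(q^2-1)/2=12=3(q-1)$), and both cross-characteristic cases $q\in\{5,7\}$, which you defer to ``direct inspection'' and explicitly flag as the unresolved obstacle. The paper's key idea, which you are missing, is to compute $f_i$ exactly rather than bound it: if $x$ fixes $\bar a$ then the span $W$ of $\bar a$ is the $1$-dimensional fixed space of $x$, its stabilizer is shown to be $C_H(x)=N_H(T)$ (for $q=7$ in cross characteristic this requires a separate argument identifying the stabilizer with a Borel subgroup of $\PSL(3,2)$), and self-duality of the module plus the invariant quadratic form pins down $H\bar a\cap W=\{\pm\bar a\}$, so each involution of $K$ fixes exactly $2$ points of $H\bar a$. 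Then $n>6$ suffices, which holds uniformly for all $q>4$. Also, two smaller points in your generic case deserve care: the claim $3|W|^{1/3}=3q$ presumes the endomorphism field is $\F_q$, and ``every nonzero $\bar a$ is regular'' needs the identification of $W$ with a Frobenius twist of $\mathrm{Sym}^2$ of the standard module (so that nilpotent vectors still have centralizer of order at most $q$). As written, the proposal does not constitute a complete proof; the missing cases are precisely where the paper does its real work.
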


\begin{proof}    Since $K$ acts fixed point freely on the non-identity elements of $R$, $R$ has odd order.
There is no harm in assuming that $R$ is generated as
a $\mathbb{Z}H$-module by $a$.   We can also pass to a quotient and
assume that $R$ is an irreducible $H$-module.   Let $E=\mathrm{End}_H(R)$.
Thus, $E$ is a field of characteristic $r > 2$, and we may view  $R$ as an absolutely irreducible
$EH$-module.

By Lemma~\ref{modules for PSL(2)},  $R$ is $3$-dimensional.  Let $L$ be the stabilizer of $a$ in $K$.
Since $K$ acts fixed point freely, $L \ne K$.  If $L=1$, the result follows.  So we may
assume that $|L|=2$.   Let $1 \ne x \in L$.   By part (\ref{PSL2 involution trace}) of Lemma \ref{modules for PSL(2)},
$x$ has a $1$-dimensional fixed space
on $R$.  This space must be the span of $a$ since $L$ stabilizes $a$.   So the stabilizer of the line $W$ generated by $a$
contains $C_H(x)$.  The subalgebra $A(x)$ of $\mathrm{Mat}_2(\mathbb{F}_q)$ which is generated 
over $\mathbb{F}_q$ by
an inverse image of $x$ in $\mathrm{SL}(2,q)$ is commutative and semi-simple of
dimension $2$ over $\mathbb{F}_q$.  Each inverse image in $\mathrm{SL}(2,q)$
of an element of $C_H(x)$ must conjugate $A(x)$ back to itself.  By considering the possible actions
of such elements on $A(x)$, one sees that $C_H(x)$  is the normalizer $N_H(T)$
of the maximal torus $T$ in $H$ which contains $x$, and $|N_H(T)| =  q \pm 1$.

If $r$ divides $q$, then (up to a Frobenius twist which does not affect
the result) $R$ is the representation of $H$ which results
from the action of $\SL(2,q)$ on quadratic polynomials, and $x \in H$ lifts
to an element $\tilde{x} \in \SL(2,q)$ having eigenvalues $\pm \sqrt{-1}$.

The representation $R$ of $H$ over $E$ is self-dual, so there is a non-degenerate $H$-invariant
quadratic form on $R$.   We can decompose $R$ into the direct sum of the eigenspaces of $x$,
and eigenspaces with different eigenvalues are orthogonal and stabilized by the action of $H$.
In particular, $W$ and its orthogonal complement are non-degenerate for the quadratic form.
The stabilizer in $H$ of the line $W$ through $a$ leaves invariant
the orthogonal complement of $W$.   Since $x$ acts as $-1$ on this complement, we see that
the stabilizer of the line containing $a$ is precisely $C_H(x) = N_H(T)$.  By considering
the cases in which $T$ is split or not split, and using the above description of the three
dimensional representation $R$, one sees that $T$ is the stabilizer of $a$, where $T$
has index $2$ in $C_H(x) = N_H(T)$.    Note that $Ha \cap W =\{ \pm a \}$ (since they are the only two
vectors in $W$ with the same norm with respect to the $H$-invariant quadratic form).   Since
every non-trivial element of $K$ is a conjugate of $x$ by an element of $J$, it follows that any non-trivial element of $K$ fixes exactly $2$ points
in $Ha$.  Therefore, as long as $|Ha| = [H:T] \ge q(q-1) > 6$, there will be $b \in Ha$ not fixed by any non-trivial element
of $K$, as required. 

We now suppose that $r$ does not divide $q$.  So $q=5$ or $7$ by Lemma \ref{modules for PSL(2)}.  If $q=5$, then $\PSL(2,5) \cong A_5$. All 
irreducible representations of $A_5$ are self dual (in any characteristic) and so precisely the same
argument as above applies.

Finally, assume that $q=7$.  Note that $\PSL(2,7) \cong \PSL(3,2)$ and that $S:=N_H(T)$ is a Sylow $2$-subgroup
of $H$ (of order $8$).  We claim that $S$ is precisely the stabilizer of $W$.  Since $S$ is a Borel
subgroup of $H$ (considered as $\PSL(3,2)$), the only proper overgroups of $S$ are the two parabolic subgroups
containing $S$ each isomorphic to $S_4$.  If $S_4$ preserves $W$, then its derived subgroup $A_4$ acts trivially
on $W$, a contradiction to $K$ having no fixed points.   So $S$ is the stabilizer of $W$ and contains the stabilizer of
$a$.  Let $S^h$ be the opposite Borel subgroup to $S$, so that $S^h$ is a conjugate of $S$.  We have $K \subset C_H(x) = S$.  Thus,  $K \cap S^h \le S \cap S^h =1$ and so $K$ is disjoint from
some stabilizer on the set $Ha$, whence $K$ has a regular orbit on $Ha$.
\end{proof}

\begin{corollary} \label{nonsolv2} Let $G$ be a non-solvable $\Oo$-group.
Let $R=O(G)$.  Then:
\begin{enumerate}
\item \label{nonsolv2 nilp} $R$ is nilpotent.
\item \label{nonsolv2 quotient}   $G/R$ is either
$\PSL(2,q)$ or $\PGL(2,q)$, with $q$ odd and $q > 4$.
\item \label{nonsolv2 chief}  Every chief factor $X = H/N$ of $G$ for which $H$ is contained in $R$
has the following properties.  The action of $R$ on $X$ is trivial, and  $X$ is an irreducible
$3$-dimensional $G/R$-module
(over the endomorphism ring of $X$ as $G/R$-module)
satisfying the conditions on $V$ in Lemma~\ref{modules for PSL(2)}.
 There are no fixed points for the action of $K$ on $X$.
\item  \label{nonsolv2 rgroup} If $q > 7$ or $G/R =\PGL(2,q)$,  then $R$ is an $r$-group where $r|q$.
\end{enumerate}
\end{corollary}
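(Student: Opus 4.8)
The plan is to reduce to the situation studied in Lemmas~\ref{restrictions trivial center} and~\ref{nonsolv}, identify $G/R$, and then control the $G/R$-action on the chief factors of $R$ via Lemma~\ref{modules for PSL(2)} and Corollary~\ref{action}. First I would record the basic reductions. Since $R=O(G)$ is solvable by Feit--Thompson and $G$ is non-solvable, the Sylow $2$-subgroup $P$ cannot be cyclic, for otherwise $G=RP$ by Theorem~\ref{Oort 2 Sylow}(\ref{Oort 2 cyclic}) and $G$ would be solvable; hence $P$ is dihedral. If $Z(G)\ne 1$ then $G$ would be solvable by Lemma~\ref{nontrivial center lemma}, so $Z(G)=1$; and if $A_4\not\le G$ then $G=RP$ would again be solvable by Lemma~\ref{restrictions trivial center}(\ref{no A4}). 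Thus $A_4\le G$, and $G$ satisfies the hypotheses of Lemma~\ref{restrictions trivial center}.

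For part~(\ref{nonsolv2 quotient}) I would pass to $\bar G=G/R$. By Lemma~\ref{lem:sections} it is again an $\Oo$-group, and $O(\bar G)=1$ since $O_{2'}(G/O_{2'}(G))=1$. As above $\bar G$ is non-solvable with dihedral Sylow $2$-subgroup, so $Z(\bar G)=1$ and $A_4\le\bar G$. Now Lemma~\ref{nonsolv} applies to $\bar G$ (with its hypothesis $R=1$ supplied by $O(\bar G)=1$) and gives $\bar G\in\{A_4,S_4,\PSL(2,q),\PGL(2,q)\}$; since $A_4$ and $S_4$ are solvable, non-solvability forces $\bar G\cong\PSL(2,q)$ or $\PGL(2,q)$ with $q$ odd and $q>4$, proving part~(\ref{nonsolv2 quotient}).

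The heart of the argument is part~(\ref{nonsolv2 chief}). Let $X=H/N$ be a $G$-chief factor with $H\le R$. Since $R$ is solvable, $X$ is elementary abelian of some prime exponent $r$, and $r$ is odd because $H\le R=O(G)$ has odd order. The subgroup $A_4\le G$ meets $R$ trivially (its only odd-order normal subgroup is trivial), so it maps isomorphically into $\bar G$; let $K=O_2(A_4)\le G$ be its Klein four subgroup. By Theorem~\ref{Oort 2 Sylow}(\ref{Oort 2 dihedral}) we have $C_G(K)=K$, hence $C_R(K)=1$, and the coprime fixed-point formula $C_{H/N}(K)=C_H(K)N/N$ (using that $|K|=4$ is coprime to the odd number $|H|$) yields $C_X(K)=1$. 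The crux is to show that $R$ acts trivially on $X$: the group $[R,H]$ is normal in $G$ and contained in $H$, so by the chief-factor property either $[R,X]=0$, the desired conclusion, or $[R,X]=X$. To exclude the latter I would apply Clifford theory to the normal solvable image $RC_G(X)/C_G(X)$ inside $G/C_G(X)\le\GL(X)$, using the fixed-point-free action of $K$ together with the dimension and trace restrictions of Lemma~\ref{modules for PSL(2)} and the regular-orbit analysis of Corollary~\ref{action}. Once $R$ centralizes $X$, the latter is an irreducible $\bar G$-module on which $K$ acts without fixed points; taking $E=\mathrm{End}_{\bar G}(X)$ and applying Lemma~\ref{modules for PSL(2)} with $V=X$, $F=E$, and $J$ the image of $A_4$ shows $X$ is $3$-dimensional over $E$ and satisfies the stated conditions.

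Parts~(\ref{nonsolv2 nilp}) and~(\ref{nonsolv2 rgroup}) then follow formally. For~(\ref{nonsolv2 nilp}): refining a $G$-chief series through $R$, part~(\ref{nonsolv2 chief}) shows that $R$ centralizes every factor, so this series is a central series for $R$ and $R$ is nilpotent. For~(\ref{nonsolv2 rgroup}): write $R=\prod_\ell R_\ell$ as the product of its Sylow subgroups, each $R_\ell$ being characteristic in $R$ and hence normal in $G$. If $R_\ell\ne 1$, it contains a $G$-chief factor of exponent $\ell$, which by part~(\ref{nonsolv2 chief}) and Lemma~\ref{modules for PSL(2)}(\ref{PSL2 cases}) forces $\ell\mid q$ unless $\bar G=\PSL(2,q)$ with $q\le 7$; under the hypothesis $q>7$ or $\bar G=\PGL(2,q)$ this exceptional case is excluded, so $\ell$ is the prime dividing $q$. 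Hence $R_\ell=1$ for every other prime, and $R$ is an $r$-group with $r\mid q$. I expect the main obstacle to be the vanishing $[R,X]=0$ in part~(\ref{nonsolv2 chief}): ruling out a non-trivial normal solvable subgroup of $G/C_G(X)$ acting on the irreducible module $X$ is exactly where the interplay between Clifford theory, the fixed-point-free four-group $K$, and the $3$-dimensionality forced by Lemma~\ref{modules for PSL(2)} must be combined carefully.
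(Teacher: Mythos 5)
Your reductions at the start, and your treatment of parts~(\ref{nonsolv2 quotient}) and~(\ref{nonsolv2 rgroup}), match the paper. The gap is in the logical core. You propose to prove part~(\ref{nonsolv2 chief}) first --- in particular that $R$ acts trivially on every $G$-chief factor $X=H/N$ inside $R$ --- and then deduce part~(\ref{nonsolv2 nilp}) by refining a chief series (that deduction would indeed be valid). But the step $[R,X]=0$ is exactly where you stop and gesture at ``Clifford theory together with Lemma~\ref{modules for PSL(2)} and Corollary~\ref{action},'' and the obstruction there is real: Corollary~\ref{action} requires an action of $\PSL(2,q)$, i.e.\ of $G/R$, on the character group of the relevant normal abelian subgroup, and such an action only exists once you know that $R$ acts on that subgroup trivially, or at least through an abelian quotient --- which is essentially the statement you are trying to prove. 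For a general chief factor $H/N$ the action of $R$ need not factor through $R/[R,R]$, so the eigenspace/regular-orbit argument does not even launch. You flag this as ``the main obstacle,'' and it remains unresolved in the proposal.

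The paper resolves it by reversing the order of (\ref{nonsolv2 nilp}) and (\ref{nonsolv2 chief}). It first gets $M=[R,R]$ nilpotent from Bauman's theorem (part~(\ref{nilp commutator}) of Lemma~\ref{restrictions trivial center}, using $C_G(K)=K$ and $Z(G)=1$), then proves $R$ nilpotent by induction on $|R|$, choosing a minimal normal subgroup $A$ of $G$ inside $Z(M)$: since $M$ centralizes $A$, the action of $R$ on $A$ factors through the abelian group $R/[R,R]$, hence is semisimple, and $G/R$ genuinely acts on $R^{\#}=\mathrm{Hom}(R,k^\times)$. Corollary~\ref{action} then produces a regular $K$-orbit on the $R$-eigenspaces of $V=A\otimes_E k$, hence a free $k[K]$-submodule of $V$, contradicting the fixed-point-freeness of $K$; so $R$ centralizes $A$ and nilpotency follows from that of $R/A$. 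With $R$ nilpotent in hand, the triviality of the $R$-action on a chief factor $H/N$ is immediate, since otherwise $[H,R]N$ would be a normal subgroup of $G$ strictly between $N$ and $H$ (properness uses nilpotency of $R$); parts~(\ref{nonsolv2 chief}) and~(\ref{nonsolv2 rgroup}) then follow from Lemma~\ref{modules for PSL(2)} as you say. Your architecture can only be completed by importing essentially this inductive argument; as written, the proposal assumes its hardest step.
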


\begin{proof}  By Theorem \ref{Oort 2 Sylow}, the $2$-Sylow of $G/R$ is not
cyclic because $G/R$ is not solvable.  Now we apply Lemma \ref{nontrivial center lemma}
to conclude that $Z(G/R)$ is trivial because $G/R$ is not solvable.  Hence
$G/R$ satisfies the hypotheses of Lemma \ref{restrictions trivial center}.
We may therefore apply Lemma \ref{nonsolv} to $G/R$.  It is impossible
that $G/R$ is isomorphic to $A_4$ or $S_4$ since $G$ is not solvable. Hence
Lemma \ref{nonsolv} shows 
$G/R \cong \PSL(2,q) $ or $\PGL(2,q)$
with $q$ odd and $q > 4$.   The proves part (\ref{nonsolv2 quotient}) of Corollary \ref{nonsolv2}.

We now focus on proving part (\ref{nonsolv2 nilp}) of the corollary.
By Lemma 
\ref{restrictions trivial center} applied to $G/R$, there is an $A_4$ subgroup
$J$ of $G/R$.  Since $R$ is normal of odd order, there is an elementary abelian
subgroup $K$  of $G$ of order $4$ such that $KR/R$ is contained in $J$.  
By case (\ref{Oort 2 dihedral}) of Theorem \ref{Oort 2 Sylow}, $K$ acts fixed
point freely on $R$.  Since $G/R$ has trivial center, the center of $G$
is contained in $R$; but no non-trivial element of $R$ is fixed by $K$.
Hence $G$ has trivial center, so by applying part (\ref{nilp commutator}) of Lemma \ref{restrictions trivial center}
to $G$ we conclude that $M:=[R,R]$ is nilpotent.  If $M$ is trivial then $R$ is abelian
and hence (\ref{nonsolv2 nilp}) of the corollary holds.  Therefore we now assume $M$ is 
non-trivial, so that $Z(M)$ is non-trivial because $M$ is nilpotent.

Since $Z(M)$ is a non-trivial normal subgroup of $G$, there exists 
a minimal non-trivial normal subgroup $A$ of $G$ 
contained in $Z(M)$.  Then $G/A$ is an $\Oo$-group by Lemma \ref{lem:sections}. To
prove part (\ref{nonsolv2 nilp}) of the corollary, 
we may assume by induction
that $R/A$ is nilpotent.  Since $M$ centralizes $A$,  the action of $R$ on $A$ factors
through $R/[R,R]$, so $R$ acts on
$A$ as an abelian
group.   If $R$ centralizes $A$, then we see that $R$ is nilpotent by considering the ascending 
central series of $R$ together with the fact that $R/A$ is nilpotent.  Thus to prove that
(\ref{nonsolv2 nilp}) holds, it will suffice to derive a contradiction from the assumption that $R$ acts non-trivially on $A$.

View $A$ as $G$-module over the field $E$,  the $G$-endomorphism ring of $A$.
So $A$ is an absolutely irreducible $E[G]$-module with $K$ acting fixed
point freely on $A$.  We know that $R$ acts as abelian group on $A$, and
we are supposing this action is non-trivial.  Let $\ell$ be the characteristic of $E$.
The $\ell$-Sylow subgroup of $R/[R,R]$ fixes a non-trivial $E[G]$-submodule of
$A$.  So since $A$ is absolutely irreducible as an $E[G]$-module, it follows 
that the action of $R/[R,R]$ on $A$ factors through the maximal prime-to-$\ell$
quotient of $R/[R,R]$.  In particular, this action is semi-simple. 

Let $V = A \otimes_E k$ where $k$ is the algebraic closure of $E$.
Then $V$ is an irreducible $kG$-module.  Since $K$ acts fixed point freely
on $R$, the $E[K]$-module $A$ contains no non-trivial $K$-invariants, so the same
is true for the $k[K]$-module $V$.  Hence $K$ acts fixed point freely on
$V$, and we have shown that $R$ acts semi-simply as a non-trivial abelian group.     So $V$ is the direct
sum of $R$-eigenspaces on $V$ and $G$ permutes these eigenspaces
transitively.   Since the action of $R$ is non-trivial, there must be an eigenspace
associated to a non-trivial character $\lambda$ of the abelian group $R^\#:=\mathrm{Hom}(R,k^\times)$.  (Note that in fact, $R$ has no fixed points on $V$, since that would be an
invariant subspace).   Consider the action of $G$ on $R^\#$.    Of course,
$R \subset G$ acts trivially on this group and so we may view $\PSL(2,q) \subset G/R$ acting on $R^\#$.
We have assumed that $K$ acts fixed point freely on the non-trivial element of $R$.  Since $R$ is solvable
of order prime to $|K|$, the action of $K$ on the non-trivial elements of $R^\#$ is also fixed point free.  By
Corollary \ref{action},  $K$ has a regular orbit on $\PSL(2,q) \lambda$ and so on the
$R$-eigenspaces on $V$.  Thus $V$ contains a free $k[K]$-submodule, and so
$K$ has fixed points on $V$ which is a contradiction.   This shows
that $R$ centralizes $A$ and so $R$ is nilpotent and 
we have proved part (\ref{nonsolv2 nilp}) of the corollary.

Let $X = H/N$ be a chief factor of $G$ for which $H$ is contained in $R$.
Then $R$ acts trivially on $X$, since otherwise $[H,R]N$ would be
a normal subgroup of $G$ properly between $N$ and $H$.  Thus
$X$ is a $G/R$-module which must be irreducible.  We know that
$H \subset R$,
$K$ acts fixed point freely on $R$, $R$ is nilpotent and that $H$ is normal
in $G$.  It follows that $X$ can have no fixed points under the action of
$K$, since these would lift to fixed points for $K$ acting on $H$.
Now the hypothesis of Lemma  \ref{modules for PSL(2)} are satisfied
for the $G/R$-module $V = X$ when we let $F$ be the $G/R$-endomorphism
ring of $X$.  The remaining assertions (\ref{nonsolv2 chief}) and (\ref{nonsolv2 rgroup}) now 
follow by Lemma~\ref{modules for PSL(2)}.
\end{proof}

Using the above results, we can now prove Theorem \ref{main2}, completing the classification 
of $\Oo$-groups for the prime $2$.

\begin{proof}[Proof of Theorem \ref{main2}]
Let $G$ be as in the theorem.  Then
$[R,R]$ is nilpotent  by Lemmas \ref{nontrivial center lemma} and \ref {restrictions trivial center},
and these lemmas also show that $G = RP$ if $G$ has no subgroup isomorphic to $A_4$.
Every elementary abelian subgroup of order $4$ acts fixed point freely on 
the non-trivial elements of $R$  by case (\ref{Oort 2 dihedral}) of Theorem \ref{Oort 2 Sylow}.

For the remainder of the proof, we assume that $G$ has a subgroup isomorphic to $A_4$,
and we will show that either (\ref{A4 complement main2}), (\ref{S4 complement main2}), 
or (\ref{nonsolv main2}) of the theorem holds.  
Since $G$ is an $\Oo$-group, so is $G/R$, by Lemma~\ref{lem:sections}.  Lemma \ref{nonsolv} shows  $G/R$ is one of $A_4,  S_4$,  $\PSL(2,q)$,  or
$\PGL(2,q)$ with $q > 4$ and $q$ odd.  For the last two of those groups,
case (\ref{nonsolv main2}) of the theorem holds by Corollary \ref{nonsolv2}.  So we now assume that
$G/R \cong A_4$ or $S_4$, and show that (\ref{A4 complement main2}) or (\ref{S4 complement main2}) respectively holds.  

We now show  that $G$ has a complement to
$R$ of the form $N_G(K)$ for some Klein four subgroup $K$.  
First, since $G/R$ contains a normal elementary abelian subgroup of order $4$,
and since the order of $R$ is odd, Schur-Zassenhaus implies that 
$G$ contains an elementary abelian subgroup $K$ of order $4$
that is normal modulo $R$.  
It acts fixed point freely on 
the non-identity elements of $R$.  
If $g \in G$, then $RK^{g^{-1}}=RK$ by normality modulo $R$, and so $K$ and $K^{g^{-1}}$ are
Sylow $2$ subgroups of the group $RK$.  By Sylow's theorem, $K^{g^{-1}r} = K$
for some $r \in R$.  Hence $K=K^{r^{-1}g}$ and so $r^{-1}g \in N_G(K)$.  Thus 
$g = r(r^{-1}g) \in RN_G(K)$, proving $G=RN_G(K)$.
So to prove that $N_G(K)$ is a complement to $R$ in $G$, 
it suffices to show that if $ r \in N_R(K) = N_G(K) \cap R$ then $r$ 
is trivial.  Suppose $r$ is not trivial.  
Then $r$ and $K$
must generate a cyclic-by-$2$ $\Oo$-group $K\langle r\rangle$ which is not a $2$-group.  This forces $K\langle r\rangle$
to be isomorphic to $A_4$, so $r \in R$ has order $3$ and acts as a 
cyclic permutation of the non-trivial elements of $K$.  By $G = RN_G(K)$
and the fact that $G/R$ contains a subgroup isomorphic to $A_4$, we can
find an element $s \in N_G(K)$ whose image in $G/R$ has order $3$ and whose action on $K$
is the same as that of $r$.  Then $s r^{-1}$ is an element of $C_G(K)$ with
image of order $3$ in $G/R$, and this is impossible since $C_G(K) = K$
by case (\ref{Oort 2 dihedral}) of Theorem \ref{Oort 2 Sylow}.  This contradiction shows 
that in fact $N_R(K) = N_G(K) \cap R$
is trivial, so that $N_G(K)$ is a complement to $R$, proving the claim.  
Here this complement is isomorphic to $A_4$ or $S_4$ respectively, by the assumption
on $G/R$.

Consider any $G$-chief factor $X = H/N$ of $R$, so that $H$ and $N$
are normal subgroups of $G$ that are contained in $R$. Then $X$ has the properties
listed in part (\ref{nonsolv2 chief}) of Corollary \ref{nonsolv2} as a module for $G/R$, where $G/R \cong A_4$ or $S_4$. It is an elementary exercise to show that 
$A_4$ and $S_4$ have precisely one irreducible module over
$\mathbb{Z}/r$  in which
every elementary subgroup of order $4$ acts without fixed points.
It has order $r^3$ and has the properties stated in parts (\ref{A4 complement main2}) and (\ref{S4 complement main2}) of 
Theorem \ref{main2}.  This completes the proof.
\end{proof}

\end{document}